\documentclass[12pt,reqno]{amsart}
\usepackage{amssymb,latexsym}
\usepackage{amsmath}
\usepackage{amsthm}
\usepackage{graphicx}
\usepackage{hyperref}
\usepackage{titletoc}
\usepackage{pdfsync}
\usepackage{color,xcolor}
\usepackage{amsthm,amsmath,amssymb}

\usepackage{mathrsfs}

\numberwithin{equation}{section}
\newtheorem{theorem}{Theorem}[section]
\newtheorem{proposition}[theorem]{Proposition}
\newtheorem{lemma}[theorem]{Lemma}

\usepackage{multirow}
\usepackage[font=bf,aboveskip=15pt]{caption}
\usepackage[toc,page]{appendix}

\textheight=9.0in
\textwidth=6.5in
\topmargin=-6mm
\oddsidemargin=0mm
\evensidemargin=0mm

\theoremstyle{definition}

\newtheorem{lem}[theorem]{Lemma}

\newcommand{\inn}{{\quad\hbox{in } }}

\def\R{{\mathfrak R}}

\def\begeq{\begin{equation}}
	\def\endeq{\end{equation}}

\def\R{\Bbb R}

\allowdisplaybreaks

\begin{document}
	
	\title[prescribed curvature problem of polyharmonic operators]
 {Double-tower solutions for higher order prescribed curvature problem}
	
	\author{Yuan Gao, Yuxia Guo AND Yichen Hu}

	\address{Yuan Gao,
		\newline\indent Department of Mathematical Sciences, Tsinghua University,
		\newline\indent Beijing 100084,  P. R. China.
	}
	\email{gaoy22@mails.tsinghua.edu.cn}
	
	\address{Yuxia Guo,
		\newline\indent Department of Mathematical Sciences, Tsinghua University,
		\newline\indent Beijing 100084,  P. R. China.
	}
	\email{yguo@tsinghua.edu.cn}

	\address{Yichen Hu,
		\newline\indent Department of Mathematical Sciences, Tsinghua University,
		\newline\indent Beijing 100084,  P. R. China.}
	\email{hu-yc19@mails.tsinghua.edu.cn}
\thanks{This work  is supported by NSFC (No. 12031015, 12271283).}
	\begin{abstract}
		We consider the following higher order prescribed  curvature
		problem  on  $ {\mathbb{S}}^N : $
		\begin{equation*}
			D^m \tilde u=\widetilde{K}(y) \tilde u^{m^{*}-1} \quad \mbox{on} \ {\mathbb {S}}^N, \qquad \tilde u >0 \quad \inn{\mathbb {S}}^N.
		\end{equation*}
		where $\widetilde{K}(y)>0$ is a  radial function, $m^{*}=\frac{2N}{N-2m}$ and $D^m$ is $2m$ order differential operator given by
  \begin{equation*}
      D^m=\prod_{i=1}^m\left(-\Delta_g+\frac{1}{4}(N-2i)(N+2i-2)\right),
  \end{equation*}
  where $g=g_{{\mathbb{S}}^N}$is the Riemannian metric. We prove the existence of infinitely many  double-tower type solutions, which  are invariant under some non-trivial sub-groups of $O(3),$ and their energy can be made arbitrarily large. 
		
		\vspace{2mm}
		
		{\textbf{Keyword:}  Prescribed curvature equations, Higher order operators,  Double-tower solutions}.
		
		\vspace{2mm}
		
		{\textbf{AMS Subject Classification:}
			35A01, 35G20, 35J91.}
	\end{abstract}
	\date{\today}

	\maketitle
	
	\section{Introduction} \label{sec1}
	We consider  the following  higher order prescribed curvature problem on  $\mathbb{S}^{N}:$
\begin{equation}\label{1+}
D^{m}\tilde u = \widetilde{K}(y)\tilde u^{m^{*}-1}, \quad \tilde u  > 0, \hbox{ in }  \mathbb{S}^{N},
\end{equation}
where $\widetilde{K}(y)$ is a positive and rotationally symmetric function, $m^{*}=\frac{2N}{N-2m}$ with $m\geq 1$ being a integer and $D^m$ is $2m$ order differential operator given by
$$
D^m=\prod_{i=1}^m \left(-\Delta_g+\frac{1}{4}(N-2i)(N+2i-2)\right),
$$
here $\Delta_g$ is the Laplace-Beltrami operator on $\mathbb{S}^N,$ $\mathbb{S}^N$ is the unit sphere with Riemann metric $g.$

\vskip8pt

In the case of $m=1$, the problem \eqref{1+} is reduced to the following prescribed curvature problem
\begin{equation}\label{2+}
-\Delta_{\mathbb{S}^n} \tilde u+\frac{N(N-2)}{2} \tilde u-\widetilde{K}(y)\tilde u^{\frac{N+2}{N-2}}=0,\quad \tilde u>0, \hbox{ on } \mathbb{S}^N.
\end{equation}
By using the stereo-graphic projection, problem \eqref{2+} is reduced  to the following elliptic problem in $\mathbb{R}^N,$
\begin{equation}\label{2++}
-\Delta u=K(y)u^{\frac{N+2}{N-2}},\quad u>0, \hbox{ in }\mathbb{R}^N, u\in D^{1, 2}(\mathbb{R}^N).
\end{equation}

Because of its geometry background, problem \eqref{2++}  has been extensively studied in the last decades. It is known that (see \cite{Wei2})  \eqref{2++}  does not always admit a solution. Hence we are more interested in the sufficient condition on the curvature function $K(y)$, under which the problem \eqref{2++}  admits a solution. Indeed, there have been  a lot of existence results  obtained in the literature, see for
example, \cite{aap, cgs, Li1, Lin1, Lin2, Yan} and the references therein. In particular, we know that any solution
of \eqref{2++} is radially symmetric if there is an $r_0>0$ such that $K(|y|)$ is nonincreasing in $(0, r_0]$
and nondecreasing in $[r_0,+\infty)$ (see \cite{bianchi}). It is natural to ask wether or not there are non-radial symmetric
solutions to \eqref{2++}. This questions was answered in the  paper of \cite{Wei2}. Recently in \cite{duan-musso-wei}, the authors proved the new type nonradial solution with doubling bubbles. 

\vskip8pt

In general case of any $m\geq1,$ the problem \eqref{2++} is getting more interest  due to its geometry roots and various applications in physics during the last decades. For instance, when
$m = 2$, the problem \eqref{1+} is related to the Paneitz operator, which was introduced by Paneitz
\cite{pan}  for smooth four dimensional Riemannian manifolds and was generalized by Branson \cite{Branson} to
smooth $N$-dimensional Riemannian manifolds. For various  existence results for problems involving  higher order
operator and other related problems, we refer the readers to the papers \cite{ bw, bw2,  Chang1,chang0, efj, ggs, gs2, gs, guo4, guo5, Pucci1, Pucci2}
and the references therein.  It is evident to conclude from these papers that compared with the problems with Laplace operators (that is $m=1$), the problems involving higher order operator
present new and challenging features which make the problem get more complicated.

By using the stereo-graphic projection, problem \eqref{2+} is reduced  to the following elliptic problem in $\mathbb{R}^N,$
\begin{equation}\label{pr1}
(-\Delta)^{m} u=K(y)u^{\frac{N+2m}{N-2m}},\quad u>0, \hbox{ in }\mathbb{R}^N, u\in D^{m, 2}(\mathbb{R}^N).
 \end{equation}

In \cite{guo2}, Guo and Li generalized the results in \cite{Wei2} and showed that the  problem \eqref{pr1} has infinitely many solutions provided  $K $ is radially symmetric  and has a local maximum at some  $r_0 >0$. More precisely, they assumed that there exist  $ r_0 $,  $c_0> 0 $ and $ l \in [2, N-2m)  $ such that
		\begin{equation*}
			K(s)= K(r_0)-c_0|s-r_0|^l+O\big({|s-r_0|^{l+\sigma}}\big),
			\quad s\in(r_0-\delta, r_0+\delta),
		\end{equation*}
		for some  $\sigma, \delta $  small positive constants.  These solutions are obtained by gluing together a large number of  Aubin-Talenti bubbles  (see \cite{Swanson,Wei1})
		\begin{align*}
			U_{x, \Lambda}(y)= c_{N,m} \Big(\frac{\Lambda}{1+\Lambda^2|y-x|^2}\Big)^{\frac{N-2m}{2}}, \, c_{N,m}=\left(\prod_{i=-m}^{m-1} ( N+2i)\right)^{\frac{N-2m}{4m}}.
		\end{align*}
		For any $x \in {\R^N}$ and $\Lambda \in {\R^+}$, these functions solve
		\begin{equation}\label{criticalequation}
			(-\Delta)^{m} u -  u^{\frac{N+2m}{N-2m}}=0,  \quad ~ \text{in}~ {\R}^N.
		\end{equation}
		In fact, up to scaling and transformation,  they are the only positive solutions to \eqref{criticalequation}. Moreover, it known that the kernel of the linear operator associated to \eqref{criticalequation} is spanned by
\begin{equation}\label{Zj}
		Z_i(y) :={\partial U \over \partial y_i}(y), \quad i=1, \cdots , N, \quad
		Z_{N+1}(y) :={N-2m \over 2} U(y)+y\cdot \nabla U(y).
	\end{equation}
Meanwhile, these functions can span the set of the solution to
	\begin{equation}\label{linearized}
		(-\Delta)^m \phi=(m^*-1) U^{m^*-2} \phi,~ \text{in}~ \mathbb{R}^N, \quad \phi \in D^{m,2}(\mathbb{R}^N).
	\end{equation}

Then the main order of the solutions constructed in \cite{guo2} looks like
		\begin{equation*}
			\tilde  u_k \sim \sum_{j=1}^k U_{x_j, \bar \Lambda},
		\end{equation*}
		where  $ \bar \Lambda $ is a positive constant and the points $x_j$ are distributed along the vertices of a regular polygon of $k$ edges in the $(y_1,y_2)$-plane, with $|x_j| \to r_0$ as $k \to \infty$:
		\begin{equation*}
			x_j=\Big(\tilde r \cos\frac{2(j-1)\pi}k, \tilde r\sin\frac{2(j-1)\pi}k, 0,\cdots,0\Big),\quad j=1,\cdots,k,
		\end{equation*}
		with  $ \tilde r \rightarrow  r_0  $ as  $ k \rightarrow \infty $.


Motivated by the work in \cite{duan-musso-wei}, the purpose of this paper is to present a different type of solution to \eqref{pr1} with a more complex concentration structure, which cannot be reduced to a two-dimensional one.

    To present our results, we assume that $K$ satisfies the following conditions:\\[2mm]
    $(\bf K)$ : $K(y)$ is radially symmetric. There are  $ r_0  $ and  $c_0> 0 $ such that
    \begin{equation}
    	K(s)= K(r_0)-c_0|s-r_0|^l+O\big({|s-r_0|^{l+1}}\big),
    	\quad s\in(r_0-\delta, r_0+\delta),
    \end{equation}
    where \begin{equation}\label{assumptionform}
    	l \in ( l_{N,m},N-2m]\cap[2,N-2m],\quad N\ge 2m+3,
    \end{equation}
    and \begin{equation}
        l_{N,m}=\frac{N-2m}{4(N-2m+1)} \Big(-3N+6m-1+(25(N-2m)^2+22(N-2m)+1)^\frac{1}{2}\Big).
    \end{equation}
Let $k$ be an integer number and consider the points below:
	\begin{align*}
		x^{+}_{k,j,r}=r\bigg((1-h^{2})^{1/2}\cos\frac{2(j-1)\pi}{k},(1-h^{2})^{1/2}\sin\frac{2(j-1)\pi}{k},h,{\bf {0}}\bigg), \quad j=1,\cdots,k,
	\end{align*}
	and
	\begin{align*}
		x^{-}_{k,j,r}=r\bigg((1-h^{2})^{1/2}\cos\frac{2(j-1)\pi}{k},(1-h^{2})^{1/2}\sin\frac{2(j-1)\pi}{k},-h,{\bf {0}}\bigg), \quad j=1,\cdots,k,
	\end{align*}
	where  $ {\bf{0}} $ is the zero vector in  $\mathbb{R}^{N-3} $ and  $ h, r  $ are positive parameters.
       For any point $y \in \mathbb{R}^{N}$, we set $y = (y',y'')$,$y'\in\mathbb{R}^{2}$,$y''\in\mathbb{R}^{N-2}$.  Let
		\begin{align}\label{Wrh1}
			W_{r,h,\Lambda} (y)
			&= \sum_{j=1}^kU_{x^{+}_{k,j,r}, \Lambda}  (y) +\sum_{j=1}^k U_{x^{-}_{k,j,r}, \Lambda}  (y), \quad y \in \R^N.
		\end{align}
  and
	\begin{equation}
		\mu_k =k^{\frac{N-2m}{N-2m-l}}.
	\end{equation}
 In this paper, we prove that for any  $k $ large enough problem \eqref{pr1} has  a family of solutions $u_k$ with the main term looks like the following  form:
\begin{equation}\label{approximatesolution2}
	u_k(y) \sim W_{r_k,h_k,\Lambda_k\mu_k}(y).
\end{equation}

    In order to simplify our proof, we normalize the problem first. Without loss of generality, we may assume that $K(r_0) = 1$ and $r_{0}=1$.  Let
 $ v(y)={\mu_k}^{-\frac{N-2m}{2}}u \big(\frac{|y|}{{\mu_k}}\big) $, so that problem \eqref{pr1} becomes
		\begin{align}\label{Prob2}
				(-\Delta)^m v=  K\Big(\frac{|y|}{{\mu_k}}\Big) v^{m^*-1}, \quad v >0,  \quad \text{in}~ \mathbb{R}^N,
				\quad
				v \in D^{m,2}(\mathbb{R}^N).
			\end{align}
   In our proof, we will equivalently construct a family of solutions to \eqref{Prob2}, which are small perturbations of $W_{r,h,\Lambda}$, for any integer $k$ sufficiently large.
	For  $j= 1,\cdots, k $, we divide $\mathbb{R}^N$ into $k$ parts:
	\begin{align*}
		\Omega_j := &\Big\{y=(y_1, y_2, y_3, y'') \in \mathbb{R}^3 \times \mathbb{R}^{N-3}:
		\nonumber\\[2mm]
		& \qquad\Big\langle \frac{(y_1, y_2)}{|(y_1, y_2)|},  \Big(\cos{\frac{2(j-1) \pi}{k}}, \sin{\frac{2(j-1) \pi}{k}}\Big)  \Big\rangle_{\mathbb{R}^2}\geq \cos{\frac \pi k}\Big\}.
	\end{align*}
	where $ \langle , \rangle_{\mathbb{R}^2}$ denote the dot product in $\mathbb{R}^2$. Furthermore, we divide $\Omega_j$ into two parts:
	\begin{align*}
		\Omega_j^+= & \Big\{y:  y=(y_1, y_2, y_3, y'')  \in  \Omega_j, y_3\geq0 \Big\},
		\\[2mm]
		\Omega_j^-= & \Big\{y: y=(y_1, y_2, y_3, y'')  \in  \Omega_j, y_3<0 \Big\},
	\end{align*}
	then
	$$\mathbb{R}^N= \underset{j=1}{\overset{k}{\cup}} \Omega_j,  \quad \Omega_j=  \Omega_j^+\cup  \Omega_j^-$$
	and
	$$ \Omega_j \cap  \Omega_i=\emptyset,  \quad  \Omega_j^+\cap  \Omega_j^-=\emptyset, \qquad \text{if}\quad i\neq j.$$

		\medskip

	Define the symmetric Sobolev space:
	\begin{align*}
		H_{s,k} = \bigg\{ u : &u \hbox{ is  even  in } y_{\ell}, \quad \ell =2,\cdots,N,
		\\&u(r\cos\varphi,r\sin\varphi,y'') = u\bigg(r\cos\bigg(\varphi + \frac{2\pi j}{k}\bigg), r\sin\bigg(\varphi + \frac{2\pi j}{k}\bigg),y''\bigg) ,\hbox{ }j=1,\cdots,k\bigg\},
	\end{align*}
where $\varphi = \arctan{\frac{y_2}{y_1}}$. We define the following norms:
	\begin{align}
		||u||_{*,k} = \sup_{y\in\mathbb{R}^{N}}\bigg(\sum_{j=1}^{k}\bigg(\frac{1}{(1 + |y-x^{+}_{k,j,r}| )^{\frac{N-2m}{2}+\tau}}+\frac{1}{(1 + |y-x^{-}_{k,j,r}| )^{\frac{N-2m}{2}+\tau}}  \bigg)\bigg)^{-1}|u(y)|,
	\end{align}
and	
\begin{align}
	||f||_{**,k} = \sup_{y\in\mathbb{R}^{N}}\bigg(\sum_{j=1}^{k}\bigg(\frac{1}{(1 + |y-x^{+}_{k,j,r}| )^{\frac{N+2m}{2}+\tau}}+\frac{1}{(1 + |y-x^{-}_{k,j,r}| )^{\frac{N+2m}{2}+\tau}}  \bigg)\bigg)^{-1}|f(y)|,
\end{align}
	where $\tau$ is any fixed number such that
	\begin{equation}
		\tau \in \bigg(\frac{N-2m-l}{N-2m},\frac{N-2m-l}{N-2m}+\epsilon_1\bigg),
	\end{equation}  for $\epsilon_1>0$ is a small constant. For the reader's convenience, we will provide a collection of notation. Throughout this paper, we employ  $C,  C_j  $ to denote certain constants and  $ \sigma, \tau,  \zeta_j  $ to denote some small constants or functions.  We also use $ \delta_{ij} $  to denote Kronecker delta function:
	\[ \delta_{ij}= \begin{cases} 1,  \quad \text{if} ~ i= j, \\[2mm]
		0,  \quad \text{if} ~ i \neq j.
	\end{cases} \]
	Furthermore, we  also  employ the notation  by writing  $O(f(r,h)), o(f(r,h))  $ for the functions which satisfy
	\begin{equation*}
		\text{if} \quad g(r,h) \in O(f(r,h)) \quad \text{then}\quad {\lim_{k \to+\infty}} \Bigg|\, \frac{g(r,h)}{f(r,h)} \, \Bigg|\leq C<+\infty,
	\end{equation*}
	and
	\begin{equation*}
		\text{if} \quad g(r,h) \in o(f(r,h)) \quad \text{then}\quad {\lim_{k \to+\infty}} \frac{g(r,h)}{f(r,h)}=0.
	\end{equation*}
	
 The main results of the paper are the following:
	\begin{theorem}\label{main1}
		Suppose that $N \geq 2m+3$, $K(r)$ satisfies $(\bf K)$, $K(r)\geq 0$, $K(r)$ and $K'(r)$ are bounded. Then there is an integer $k_{0}>0$, such that for any integer $k \geq k_{0}$, equation \eqref{pr1} has a solution $u_{k}$ of the form
		\begin{equation}
		    u_{k}(y) = W_{r_{k},h_{k},\Lambda_{k}\mu_k}(y) + \omega_{k}(y),
		\end{equation}
		where $\omega_{k}\in H_{s,k}\bigcap D^{m,2}(\mathbb{R}^{N})\bigcap C(\mathbb{R}^{N})$, and

  \begin{equation*}
      |r_{k}-1| = O\bigg(\frac{1}{\mu_k^{l+\sigma}}\bigg), \ \Lambda_{k}\to \Lambda_{0}>0, \  h_{k}k^{\frac{N-2m-1}{N-2m+1}}\to h_{0}, \ ||\omega_{k}||_{*,k}=O\bigg(\frac{1}{\mu_k^{\frac{l}{2}+\sigma}}\bigg) \ as \ k \rightarrow +\infty,
  \end{equation*}
		for some $\sigma > 0$, where $ \Lambda_0,  h_0 $ are the constants in  \eqref{lambda0}, \eqref{h_0}.
	\end{theorem}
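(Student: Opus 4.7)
The natural framework is a finite-dimensional Lyapunov--Schmidt reduction, adapted from \cite{guo2} and \cite{duan-musso-wei} but carried out in the symmetric class $H_{s,k}$. I would write $u = W_{r,h,\Lambda} + \omega$ with $(r,h,\Lambda)$ in a suitable admissible set
\[
	\mathcal{A}_k = \Bigl\{ (r,h,\Lambda) : |r-1| \le \mu_k^{-1+\sigma},\; \Lambda \in [\Lambda_0/2,2\Lambda_0],\; h \in [h_0/2, 2h_0]\cdot k^{-\frac{N-2m-1}{N-2m+1}} \Bigr\},
\]
and decompose $\omega \in H_{s,k}$ as $\omega = \phi + \sum_{\ell=1}^{3}\sum_{j=1}^{k} c_\ell \bigl(Z_{\ell,j}^+ + Z_{\ell,j}^-\bigr)$, where $Z_{\ell,j}^\pm$ are the rescaled kernel functions from \eqref{Zj} centered at $x^{\pm}_{k,j,r}$ corresponding to derivatives in the directions of $r$, $h$ and $\Lambda$, and $\phi$ is orthogonal to them.

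\noindent\textbf{Step 1: linear theory.}
I would establish that the linearized operator $\mathcal{L}_k \phi := (-\Delta)^m\phi - (m^*-1)K(|y|/\mu_k) W^{m^*-2}\phi$, restricted to the orthogonal complement of $\mathrm{span}\{Z^\pm_{\ell,j}\}$ inside $H_{s,k}$, is invertible with norm estimate $\|\phi\|_{*,k} \le C \|\mathcal{L}_k\phi\|_{**,k}$ uniformly in $k$. The argument is by contradiction: if a blow-up sequence existed, one would test against each rescaled bubble and use the non-degeneracy of $U$ (span of \eqref{Zj}) to get a contradiction. The parameter $\tau$ is chosen precisely so that the weighted norms capture the correct decay of $\sum_j U_{x^\pm_{k,j,r},\Lambda\mu_k}^{m^*-2}$ and still dominate the nonlinearity.

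\noindent\textbf{Step 2: nonlinear reduction.}
With the linear theory in hand, I would solve the projected nonlinear equation for $\phi$ via the contraction mapping principle in the ball $\{\|\phi\|_{*,k} \le \mu_k^{-l/2-\sigma}\}$. The crucial point is to estimate the error $E_k := (-\Delta)^m W - K(|y|/\mu_k) W^{m^*-1}$ in the $\|\cdot\|_{**,k}$ norm: the main contribution comes from the expansion of $K$ near $r_0=1$ (giving size $\mu_k^{-l}$) and from the interaction $\bigl(\sum_j U_{x^+}+\sum_j U_{x^-}\bigr)^{m^*-1} - \sum U_{x^+}^{m^*-1} - \sum U_{x^-}^{m^*-1}$ between neighboring bubbles and between the two towers. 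Together with the nonlinear quadratic term, this gives a contraction and a unique solution $\phi = \phi_{r,h,\Lambda}$ satisfying the required size estimate.

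\noindent\textbf{Step 3: reduced energy and critical point.}
The remaining task is to choose $(r,h,\Lambda)\in\mathcal{A}_k$ so that the Lagrange multipliers $c_\ell$ vanish, which is equivalent to finding a critical point of the reduced functional $F_k(r,h,\Lambda) := J(W_{r,h,\Lambda}+\phi_{r,h,\Lambda})$ where $J$ is the energy associated to \eqref{Prob2}. Using the smallness of $\phi$, the main expansion reads
\begin{align*}
	F_k(r,h,\Lambda) = k\Bigl[ A_0 + \tfrac{A_1}{\mu_k^l}\,\Psi_1(r,\Lambda) - A_2 \Bigl(\tfrac{k}{\Lambda\mu_k}\Bigr)^{N-2m}\!\Psi_2(h) - \tfrac{A_3}{(\Lambda\mu_k h)^{N-2m}} + \text{lower order}\Bigr],
\end{align*}
where the first two bracketed terms come from the curvature defect and the intra-tower interactions (as in \cite{guo2}) and the third one is the new inter-tower interaction between a bubble at level $+h$ and the nearest bubble at level $-h$ (computed from the explicit asymptotics of $U_{x^+}^{m^*-1}\ast U_{x^-}$). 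Balancing the last two terms forces $h \sim k^{-(N-2m-1)/(N-2m+1)}$, and balancing the first two gives $\mu_k = k^{(N-2m)/(N-2m-l)}$, consistent with the definitions. The condition $l > l_{N,m}$ in \eqref{assumptionform} is exactly what ensures the remainder terms do not swallow the main order and that the critical point is stable.

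\noindent\textbf{Main obstacle.}
The hardest step is the precise inter-tower interaction estimate: for the polyharmonic operator one cannot integrate by parts as for $m=1$, so the expansion of $\int W_{x^+}^{m^*-1}W_{x^-}$ and of its derivatives in $h$ must be obtained from the explicit kernel of $(-\Delta)^m$ and careful asymptotic analysis when $h \to 0$. Making this compatible with the norm $\|\cdot\|_{**,k}$ and with the range \eqref{assumptionform} is the delicate technical heart of the argument; once it is done, the fixed point and the reduction follow the now-standard scheme.
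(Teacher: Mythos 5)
Your overall framework — Lyapunov--Schmidt reduction in the symmetric class $H_{s,k}$ with orthogonality to the tangent directions $\partial_r, \partial_h, \partial_\Lambda$ of the bubble configuration, linear invertibility by a blow-up/non-degeneracy contradiction, contraction mapping for $\phi$, and then a finite-dimensional critical-point argument — is the same as the paper's.

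However, the inter-tower interaction term you wrote down is wrong, and this is not a cosmetic issue because the entire choice of the scaling $h \sim k^{-\frac{N-2m-1}{N-2m+1}}$ hinges on getting it right. You model the inter-tower contribution as the interaction of a single bubble at level $+h$ with \emph{the nearest} bubble at level $-h$, yielding $\sim (\mu_k h)^{-(N-2m)}$. But a bubble in the upper tower interacts comparably with all $\sim kh$ bubbles in the lower tower whose horizontal separation is $\lesssim \mu_k h$: writing $|x^+_{k,1,r}-x^-_{k,j,r}| \sim r\sqrt{(j/k)^2 + h^2}$ and approximating the sum by the integral with $s = j/(kh)$ gives
\[
\sum_{j=1}^{k}\frac{1}{|x^+_{k,1,r}-x^-_{k,j,r}|^{N-2m}} \sim \frac{k}{r^{N-2m}\,h^{N-2m-1}}\int_0^\infty\frac{ds}{(1+s^2)^{\frac{N-2m}{2}}},
\]
i.e.\ an extra factor $kh$ relative to your single-pair estimate. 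This is exactly the paper's term $\frac{B_5\,k}{r^{N-2m} h^{N-2m-1}\sqrt{1-h^2}}$ in Lemma~\ref{express7} and Proposition~\ref{func}. If you balance $\partial_h$ of the intra-tower term ($\sim k^{N-2m} h/\mu_k^{N-2m}$) against $\partial_h$ of \emph{your} inter-tower term ($\sim 1/(\mu_k^{N-2m}h^{N-2m+1})$) you obtain $h\sim k^{-(N-2m)/(N-2m+2)}$, which contradicts the $h\sim k^{-(N-2m-1)/(N-2m+1)}$ you state two sentences later. Balancing against the correct term $\sim k/(\mu_k^{N-2m}h^{N-2m})$ gives the right exponent. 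So the ``main obstacle'' you identify (higher-order kernel vs.\ integration by parts) is actually a red herring; the genuinely delicate point is summing the cross-tower interactions, not estimating a single pair, and this is what produces the nontrivial scaling of $h$.

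A secondary, less serious, imprecision: you decompose $\omega = \phi + \sum_\ell\sum_j c_\ell(Z^+_{\ell,j}+Z^-_{\ell,j})$. In the standard scheme the coefficients $c_\ell$ are Lagrange multipliers on the right-hand side of the projected equation, and one shows $c_\ell=0$ at a critical point of the reduced energy, so that $\omega = \phi$; no kernel piece is added to $\omega$. Also, the paper does not just assert the critical point is ``stable'': it runs a gradient-flow/min-max argument on a carefully chosen box $D_k$ (Proposition~\ref{5.73.3}), and the three boundary estimates there are exactly where the lower bound $l > l_{N,m}$ in \eqref{assumptionform} is used to keep error terms from overwhelming the leading order. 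You gesture at this but the argument needs to be spelled out for the proof to close.
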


	As a consequence, we have

\begin{theorem}\label{main12}
		Under the same assumptions as in Theorem \ref{main1},  equation \eqref{1+} has infinitely many positive nonradial solutions, which  are invariant under some non-trivial sub-groups of $O(3),$ and their energy can be made arbitrarily large.
	\end{theorem}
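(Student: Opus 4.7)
Since Theorem \ref{main12} is essentially a reformulation of Theorem \ref{main1} on the sphere, my plan is to obtain it by pulling the $\mathbb{R}^N$-solutions $u_k$ back to $\mathbb{S}^N$ via the stereographic projection. The problems \eqref{1+} and \eqref{pr1} are equivalent under this projection: letting $\pi:\mathbb{S}^N\setminus\{N\}\to\mathbb{R}^N$ denote the standard stereographic projection, the conformal covariance of the Paneitz--GJMS operator $D^m$ implies that positive solutions $u\in D^{m,2}(\mathbb{R}^N)$ of \eqref{pr1} are in bijection with positive solutions $\tilde u$ of \eqref{1+} via
\[
\tilde u(\xi)=\Bigl(\tfrac{1+|\pi(\xi)|^2}{2}\Bigr)^{(N-2m)/2}u(\pi(\xi)),\qquad \widetilde K(\xi)=K(\pi(\xi)),
\]
and this correspondence preserves the associated Dirichlet energies. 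Thus the family $\{u_k\}$ produced by Theorem \ref{main1} yields a family $\{\tilde u_k\}$ of positive solutions of \eqref{1+}; what remains to check is (i) the $O(3)$-symmetry, (ii) non-radiality, and (iii) that infinitely many of the $\tilde u_k$'s are pairwise distinct with unbounded energy.

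For (i), $u_k\in H_{s,k}$ is by construction invariant under the subgroup $G_k\subset O(N)$ generated by the rotation of angle $2\pi/k$ in the $(y_1,y_2)$-plane together with the reflections $y_\ell\mapsto -y_\ell$ for $\ell=2,\dots,N$. Under stereographic projection these transformations correspond to isometries of $\mathbb{S}^N$, and because the bubble centers $x^{\pm}_{k,j,r_k}$ only involve the first three coordinates, the part of $G_k$ acting nontrivially on the sphere restricts to a non-trivial finite subgroup of $O(3)$, built from the $\mathbb{Z}_k$-rotation in the $(y_1,y_2)$-plane and the reflection $y_3\mapsto -y_3$ (a dihedral-type group). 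This gives the claimed equivariance of $\tilde u_k$. For (ii), non-radiality is immediate from the profile: $W_{r_k,h_k,\Lambda_k\mu_k}$ has $2k$ concentration points lying off the origin and is not invariant under rotations mixing $(y_1,y_2)$ with $y_3$; since $\|\omega_k\|_{*,k}\to 0$, the perturbation cannot restore such a symmetry.

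For (iii), the energy of $u_k$ on $\mathbb{R}^N$, and hence of $\tilde u_k$ on $\mathbb{S}^N$, is to leading order $2k$ times the energy $S_{N,m}$ of a single Aubin--Talenti bubble, because bubble-bubble interactions are of lower order in $k$ (this is quantified by the estimates on $W_{r,h,\Lambda}$ built from Theorem \ref{main1}) and $\|\omega_k\|_{*,k}=O(\mu_k^{-l/2-\sigma})$. Consequently $\mathcal{E}(\tilde u_k)=2k\,S_{N,m}+o(k)\to\infty$ as $k\to\infty$, so extracting a subsequence on which the energy is strictly increasing yields infinitely many pairwise distinct solutions with arbitrarily large energy.

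The only point requiring any care is the conformal invariance of the $D^m$-energy under stereographic projection; this is the defining property of the Paneitz--GJMS family and therefore requires no new input. With it, Theorem \ref{main12} follows from Theorem \ref{main1} essentially by bookkeeping, so no genuine obstacle is expected — the real mathematical content has already been paid for in the construction of $u_k$.
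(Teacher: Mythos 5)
Your proof is correct and is essentially the argument the paper intends: the paper presents Theorem \ref{main12} simply ``as a consequence'' of Theorem \ref{main1} and gives no separate proof, and your reduction via the conformal covariance of $D^m$ under stereographic projection, together with the three verifications (pullback of the $H_{s,k}$-symmetry to a dihedral-type subgroup of $O(3)$, nonradiality from the localized $2k$-bubble profile plus smallness of $\|\omega_k\|_{*,k}$, and the energy expansion $I(W_{r_k,h_k,\Lambda_k})=kA_1+o(k)\to\infty$), is exactly the standard bookkeeping one would write down. One small point worth noting explicitly if you were to polish this: the constant $A_1=(1-\tfrac{2}{m^*})\int U_{0,1}^{m^*}$ in Proposition \ref{pr0position2,6} is already twice the single-bubble energy, so the leading term $kA_1$ is indeed the energy of $2k$ bubbles, matching your ``$2kS_{N,m}$''.
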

	 Throughout of this paper, we assume $(r,h, \Lambda) \in{{\mathscr D}_k} $, where
		\begin{align}\label{definitionofsk}
			{{\mathscr D}_k}
			= \Bigg\{&(r,h,\Lambda) :\,  r\in \Big[k^{\frac{N-2m}{N-2m-l}}-\hat \sigma, k^{\frac{N-2m}{N-2m-l}}+\hat  \sigma \Big], \quad
			\Lambda \in \Big[\Lambda_0-\hat   \sigma,  \Lambda_0+\hat \sigma\Big], \nonumber
			\\[2mm]
			& \qquad \qquad
			h \in \Big[\frac{h_0}{k^{\frac{N-2m-1}{N-2m+1}}}  \Big(1-\hat   \sigma\Big),
			\frac{h_0}{k^{\frac{N-2m-1}{N-2m+1}}} \Big(1+\hat \sigma\Big)\Big] \Bigg\},
		\end{align}
		with  $ \hat \sigma$ is a small fixed number, independent of $k$. For  convenience, we denote
		$$ \quad{\lambda_k}= \frac{h_0}{k^{\frac{N-2m-1}{N-2m+1}}}. $$
	
	The remaining part of this paper will be organized as follows:
In Section \ref{sec3}, we will establish the linearized theory for the linearized  problem and give estimates for the error terms. In Section \ref{sec4}, we shall proof Theorem \ref{main1} by showing there exists a critical point of reduction function  $  F(r,h,\Lambda) .$
		The energy expansions and some tedious computations and useful estimates will be given in Appendices \ref{appendixA}-\ref{appendixB}.


	
	\medskip
	
	\section{Finite dimensional reduction}  \label{sec3}
	
	\medskip
	We consider the following linearized problem:
	\begin{align}\label{lin}
					\begin{cases}
						(-\Delta)^m {\phi}-(m^*-1)K\big(\frac{|y|}{{\mu_k}}\big)
			W_{r,h,\Lambda}^{m^*-2}\phi=f+\sum\limits_{i=1}^k\sum\limits_{\ell=1}^3 \Big({c}_\ell U_{x^{+}_{k,i,r},\Lambda}^{m^*-2}\overline{\mathbb{Z}}_{\ell i}+{c}_\ell U_{x^{-}_{k,i,r},\Lambda}^{m^*-2}\underline{\mathbb{Z}}_{\ell i}\Big)
			\;\;
			\text{in}\;
			\R^N,
						\\[2mm]
						\phi\in \mathbb{E},
					\end{cases}
     \end{align}
	for some constants $c_{\ell} $, where the functions $\overline{\mathbb{Z}}_{\ell j}$ and $\underline{\mathbb{Z}}_{\ell j}$ are given by
		\begin{align*}
			&\overline{\mathbb{Z}}_{1j}=\frac{\partial U_{x^{+}_{k,j,r}, \Lambda}}{\partial r},
			\qquad \qquad
			\overline{\mathbb{Z}}_{2j}=\frac{\partial U_{x^{+}_{k,j,r}, \Lambda}}{\partial h},
			\qquad \qquad
			\overline{\mathbb{Z}}_{3j}=\frac{\partial U_{x^{+}_{k,j,r}, \Lambda}}{\partial \Lambda}, \nonumber
			\\[2mm]
			& \underline{\mathbb{Z}}_{1j}=\frac{\partial U_{x^{-}_{k,j,r}, \Lambda}}{\partial r},
			\qquad \qquad
			\underline{\mathbb{Z}}_{2j}=\frac{\partial U_{x^{-}_{k,j,r}, \Lambda}}{\partial h},
			\qquad \qquad
			\underline{\mathbb{Z}}_{3j}=\frac{\partial U_{x^{-}_{k,j,r}, \Lambda}}{\partial \Lambda},
		\end{align*}
		for $ j= 1, \cdots, k $. Moreover the function $\phi$ belongs to the set $\mathbb{E}$ given by
		\begin{align}\label{SpaceE}
			\mathbb{E}= \Big\{\phi:  \phi\in H_s, \quad \big<U_{x^{+}_{k,j,r}, \Lambda}^{m^*-2}  \overline{\mathbb{Z}}_{\ell j} ,\phi \big> = \big<U_{x^{+}_{k,j,r}, \Lambda}^{m^*-2}  \underline{\mathbb{Z}}_{\ell j} ,\phi \big> = 0,  \quad j= 1, \cdots, k, \quad \ell= 1,2, 3  \Big\}.
            \end{align}
	where $\big<u,v\big> = \displaystyle\int_{\R^N}{u v} .$
	
	\medskip

	\medskip
	\begin{lem}\label{lem2.1}
		Suppose that $\phi_{k}$ solves \eqref{lin} for $f=f_{k}$. If  $\|f_{k}\|_{{**,k}}$ tends to zero as  $k$ tends to infinity, so does  $\|\phi_{k}\|_{{*,k}}$.
	\end{lem}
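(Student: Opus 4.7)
The plan is a standard contradiction/blow-up argument for the weighted norms, adapted to the double-tower configuration. Assume for contradiction that there is a sequence $\phi_k$ solving \eqref{lin} with $f=f_k$, $\|f_k\|_{**,k}\to 0$, but $\|\phi_k\|_{*,k}\geq c>0$. By linearity I may normalize $\|\phi_k\|_{*,k}=1$. The first step is to write the equation in integral form: using the Green's function $G_m$ of $(-\Delta)^m$ on $\R^N$, which decays like $|y-z|^{-(N-2m)}$, I obtain
\begin{equation*}
\phi_k(y) = (m^*-1)\int_{\R^N} G_m(y,z)\,K\!\left(\tfrac{|z|}{\mu_k}\right) W_{r,h,\Lambda}^{m^*-2}(z)\,\phi_k(z)\,dz + \int_{\R^N} G_m(y,z)\,\bigl[f_k + \mathcal{L}_k\bigr](z)\,dz,
\end{equation*}
where $\mathcal{L}_k$ denotes the Lagrange multiplier combination on the right-hand side of \eqref{lin}.

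The heart of the proof is a pointwise convolution estimate. I will show that for the weight
$$
\rho_*(y) = \sum_{j=1}^k\Big((1+|y-x^{+}_{k,j,r}|)^{-\frac{N-2m}{2}-\tau} + (1+|y-x^{-}_{k,j,r}|)^{-\frac{N-2m}{2}-\tau}\Big),
$$
one has
$$\int G_m(y,z)\,W^{m^*-2}(z)\,\rho_*(z)\,dz \leq C k^{-\sigma_0}\rho_*(y)$$
for some $\sigma_0>0$; this is the place where the restriction on $\tau$ above $(N-2m-l)/(N-2m)$ is essential, together with the inequality $\frac{N-2m}{2}+\tau<\frac{N+2m}{2}+\tau-2m$. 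The analogous computation for $f_k$-term gives $\|f_k\|_{**,k}\rho_*(y)$. Controlling the Lagrange term requires testing \eqref{lin} against each kernel element $\overline{\mathbb{Z}}_{\ell i}, \underline{\mathbb{Z}}_{\ell i}$, solving the almost-diagonal linear system for $c_\ell$ — by invertibility of the $3\times 3$ dominant block this yields $|c_\ell| = o(1)\|\phi_k\|_{*,k} + O(\|f_k\|_{**,k})$, so the Lagrange contribution is also negligible relative to $\rho_*$. Combining these, I obtain $\|\phi_k\|_{*,k}\leq o(1)\|\phi_k\|_{*,k} + o(1)$, which already contradicts the normalization if the convolution estimate can be made with constant strictly less than $1$.

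If instead one only gets $O(1)\rho_*(y)$, one argues by rescaling: let $y_k\in\R^N$ be a point where the normalized $\phi_k/\rho_*$ is nearly maximal. The previous estimate localises $y_k$ near one of the $2k$ bubble centers, say $x^{+}_{k,j_k,r}$. Set $\tilde\phi_k(y) = (\Lambda_k)^{-\frac{N-2m}{2}}\phi_k(x^{+}_{k,j_k,r}+y/\Lambda_k)$. Using Lipschitz decay of $K$ and the isolation of $x^{+}_{k,j_k,r}$ from the other $2k-1$ centers (their mutual distances are $\gtrsim \mu_k/k$ and $\gtrsim \mu_k h_k$, both tending to $\infty$), I extract a weak $D^{m,2}$-limit $\tilde\phi_\infty$ on $\R^N$ solving the model linearized equation \eqref{linearized}. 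By the symmetry class $H_{s,k}$ and the orthogonality relations in $\mathbb{E}$, $\tilde\phi_\infty$ is orthogonal in the appropriate sense to every element of the kernel $\{Z_i\}_{i=1}^{N+1}$ in \eqref{Zj}, hence $\tilde\phi_\infty\equiv 0$, contradicting the fact that $\tilde\phi_k(0)$ stays bounded away from zero.

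The principal obstacle is the weighted convolution estimate: because there are $2k$ bubbles organized in two concentric tower-rings, one must carefully split $\R^N$ into the regions $\Omega_j^\pm$ and control cross-interactions between the upper and lower towers (distance of order $\mu_k h_k = h_0\mu_k/k^{(N-2m-1)/(N-2m+1)}$) as well as the intra-ring neighbors. The exponent condition on $\tau$ and the condition $N\geq 2m+3$ in \eqref{assumptionform} are used precisely to keep these lattice sums summable and to secure the small factor $k^{-\sigma_0}$; the rest is then routine once the convolution estimate is in place.
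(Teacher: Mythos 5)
Your proposal follows essentially the same contradiction/blow-up route as the paper: normalize $\|\phi_k\|_{*,k}=1$, write the Green's function representation, estimate the three contributions, estimate the multipliers $c_\ell$ by testing against $\overline{\mathbb{Z}}_{q1}$, localize the mass of $\phi_k$ near a bubble center, and derive a contradiction from the nondegeneracy of the limit equation. The architecture is correct.

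Two points are imprecise and would need to be repaired. First, the central convolution estimate does \emph{not} produce a uniform factor $k^{-\sigma_0}$ multiplying $\rho_*(y)$: what Lemma~\ref{laa3} actually gives is a gain of $\sigma>0$ in the \emph{decay exponent}, i.e.\ a bound by $\sum_j (1+|y-x^{\pm}_{k,j,r}|)^{-\frac{N-2m}{2}-\tau-\sigma}$. This ratio to $\rho_*$ is $O(1)$ near any bubble center, so your ``Option 1'' (immediate contradiction from a small multiplicative constant) is not available; the blow-up branch is not a fallback but the only route. Second, the inequality you cite in passing, $\frac{N-2m}{2}+\tau<\frac{N+2m}{2}+\tau-2m$, is in fact an equality, not a strict inequality, so it cannot be the source of the gain; the actual smallness comes from the lattice sums $\sum_{j\ge 2}|x^{+}_{k,1,r}-x^{\pm}_{k,j,r}|^{-\alpha}$ being controlled by $(k/\mu_k)^\alpha\to 0$, together with the choice $\tau>\frac{N-2m-l}{N-2m}$. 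Once the pointwise bound (the analogue of \eqref{g8}) is in hand, the localization and the translated-limit argument (note: translation by $x^{+}_{k,j,r}$ suffices, no $\Lambda_k$ rescaling needed since $\Lambda_k$ is bounded away from $0$ and $\infty$) closes the proof exactly as you indicate.
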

	\begin{proof}
		We prove the Lemma  by contradiction. Suppose that there exists a sequence of  $(\widetilde{r_k}, h_k, \Lambda_k)\in {{\mathscr D}_k}$, and for $\phi_k$ satisfies \eqref{lin} with  $f=f_k,r= \widetilde{r_k}, h= h_k,  \Lambda= \Lambda_k$, with  $\Vert f _{k}\Vert_{{**,k}}\to 0 $, and  $\|\phi_k\|_{{*,k}}\ge c'>0$. Without loss of generality, we can assume that  $\|\phi_k\|_{{*,k}}=1 $.

		From \eqref{lin}, we know that
		\begin{equation}\label{J123}\begin{split}
				\phi_k(y)
				\,=\,&(m^*-1)\int_{\R^N}\frac1{|z-y|^{N-2m}}K\Big(\frac{|z|}{{\mu_k}}\Big)
				W_{r,h,\Lambda}^{m^*-2}\phi_k(z)\,{\mathrm d}z
				+\int_{\R^N}\frac1{|z-y|^{N-2m}}\,  f_k(z) {\mathrm d}z
				\\[2mm]
				&+\int_{\R^N}\frac1{|z-y|^{N-2m}}\, \sum_{j=1}^k\sum_{\ell=1}^3
				\Big(\, {c_\ell}U_{x^{+}_{k,j,r},\Lambda}^{m^*-2}\overline{\mathbb{Z}}_{\ell j}+{c_\ell}U_{x^{-}_{k,j,r},\Lambda}^{m^*-2}\underline{\mathbb{Z}}_{\ell j}\,\Big) {\mathrm d}z
				\\[2mm]
				:=\,&J_1\,+\,J_2\,+\,J_3.
			\end{split}
		\end{equation}
		Using Lemma \ref{laa3}, we have
		\begin{align*}
			J_1 &\leq C\|{\phi_k}\|_{{*,k}} \, \int_{\R^N}\frac{K\big(\frac{|z|}{{\mu_k}}\big)}{|z-y|^{N-2m}}
			W_{r,h,\Lambda}^{m^*-2} \Big(\sum_{j=1}^k \Big[\frac1{(1+|z-x^{+}_{k,j,r}|)^{\frac{N-2m}{2}+\tau
			}}+\frac1{(1+|z-x^{-}_{k,j,r}|)^{\frac{N-2m}{2}+\tau}}\Big]\Big) \, {\mathrm d}z
			\\[2mm]
			&\leq C\|{\phi_k}\|_{{*,k}} \, \sum_{j=1}^k \Big[\frac1{(1+|z-x^{+}_{k,j,r}|)^{\frac{N-2m}{2}+\tau+\sigma}}+\frac1{(1+|z-x^{-}_{k,j,r}|)^{\frac{N-2m}{2}+\tau+\sigma}}\Big]\nonumber.
		\end{align*}
		
		It follows from Lemma \ref{lemb2} that
		\begin{align*}
			J_2& \le C\|f_k\|_{{**,k}}\int_{\R^N}\frac1{|z-y|^{N-2m}}\sum_{j=1}^k \Big[\frac1{(1+|z-x^{+}_{k,j,r}|)^{\frac{N+2m}2+\tau}}+\frac1{(1+|z-x^{-}_{k,j,r}|)^{\frac{N+2m}2+\tau}}\Big]\,{\mathrm d}z
			\\[2mm]
			& \le C\|f_k\|_{{**,k}} \sum_{j=1}^k \Big[\frac1{(1+|y-x^{+}_{k,j,r}|)^{\frac{N-2m}{2}+\tau}}+\frac1{(1+|y-x^{-}_{k,j,r}|)^{\frac{N-2m}2+\tau}}\Big] \nonumber.
		\end{align*}
		In order to estimate the term  $J_3 $, we will first give the estimates of  $ \overline{\mathbb{Z}}_{\ell j} $ and  $ \underline{\mathbb{Z}}_{\ell j} $
  \begin{equation}\label{estimatekernel}
      |\overline{\mathbb{Z}}_{\ell j}|\le  \frac{C(1+r\delta_{\ell 2})}{(1+|y-x^{+}_{k,j,r}|)^{N-2m+1}},\quad|\underline{\mathbb{Z}}_{\ell j}|\le  \frac{C(1+r\delta_{\ell 2})}{(1+|y-x^{-}_{k,j,r}|)^{N-2m+1}}.
  \end{equation}
	Thus,
		\begin{align}
			\sum_{j=1}^k\, \bigg< \frac1{|z-y|^{N-2m}}\, U_{x^{+}_{k,j,r},\Lambda}^{m^*-2} , \overline{\mathbb{Z}}_{\ell j} \, \bigg>
			& \le C \sum_{j=1}^k\, \int_{\R^N}\frac1{|z-y|^{N-2m}}\, \frac{(1+r \delta_{\ell 2})}{(1+|z-x^{+}_{k,j,r}|)^{N+2m}} \, {\mathrm d}z\nonumber
			\\[2mm]
			& \le C\sum_{j=1}^k\, \frac{(1+r\delta_{\ell 2})}{(1+|y-x^{+}_{k,j,r}|)^{\frac{N-2m}{2}+\tau}}, \quad \text{for} ~\ell= 1, 2, 3,
		\end{align}
		and
		\begin{align}\label{J3}
			\sum_{j=1}^k\, \bigg< \frac1{|z-y|^{N-2m}}\,
			U_{x^{-}_{k,j,r},\Lambda}^{m^*-2} , \underline{\mathbb{Z}}_{\ell j}\bigg>
			\le C  \sum_{j=1}^k\,  \frac{(1+r \, \delta_{\ell 2})}{(1+|y-x^{-}_{k,j,r}|)^{\frac{N-2m}{2}+\tau}}, \quad \text{for} ~\ell= 1, 2, 3.
		\end{align}

		\medskip
		Next, we estimate ${c_\ell}, \ell= 1,2,3 $. Multiply both sides of \eqref{lin} by  $\overline{\mathbb{Z}}_{q 1}, q=1,2,3 $, we obtain that
		\begin{equation}\label{equationofcl}
			\bigg<(-\Delta)^m {\phi_k}-(m^*-1)K\Big(\frac{|y|}{{\mu_k}}\Big)
				W_{r,h,\Lambda}^{m^*-2}{\phi_k} , \overline{\mathbb{Z}}_{q 1}
				\bigg> = \bigg<f_k + \sum_{j=1}^k\sum_{\ell=1}^3 \Big(\, {c_\ell}U_{x^{+}_{k,j,r},\Lambda}^{m^*-2}\overline{\mathbb{Z}}_{\ell j}+{c_\ell}U_{x^{-}_{k,j,r},\Lambda}^{m^*-2}\underline{\mathbb{Z}}_{\ell j}\,\Big), \overline{\mathbb{Z}}_{q 1}\bigg>.
		\end{equation}
		It follows from Lemma \ref{lemb1} that
		\begin{align*}
			\big|\big< f_k , \overline{\mathbb{Z}}_{q 1}\big>\big|
			 \,\le\,& C\|f_k\|_{{**,k}}\,\sum_{j=1}^k  \int_{\R^N}  \frac{1+r \, \delta_{q 2}}{(1+|y-x^{+}_{k,1,r}|)^{N-2m}} \nonumber\\[2mm]&\times\Big[\frac1{(1+|y-x^{+}_{k,j,r}|)^{\frac{N+2m}2+\tau}}
			+\frac1{(1+|y-x^{-}_{k,j,r}|)^{\frac{N+2m}2+\tau}}\Big]\,  \nonumber
			\\[2mm]
			 \,\le &C(1+r \, \delta_{q 2}) \|f_k\|_{{**,k}}.
		\end{align*}
		
		To estimate the left side of \eqref{equationofcl}, integrating by parts and we have
		\begin{align*}
			& \bigg|\bigg<(-\Delta)^m {\phi_k}-(m^*-1)K\Big(\frac{|y|}{{\mu_k}}\Big)
				W_{r,h,\Lambda}^{m^*-2}{\phi_k} , \overline{\mathbb{Z}}_{q 1}
				\bigg> \bigg|\nonumber
			\\[2mm]
			&= \int_{\R^N} \Big[(-\Delta)^m \overline{\mathbb{Z}}_{q 1} -(m^*-1)K\Big(\frac{|y|}{{\mu_k}}\Big)
			W_{r,h,\Lambda}^{m^*-2} \overline{\mathbb{Z}}_{q 1}\Big] {\phi_k}  \nonumber
			\\[2mm]
			&=(m^*-1) \int_{\R^N} \Big[1-K\Big(\frac{|y|}{{\mu_k}}\Big)\Big] W_{r,h,\Lambda}^{m^*-2} \overline{\mathbb{Z}}_{q 1}{\phi_k}+\Big(U_{x^{+}_{k,1,r},\Lambda}^{m^*-2}-W_{r,h,\Lambda}^{m^*-2}\Big)  \overline{\mathbb{Z}}_{q 1}{\phi_k}	\\[2mm]
   &\leq \frac{C}{{\mu_k}^\sigma}(1+r \, \delta_{q 2}) \, \|{\phi_k}\|_{{*,k}}.
		\end{align*}
		On the other hand, there holds
		\begin{align*}
			\sum_{j=1}^k \bigg < \big(\, U_{x^{+}_{k,j,r},\Lambda}^{m^*-2}\overline{\mathbb{Z}}_{\ell j}+U_{x^{-}_{k,j,r},\Lambda}^{m^*-2}\underline{\mathbb{Z}}_{\ell j}\,\Big) ,\overline{\mathbb{Z}}_{q 1} \bigg>
			= \bar{c}_{\ell}\delta_{\ell q} (1+\delta_{q2} r^2)+o(1), \quad {\mbox {as}} \quad k \to \infty.
		\end{align*}
		Note that
		\begin{eqnarray*}
			\big< U_{x^{+}_{k,1,r},\Lambda}^{m^*-2}\overline{\mathbb{Z}}_{\ell 1}  , \overline{\mathbb{Z}}_{q 1} \big> =\left\{\begin{array}{rcl}  0,\qquad \text{if}\quad \ell \neq q,
				&&\\[2mm]
				\space\\[2mm]
				\bar{c}_q (1+\delta_{q2} r^2),\qquad \text{if} \quad\ell=q, \end{array}\right.
		\end{eqnarray*}
		for some constant  $\bar{c}_q >0 $.
		Then we can get
		\begin{align} \label{estimate cl}
			c_{\ell}=  \frac{1+r\delta_{\ell2}}{1+r^2\delta_{\ell2} } O\Bigl(\frac 1{{\mu_k}^\sigma}\|{\phi_k}\|_{{*,k}}+\|f_k\|_{{**,k}} \Bigr)= o(1), \quad {\mbox {as}} \quad k \to \infty.
		\end{align}
		Combining \eqref{J123}-\eqref{J3} and \eqref{estimate cl}, we have
		\begin{equation}
			\label{g8}
			\begin{split}
				|{\phi_k}| \le \Bigl(&\,\|f_k\|_{{**,k}}  \sum_{j=1}^k \Big[\frac1{(1+|y-x^{+}_{k,j,r}|)^{\frac{N-2m}{2}+\tau}}+\frac1{(1+|y-x^{-}_{k,j,r}|)^{\frac{N-2m}{2}+\tau}}\Big]
				\\[2mm]
				&+{\sum_{j=1}^k \Big[\frac1{(1+|y-x^{+}_{k,j,r}|)^{\frac{N-2m}{2}+\tau+\sigma}}+\frac1{(1+|y-x^{-}_{k,j,r}|)^{\frac{N-2m}{2}+\tau+\sigma}}\Big]}\, \Bigr).
			\end{split}
		\end{equation}
		
		Since  $\|{\phi_k}\|_{{*,k}}= 1 $, we obtain from \eqref{g8} that there exist some positive constants  $ \bar{R}, \delta_1 $ such that
		\begin{align} \label{bound1}
			\|\phi_k\|_{L^\infty(B_{\bar{R}}(\overline{x}_l))}  \ge \delta_1> 0,
		\end{align}
		for some  $ l\in \{1,2, \cdots, k\}  $. But $\tilde \phi_k(y)
		= \phi_k(y-x^{+}_{k,j,r})  $  converges uniformly in any compact set to a solution $v$ of
		\begin{equation}\label{reA1}
			(-\Delta)^m v-(m^*-1) U_{0,\Lambda}^{m^*-2} v=0,\quad \text{in}\;\R^N,
		\end{equation}
		for some  $\Lambda\in [L_1,L_2] $  and $v$ is perpendicular to the kernel of \eqref{reA1}. So $v =0$. This is a contradiction to \eqref{bound1}.
		
	\end{proof}

	For the linearized problem \eqref{lin}, we have the following existence, uniqueness results. Furthermore, we can give the estimates of  $ \phi_k $ and  $c_\ell, \ell=1,2,3 $. From Lemma 2.1, using the same argument as in the proof of Proposition 2.2 in \cite{duan-musso-wei} , we can proof the following proposition:
	
	\medskip
	\begin{proposition}\label{p1}
		There exist  $k_0>0  $ and a constant  $C>0 $ such
		that for all  $k\ge k_0 $ and all  $f_k\in L^{\infty}(\R^N) $, problem
		$(\ref{lin}) $ has a unique solution  $\phi_k\equiv {\bf L}_k(f_k) $. Besides,
		\begin{equation}\label{Le}
			\Vert \phi_k\Vert_{{*,k}}\leq C\|f_k\|_{{**,k}},\qquad
			|c_{\ell}|\leq  \frac{C} {1+\delta_{\ell 2} r}\|f_k\|_{{**,k}}, \quad \ell=1,2,3.
		\end{equation}
	\end{proposition}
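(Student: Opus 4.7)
The strategy is to leverage Lemma \ref{lem2.1} to upgrade the qualitative statement ``$\|f_k\|_{**,k}\to 0 \Rightarrow \|\phi_k\|_{*,k}\to 0$'' into a quantitative a priori bound, then invoke Fredholm alternative to obtain existence and uniqueness, and finally extract the coefficients $c_\ell$ by testing against the kernel elements.

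\textbf{Step 1 (A priori estimate).} I would prove $\|\phi_k\|_{*,k}\le C\|f_k\|_{**,k}$ by contradiction. Suppose there exist sequences $(r_k,h_k,\Lambda_k)\in\mathscr{D}_k$, $f_k$, and solutions $\phi_k$ of \eqref{lin} with $\|\phi_k\|_{*,k} > k\,\|f_k\|_{**,k}$. After normalizing so that $\|\phi_k\|_{*,k}=1$, we get $\|f_k\|_{**,k}\to 0$. By Lemma \ref{lem2.1}, $\|\phi_k\|_{*,k}\to 0$, contradicting the normalization. This yields the first inequality in \eqref{Le}.

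\textbf{Step 2 (Existence and uniqueness).} The linearized problem \eqref{lin} can be recast as a functional equation in $\mathbb{E}$ (viewed as a closed subspace of $D^{m,2}(\mathbb{R}^N)$). Using the Riesz representation theorem associated with the inner product $\int (-\Delta)^{m/2}\phi\,(-\Delta)^{m/2}\psi$ and the orthogonal projection $P_k$ onto $\mathbb{E}$, one writes \eqref{lin} as
\begin{equation*}
\phi \;=\; P_k\bigl(\mathcal{K}\phi\bigr) \;+\; P_k(\mathcal{G} f_k),
\end{equation*}
where $\mathcal{K}\phi$ is the Riesz representative of $\psi\mapsto (m^*-1)\int K(|y|/\mu_k) W_{r,h,\Lambda}^{m^*-2}\phi\,\psi$ and $\mathcal{G}f_k$ is the Riesz representative of $f_k$. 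Since $W_{r,h,\Lambda}^{m^*-2}$ decays at infinity and the Sobolev embedding $D^{m,2}\hookrightarrow L^{m^*}$ together with standard Rellich-type compactness arguments make $\mathcal{K}:\mathbb{E}\to\mathbb{E}$ compact, the Fredholm alternative applies. The injectivity of $I-P_k\mathcal{K}$ on $\mathbb{E}$ follows from Step 1 applied with $f_k\equiv 0$, so the operator is an isomorphism and a unique $\phi_k={\bf L}_k(f_k)$ exists.

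\textbf{Step 3 (Coefficient estimates).} To estimate $c_\ell$, I would reuse the identity \eqref{equationofcl} obtained by pairing \eqref{lin} with $\overline{\mathbb{Z}}_{q1}$. By the symmetry properties of $W_{r,h,\Lambda}$ and the decay estimates \eqref{estimatekernel}, the diagonal sum
\begin{equation*}
\sum_{j=1}^{k}\Bigl\langle U_{x^{+}_{k,j,r},\Lambda}^{m^*-2}\overline{\mathbb{Z}}_{\ell j}+U_{x^{-}_{k,j,r},\Lambda}^{m^*-2}\underline{\mathbb{Z}}_{\ell j},\,\overline{\mathbb{Z}}_{q1}\Bigr\rangle \;=\;\bar c_q\,(1+\delta_{q2}r^2)\,\delta_{\ell q}+o(1),
\end{equation*}
while the arguments carried out at the end of the proof of Lemma \ref{lem2.1} give
\begin{equation*}
\bigl|\langle f_k,\overline{\mathbb{Z}}_{q1}\rangle\bigr|\le C(1+\delta_{q2}r)\|f_k\|_{**,k},\qquad \bigl|\langle [(-\Delta)^m-(m^*-1)K(|\cdot|/\mu_k)W_{r,h,\Lambda}^{m^*-2}]\phi_k,\overline{\mathbb{Z}}_{q1}\rangle\bigr|\le\frac{C(1+\delta_{q2}r)}{\mu_k^\sigma}\|\phi_k\|_{*,k}.
\end{equation*}
Solving the linear system and using $\|\phi_k\|_{*,k}\le C\|f_k\|_{**,k}$ from Step 1, one obtains $|c_\ell|\le C(1+\delta_{\ell 2}r)^{-1}\|f_k\|_{**,k}$, as claimed.

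\textbf{Main obstacle.} The delicate point is the asymmetry in the weight $(1+\delta_{\ell 2}r)$ appearing in the bound on $c_\ell$: since $\overline{\mathbb{Z}}_{2j}=\partial_h U_{x^+_{k,j,r},\Lambda}$ carries an extra factor $r$ through the chain rule (reflected in \eqref{estimatekernel}), the self-pairing on the diagonal scales like $1+r^2$ while the pairing with $f_k$ scales like $1+r$. Tracking this $r$-dependence consistently through the linear system for $c_1,c_2,c_3$ is what produces the sharp weight in \eqref{Le}, and it is this bookkeeping — rather than the Fredholm abstract setup — that requires the most care.
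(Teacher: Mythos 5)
Your proposal is correct and follows essentially the same route as the paper's (commented-out) proof and its cited model in \cite{duan-musso-wei}: a contradiction argument built on Lemma~\ref{lem2.1} yields the a priori bound $\|\phi_k\|_{*,k}\le C\|f_k\|_{**,k}$, Fredholm's alternative (injectivity coming from the homogeneous case of that bound) gives existence and uniqueness, and the $c_\ell$ estimate is extracted from \eqref{equationofcl} exactly as in \eqref{estimate cl}. The only minor point worth tightening is in Step~1: to justify applying Lemma~\ref{lem2.1} one should explicitly pass to a subsequence $k_n\to\infty$ along which the ratio $\|\phi_{k_n}\|_{*,k_n}/\|f_{k_n}\|_{**,k_n}$ diverges, but this is the standard formalization and does not affect the substance of the argument.
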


	\medskip

	Next rewrite problem \eqref{lin} as
	\begin{align}\label{re1}
		\begin{cases}
			(-\Delta)^m \phi_k
			-(m^*-1)K({{\mu_k}^{-1}}|y|)
			W_{r,h,\Lambda}^{m^*-2}\phi_k\\
            \qquad\qquad\qquad\qquad= {{\bf N}}(\phi_k)+{\bf l}_k
			+\sum\limits_{j=1}^k\sum\limits_{\ell=1}^3
			\Big(\, {c_\ell}U_{x^{+}_{k,j,r},\Lambda}^{m^*-2}\overline{\mathbb{Z}}_{\ell j}+{c_\ell}U_{x^{-}_{k,j,r},\Lambda}^{m^*-2}\underline{\mathbb{Z}}_{\ell j}\,\Big)\, \; \text{in}\;
			\R^N,\\[2mm]
			\phi_k \in  \mathbb{E},
		\end{cases}
	\end{align}
	where
	\begin{equation*}
		{{\bf N}}(\phi_k)=K\Big(\frac{|y|}{{\mu_k}}\Big)\Big[\bigl(W_{r,h,\Lambda}+\phi_k \bigr)^{m^*-1}-W_{r,h,\Lambda}^{m^*-1}-(m^*-1)W_{r,h,\Lambda}^{m^*-2}\phi_k\Big],
	\end{equation*}
	and
	\begin{equation*}
		{\bf l}_k=K\Big(\frac{|y|}{{\mu_k}}\Big) W_{r,h,\Lambda}^{m^*-1}-\sum_{j=1}^k \Big(\, U_{x^{+}_{k,j,r},\Lambda}^{m^*-1}+U_{x^{-}_{k,j,r},\Lambda}^{m^*-1} \,\Big).
	\end{equation*}

	In the following, we will use the Contraction Mapping Principle to show that problem \eqref{re1} has a unique solution when $\|\phi_k\|_{{*,k}} $ is small enough. For this purposes, we will first give the estimate  of  ${{\bf N}}(\phi_k) $ and  $ {\bf l}_k $.

	\medskip
	\begin{lemma} \label{Lemma2.3}
		Suppose  $ N\ge 2m+3  $. There exists $C>0$ such that
		\[
		\|{\bf N}(\phi_k)\|_{{**,k}}\le C\|\phi_k\|_{{*,k}}^{\min\{m^*-1, 2\}},
		\]
		for all $\phi_k \in \mathbb{E}$.
	\end{lemma}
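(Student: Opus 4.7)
The plan is to obtain a pointwise Taylor-type bound on ${\bf N}(\phi_k)$ and then convert it to the $\|\cdot\|_{**,k}$ norm via the $\|\cdot\|_{*,k}$ control on $\phi_k$.

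Since $K(|y|/\mu_k)$ is uniformly bounded by $(\mathbf K)$, the elementary inequalities
\[
\bigl|(a+b)^{p}-a^{p}-p\,a^{p-1}b\bigr|\;\le\;
\begin{cases} C\bigl(a^{p-2}b^{2}+|b|^{p}\bigr),&p\ge 2,\\[2pt] C\,|b|^{p},&1<p<2,\end{cases}
\]
applied with $a=W_{r,h,\Lambda}(y)$, $b=\phi_{k}(y)$, $p=m^{*}-1$ yield the pointwise bound
\[
|{\bf N}(\phi_{k})(y)|\;\le\;C\Bigl(W_{r,h,\Lambda}^{m^{*}-3}(y)\,\phi_{k}^{2}(y)\,\mathbf 1_{\{m^{*}\ge 3\}}+|\phi_{k}(y)|^{m^{*}-1}\Bigr),
\]
which handles both regimes uniformly. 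Recall that $m^{*}\ge 3 \iff N\le 6m$, so the first term is only present in low dimensions and there $m^{*}-3\ge 0$, making $W_{r,h,\Lambda}^{m^{*}-3}$ well defined.

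I then use the definition of $\|\cdot\|_{*,k}$ to get $|\phi_k(y)|\le \|\phi_k\|_{*,k}\,S(y)$, where
\[
S(y):=\sum_{j=1}^{k}\Bigl[(1+|y-x^{+}_{k,j,r}|)^{-\frac{N-2m}{2}-\tau}+(1+|y-x^{-}_{k,j,r}|)^{-\frac{N-2m}{2}-\tau}\Bigr].
\]
The proof then reduces to the pointwise comparison
\[
W_{r,h,\Lambda}^{m^{*}-3}(y)\,S(y)^{2}+S(y)^{m^{*}-1}\;\le\;C\sum_{j=1}^{k}\Bigl[(1+|y-x^{+}_{k,j,r}|)^{-\frac{N+2m}{2}-\tau}+(1+|y-x^{-}_{k,j,r}|)^{-\frac{N+2m}{2}-\tau}\Bigr],
\]
whose right-hand side is exactly the weight defining $\|\cdot\|_{**,k}$. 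The exponent arithmetic works out: $(m^{*}-1)(\tfrac{N-2m}{2}+\tau)=\tfrac{N+2m}{2}+(m^{*}-1)\tau\ge \tfrac{N+2m}{2}+\tau$, and $(m^{*}-3)\tfrac{N-2m}{2}+2(\tfrac{N-2m}{2}+\tau)=\tfrac{N+2m}{2}+2\tau$. Taking the supremum over $y$ and observing that $\|\phi_k\|_{*,k}^{m^{*}-1}\le \|\phi_k\|_{*,k}^{2}$ when $p\ge 2$ and the norm is small gives $\|{\bf N}(\phi_k)\|_{**,k}\le C\|\phi_k\|_{*,k}^{\min\{p,2\}}$, as claimed.

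The main technical obstacle is the \emph{sum-collapse} step: bounding $\bigl(\sum_j a_j\bigr)^p\le C\sum_j a_j^p$ for $p>1$ fails in general but holds in our setting because the centers $x^{\pm}_{k,j,r}$ are mutually separated by distances tending to infinity with $k$ (on the order of $\mu_k/k$ along the polygon and $2rh$ across the equator), so for any $y$ the sum $S(y)$ is controlled up to a universal constant by its single nearest-center term. This is precisely the mechanism used in the appendix summation estimates (Lemma \ref{laa3}, Lemma \ref{lemb2}) already invoked in the proof of Lemma \ref{lem2.1}, so those tools will close the argument.
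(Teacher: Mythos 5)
Your Taylor decomposition and the plan to convert the pointwise bound to the $\|\cdot\|_{*,k}$ and $\|\cdot\|_{**,k}$ weights is the correct framework and is what the proof in \cite{guo2} (to which this paper defers) does. However, the central ``sum-collapse'' step is justified incorrectly, and that is exactly where the work of the lemma lies.

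You claim that, because the centers are mutually far apart, ``for any $y$ the sum $S(y)$ is controlled up to a universal constant by its single nearest-center term.'' This is false in the regime where $y$ is roughly equidistant from all $2k$ centers --- for instance $y=0$, or any $|y|\gtrsim\mu_k$. There all $2k$ terms of $S(y)$ are comparable, so $S(y)$ is of order $k$ times the nearest-center term, and the collapse $\big(\sum_j a_j\big)^{p}\le C\sum_j a_j^{p}$ fails by a factor $k^{p-1}$. This is not cosmetic: in that regime one finds
\[
\frac{S(y)^{m^{*}-1}}{\sum_j b_j(y)}\sim k^{m^{*}-2}\,(1+|y-x^{+}_{k,1,r}|)^{-(m^{*}-2)\tau},
\]
where $b_j$ denotes the $\|\cdot\|_{**,k}$ weight, and since $|y-x^{+}_{k,1,r}|\gtrsim\mu_k=k^{\frac{N-2m}{N-2m-l}}$ this is bounded \emph{if and only if} $\tau\ge\frac{N-2m-l}{N-2m}$ --- exactly the lower endpoint of the interval from which $\tau$ is chosen. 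If $\tau$ were any smaller the pointwise comparison you wrote down would actually be false at $y=0$. In the complementary regime (near a center) the correct tool is the fractional-exponent peeling with Lemma~\ref{lemb1}, as in the treatment of $M_{13}$ in the proof of Lemma~\ref{Lemma2.4}, combined with $\sum_{j\ne 1}|x^{+}_{k,1,r}-x^{\pm}_{k,j,r}|^{-\alpha_1}\lesssim(k/\mu_k)^{\alpha_1}$ for $\alpha_1\in(1,\alpha)$. Your write-up never invokes $\tau$ or $\mu_k$, and the lemmas you cite as closing the argument do not do so: Lemma~\ref{lemb2} is a Riesz-potential decay bound, and Lemma~\ref{laa3} concerns the integral of $W^{m^{*}-2}S$ against the Riesz kernel, not the pointwise powers $S^{m^{*}-1}$ or $W^{m^{*}-3}S^{2}$. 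So the proposal has a genuine gap at its main step.

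A secondary arithmetic slip: each bubble decays like $(1+|y-x^{\pm}_{k,j,r}|)^{-(N-2m)}$, not $(1+|y-x^{\pm}_{k,j,r}|)^{-\frac{N-2m}{2}}$, so the exponent for the $W^{m^{*}-3}S^{2}$ term is $(m^{*}-2)(N-2m)+2\tau=4m+2\tau$, not $\frac{N+2m}{2}+2\tau$. The inequality $4m+2\tau\ge\frac{N+2m}{2}+\tau$ still holds precisely when $m^{*}\ge 3$, i.e.\ $N\le 6m$, which is when this term is present, so the conclusion happens to survive the slip.
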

	\begin{proof}
		The proof is similar to that of Lemma 2.4 in \cite{guo2}. Here we omit it.
	\end{proof}

	\medskip
	Next, we estimate ${\bf l}_k$.
	\begin{lemma} \label{Lemma2.4}
		Suppose $K(|y|)$ satisfies ${(\bf K)}$ and $N\ge 2m+3$,  $(r,h,\Lambda) \in{{\mathscr D}_k}$.
		There exists $k_0 $ and $C>0$ such that for all $k \geq k_0$
		\begin{align}  \label{estimateforlk}
			\|\, {\bf l}_k \,\|_{{**,k}}  \le  C\max\Bigg\{ \Big(\frac k{\mu_k} \Big)^{
				\frac{N+2m}{2}-\frac{N-2m-l}{N-2m}- \epsilon_1 },  \bigg(\frac1{\mu_k}\bigg)^l \Bigg\},
		\end{align}
		where  $\epsilon_1$ is small constant.
		
	\end{lemma}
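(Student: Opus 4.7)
The plan is to split $\mathbf{l}_k$ into a \emph{curvature error} and a \emph{bubble interaction error}, and estimate each separately in the $\|\cdot\|_{**,k}$ norm. Write
\begin{equation*}
\mathbf{l}_k(y) \,=\, \underbrace{\Big(K\big(\tfrac{|y|}{\mu_k}\big)-1\Big)\,W_{r,h,\Lambda}^{m^*-1}}_{A(y)} \;+\; \underbrace{W_{r,h,\Lambda}^{m^*-1}-\sum_{j=1}^{k}\Big(U_{x^{+}_{k,j,r},\Lambda}^{m^*-1}+U_{x^{-}_{k,j,r},\Lambda}^{m^*-1}\Big)}_{B(y)}.
\end{equation*}
The first summand contributes the term $\mu_k^{-l}$ in the stated bound and the second summand contributes the term of order $(k/\mu_k)^{(N+2m)/2-(N-2m-l)/(N-2m)-\epsilon_1}$.

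For $A$, I would use the hypothesis $(\mathbf K)$ together with the fact that all bubbles are concentrated on the sphere $|x|=r$ with $|r-\mu_k|\leq\hat{\sigma}$ (from the definition of $\mathscr{D}_k$), so that $|y|/\mu_k$ is close to $r_0=1$ precisely in the region where $W_{r,h,\Lambda}$ is sizeable. Split $\mathbb{R}^N$ into a near region $\{|y|-\mu_k|\le\delta\mu_k\}$ and its complement. In the near region, the expansion in $(\mathbf K)$ gives $|K(|y|/\mu_k)-1|\le C(\mu_k^{-l}+\mu_k^{-l}|y-x^\pm_{k,j,r}|^l)$ after writing $|y|/\mu_k-1$ as $(r-\mu_k)/\mu_k$ plus a local correction; in the complement, $K$ is merely bounded but $W^{m^*-1}$ already decays fast enough. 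Pairing with the decay $U_{x^{\pm}_{k,j,r},\Lambda}(y)\le C(1+|y-x^{\pm}_{k,j,r}|)^{-(N-2m)}$ and comparing with the weight in $\|\cdot\|_{**,k}$ produces $\|A\|_{**,k}\le C\mu_k^{-l}$.

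For $B$, by the symmetry encoded in $H_{s,k}$ it suffices to estimate in $\Omega^+_1$. In this region the dominant bubble is $U_1^{+}:=U_{x^+_{k,1,r},\Lambda}$, and setting $Q:=\sum_{(i,\epsilon)\neq(1,+)}U^{\epsilon}_{k,i,r,\Lambda}$ I would apply the pointwise elementary inequality
\begin{equation*}
\bigl|(U_1^++Q)^{m^*-1}-(U_1^+)^{m^*-1}-(m^*-1)(U_1^+)^{m^*-2}Q\bigr|\le C\bigl(Q^{m^*-1}+(U_1^+)^{\max(0,m^*-3)}Q^{\min(2,m^*-1)}\bigr).
\end{equation*}
Then in $\Omega_1^+$,
\begin{equation*}
B=(m^*-1)(U_1^+)^{m^*-2}Q-\sum_{(i,\epsilon)\neq(1,+)}(U^\epsilon_{k,i,r,\Lambda})^{m^*-1}+O\bigl(Q^{m^*-1}\bigr).
\end{equation*}
The two leading sums are estimated by invoking the bubble-distance scales: the in-plane neighbour separation $|x^+_{k,1,r}-x^+_{k,i,r}|\sim r|i-1|/k\sim\mu_k|i-1|/k$ and the tower separation $|x^+_{k,j,r}-x^-_{k,j,r}|=2rh\sim\mu_k\lambda_k$. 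Applying the summation lemma \ref{laa3} (and the other appendix estimates) to sums of the form $(U_1^+)^{m^*-2}U^\epsilon_i$ and $(U^\epsilon_i)^{m^*-1}$ in $\Omega^+_1$, absorbing a factor $(1+|y-x^+_{k,1,r}|)^{-(N+2m)/2-\tau}$, and using the precise value of $\tau\in((N-2m-l)/(N-2m),(N-2m-l)/(N-2m)+\epsilon_1)$ converts geometric interaction decay into the asserted exponent $(N+2m)/2-(N-2m-l)/(N-2m)-\epsilon_1$ of $k/\mu_k$.

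The main obstacle is the accounting in $\Omega_1^+$: the interaction $(U_1^+)^{m^*-2}Q$ must be expanded uniformly with respect to the ``next closest'' bubble, which can be either the angular neighbour $U^+_2$ (at scale $\mu_k/k$) or the tower partner $U_1^-$ (at the longer scale $\mu_k\lambda_k$ since $\lambda_k>1/k$). One must therefore subdivide $\Omega_1^+$ accordingly and verify that in every subregion the combined weight-corrected bound is dominated by the one term $(k/\mu_k)^{(N+2m)/2-(N-2m-l)/(N-2m)-\epsilon_1}$; the higher-order remainders $Q^{m^*-1}$ and the tower/angular cross-terms are handled by the same summation lemmas but give a strictly smaller power of $k/\mu_k$, hence are absorbed. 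Combining the two bounds on $A$ and $B$ and taking the maximum yields \eqref{estimateforlk}.
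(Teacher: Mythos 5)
Your decomposition $\mathbf{l}_k = A + B$ with $A = (K-1)W^{m^*-1}$ and $B = W^{m^*-1} - \sum U^{m^*-1}$ is a rearrangement of the paper's split $M_1 = K\cdot B$, $M_2 = (K-1)\sum U^{m^*-1}$; they agree modulo the harmless cross-piece $(K-1)B$, and the estimation strategy (bubble-interaction terms via the lemmas of Appendix B, curvature error via the local expansion of $K$ and a near/far dichotomy in $|y|/\mu_k$) is the same as the paper's. Two small cautions. First, your claimed bound $\|A\|_{**,k}\le C\mu_k^{-l}$ is too strong as stated: since $A = M_2 + (K-1)B$, the $(K-1)B$ piece is \emph{a priori} only controlled by $\|B\|_{**,k}$ (which contributes the other term of the $\max$), and even for the $M_2$ part, when $l > \tfrac{N+2m}{2}-\tau$ the pointwise factor $\mu_k^{-l}|y-x^+_{k,1,r}|^l$ is not absorbed by the weight and one only obtains $\mu_k^{-\min\{l,\,(N+2m)/2-\tau\}}$; both of these larger contributions are in any case dominated by the first argument of the $\max$, so the conclusion survives, but the intermediate claim should be corrected to the $\max$ bound. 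Second, the subdivision of $\Omega_1^+$ according to whether the nearest neighbour is angular or the tower partner is unnecessary: since $\lambda_k \gg 1/k$, the angular neighbour $x^+_{k,2,r}$ is always closer than $x^-_{k,1,r}$, and the paper simply estimates the two families $\sum_{j\ge 2}U^+_j$ and $\sum_j U^-_j$ separately with Lemma \ref{lemb1} / Lemma \ref{express7} without partitioning $\Omega_1^+$.
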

	
	\begin{proof}
		We can rewrite ${\bf l}_k$ as
		\[
		\begin{split}
			{\bf l}_k
			\,=\,& K\Big(\frac{|y|}{{\mu_k}}\Big) \Big[\, W_{r,h,\Lambda}^{m^*-1}-\sum_{j=1}^k \Big(\, U_{x^{+}_{k,j,r},\Lambda}^{m^*-1}+U_{x^{-}_{k,j,r},\Lambda}^{m^*-1} \,\Big)  \,\Big]
			\\[2mm]
			&+\sum_{j=1}^k  \Big[\ K\Big(\frac{|y|}{{\mu_k}}\Big)-1  \,\Big]  \Big(\, U_{x^{+}_{k,j,r},\Lambda}^{m^*-1}+U_{x^{-}_{k,j,r},\Lambda}^{m^*-1} \,\Big)
			:=\, M_1+M_2.
		\end{split}
		\]
		
		Assume that  $ y \in \Omega_1^+ $, then we get
		\begin{align*}
			M_1
			&=  K\Big(\frac{|y|}{{\mu_k}}\Big) \Big[\, \Big(\, \sum_{j=1}^k  U_{x^{+}_{k,j,r},\Lambda}+U_{x^{-}_{k,j,r},\Lambda}  \,\Big)^{m^*-1}-\sum_{j=1}^k \Big(\, U_{x^{+}_{k,j,r},\Lambda}^{m^*-1}+U_{x^{-}_{k,j,r},\Lambda}^{m^*-1} \,\Big)  \,\Big]
			\nonumber \\[2mm]
			& \leq  C K\Big(\frac{|y|}{{\mu_k}}\Big)  \Big[U_{x^{+}_{k,1,r},\Lambda}^{m^*-2}\, \Big(\sum_{j=2}^k  U_{x^{+}_{k,j,r},\Lambda}+\sum_{j=1}^k U_{x^{-}_{k,j,r},\Lambda}\Big)+\Big(\sum_{j=2}^k  U_{x^{+}_{k,j,r},\Lambda}+\sum_{j=1}^k U_{x^{-}_{k,j,r},\Lambda}\Big)^{m^*-1}\Big].
		\end{align*}
		Thus, we have
		\begin{align*}
			M_1 \,\le\, & \frac C{(1+|y-x^{+}_{k,1,r}|)^{4m}} \sum_{j=2}^k \frac1{(1+|y-x^{+}_{k,j,r}|)^{N-2m}}
			+\frac C{(1+|y-x^{+}_{k,1,r}|)^{4m}} \sum_{j=1}^k \frac1{(1+|y-x^{-}_{k,j,r}|)^{N-2m}}\\[2mm]
			&+C\Bigl(\sum_{j=2}^k \frac1{(1+|y-x^{+}_{k,j,r}|)^{N-2m}}\Bigr)^{m^*-1}\\[2mm]
			:=&\,M_{11}+M_{12}+M_{13}.
		\end{align*}
		
		For $M_{11}$, when $4m \ge \frac{N+2m}{2} +\tau$, we have
		\begin{align}\label{S11>}
			M_{11}\le\, & C\frac1{(1+|y-x^{+}_{k,1,r}|)^{\frac {N+2m}2+ \tau}} \sum_{j=2}^k \frac1{|x^{+}_{k,j,r}-x^{+}_{k,1,r}|^{N-2m}}  \nonumber
			\\[2mm]
			\,\le\, & C\frac1{(1+|y-x^{+}_{k,1,r}|)^{\frac {N+2m}2+ \tau}} \Big(\frac k{\mu_k} \Big)^{N-2m}	\nonumber
			\\[2mm]
			\,\le\, & C\frac1{(1+|y-x^{+}_{k,1,r}|)^{\frac{N+2m}2+\tau}}\Big(\frac k{\mu_k} \Big)^{
				\frac{N+2m}{2}-\frac{N-2m-l}{N-2m}- \epsilon_1 }.
		\end{align}
		
		When $4m < \frac{N+2m}{2} +\tau$, similar to the proof of Lemma \ref{b.0}, for any  $  1 <  \alpha_1  < N-2m $, we have
		\begin{align*}
			\sum_{j=2}^k \frac1{(1+|y-x^{+}_{k,j,r}|)^{N-2m}}
			\le \frac{C} {(1+|y-x^{+}_{k,1,r}|)^{N-2m-\alpha_1}}\,\sum_{j=2}^k \, \frac1{|x^{+}_{k,j,r}-x^{+}_{k,1,r}|^{\alpha_1}}.
		\end{align*}
		Since  $ \tau  \in ( \frac{N-2m-l}{N-2m},  \frac{N-2m-l}{N-2m}+ \epsilon_1 ) $, we can choose $\alpha_1$ satisfies
		\begin{equation*}
			\frac{N+2m}{2}-\frac{N-2m-l}{N-2m}- \epsilon_1<\alpha_1   =\frac{N+2m}{2}-\tau<N-2m.
		\end{equation*}
		Then
		\begin{align} \label{S11n>5}
			M_{11}			
			& \le  \frac{C} {(1+|y-x^{+}_{k,1,r}|)^{N+2m-\alpha_1}}\,\sum_{j=2}^k \, \frac1{|x^{+}_{k,j,r}-x^{+}_{k,1,r}|^{\alpha_1}} \nonumber
			\\[2mm]
			& \le  \frac{C} {(1+|y-x^{+}_{k,1,r}|)^{N+2m-\alpha_1}} \, \Big(\frac{k}{{\mu_k}\,\sqrt{1-h^2} }\Big)^{\alpha_1}\nonumber
			\\[2mm]
			&\le C\frac1{(1+|y-x^{+}_{k,1,r}|)^{\frac{N+2m}2+\tau}}\Big(\frac k{\mu_k} \Big)^{
				\frac{N+2m}{2}-\frac{N-2m-l}{N-2m}- \epsilon_1 }.
		\end{align}
		Thus, we have
		\begin{equation}\label{S11}
			\|M_{11}\|_{{**,k}}\le C\Big(\frac k{\mu_k} \Big)^{
				\frac{N+2m}{2}-\frac{N-2m-l}{N-2m}- \epsilon_1 }.
		\end{equation}
		Similarly to $M_{11}$, for $M_{12}$, we can obtain that
		\begin{equation}\label{S12}
			\|M_{12}\|_{{**,k}}\le C\Big(\frac k{\mu_k} \Big)^{
				\frac{N+2m}{2}-\frac{N-2m-l}{N-2m}- \epsilon_1 }.
		\end{equation}
		Next, we consider  $M_{13}$. For  $ y \in \Omega_1^+$,
		\begin{align*}
			\sum_{j=2}^k\, \frac1{(1+|y-x^{+}_{k,j,r}|)^{N-2m}}
			& \le  \sum_{j=2}^k \frac1{(1+|y-x^{+}_{k,1,r}|)^{\frac{N-2m}{2}}} \, \frac1{(1+|y-x^{+}_{k,j,r}|)^{\frac{N-2m}{2}}}
			\nonumber\\[2mm]
			& \le  \sum_{j=2}^k\,\frac{C}{|x^{+}_{k,j,r}-x^{+}_{k,1,r}|^{\frac{N-2m}{2}-\frac{N-2m}{N+2m}\tau}}
			\frac 1{(1+|y-x^{+}_{k,1,r}|)^{\frac{N-2m}{2}+\frac{N-2m}{N+2m}\tau}}
			\\[2mm]
			& \le C\Big(\,\frac k {{\mu_k}\sqrt{1-h^2}}\,\Big)^{\frac{N-2m}{2}-\frac{N-2m}{N+2m}\tau} \frac 1{(1+|y-x^{+}_{k,1,r}|)^{\frac{N-2m}{2}+\frac{N-2m}{N+2m}\tau}}.
			\nonumber
		\end{align*}
		Thus we have
		\begin{equation}\label{S13}
			\begin{split}
				M_{13}
				& \le \, \Big(\, \frac k {{\mu_k}\sqrt{1-h^2}}  \,\Big)^{\frac{N+2m}{2}-\tau} \frac{C}{(1+|y-x^{+}_{k,1,r}|)^{\frac{N+2m}{2}+\tau}}
				\\[2mm]
				& \le\frac{C}{(1+|y-x^{+}_{k,1,r}|)^{\frac{N+2m}{2}+\tau}}  \Big(\frac k{\mu_k} \Big)^{
					\frac{N+2m}{2}-\frac{N-2m-l}{N-2m}- \epsilon_1}.
			\end{split}
		\end{equation}
		Combining \eqref{S11}, \eqref{S12}, \eqref{S13}, we obtain
		\begin{equation}\label{S1}
			\begin{split}
				M_{1}
				& \le \, \Big(\, \frac k {{\mu_k}\sqrt{1-h^2}}  \,\Big)^{\frac{N+2m}{2}-\tau} \frac{C}{(1+|y-x^{+}_{k,1,r}|)^{\frac{N+2m}{2}+\tau}}
				\\[2mm]
				& \le\frac{C}{(1+|y-x^{+}_{k,1,r}|)^{\frac{N+2m}{2}+\tau}}  \Big(\frac k{\mu_k} \Big)^{
					\frac{N+2m}{2}-\frac{N-2m-l}{N-2m}- \epsilon_1}.
			\end{split}
		\end{equation}
		
		We now consider the estimate of $M_2$.  For  $ y\in  \Omega_1^+ $, we have
		\begin{align*}
			M_2 \,\le \,& 2\sum_{j=1}^k  \Big[\ K\Big(\frac{|y|}{{\mu_k}}\Big)-1  \,\Big]  \, U_{x^{+}_{k,j,r},\Lambda}^{m^*-1}
			\\[2mm]
			\,=\,& 2\,U_{x^{+}_{k,1,r},\Lambda}^{m^*-1} \,\Big[K \Big(\frac{|y|}{{\mu_k}}\Big)-1\Big]
			+2\,\sum_{j=2}^k \, U_{x^{+}_{k,j,r},\Lambda}^{m^*-1} \, \Big[K\Big(\frac{|y|}{{\mu_k}}\Big)-1\Big]
			\\[2mm]
			:=\,& M_{21}+M_{22}.
		\end{align*}
		If  $|\frac{|y|}{\mu_k}-1|\ge \delta_1,  $ where  $ \delta> \delta_1> 0  $, then
		\begin{align*}
			|y-x^{+}_{k,1,r}| \ge \big||y |-{\mu_k}\big|\,-\,\big|{\mu_k} -|x^{+}_{k,1,r}|\big| \ge \frac{1}{2} \delta_1 {\mu_k}.
		\end{align*}
		As a result, we get
		\begin{align*}
			U_{x^{+}_{k,1,r},\Lambda}^{m^*-1} \,\Big[K\Big(\frac{|y|}{{\mu_k}}\Big)-1\Big]
			& \le \frac{C}{\big(1+|y-x^{+}_{k,1,r}|\big)^{\frac{N+2m} 2+\tau}}  \frac{1}{{\mu_k}^{\frac{N+2m} 2-\tau}}
			\\[2mm]
			& \le \frac{C}{\big(1+|y-x^{+}_{k,1,r}|\big)^{\frac{N+2m} 2+\tau}}  \Big(\frac k{\mu_k} \Big)^{
				\frac{N+2m}{2}-\frac{N-2m-l}{N-2m}- \epsilon_1 }.
		\end{align*}
		Else if $|\frac{|y|}{\mu_k}-1|\le \delta_1,$ then
		\begin{align*}
			\Big[K\Big(\frac{|y|}{{\mu_k}}\Big)-1\Big]  \le& C\Big|\frac{|y|}{\mu_k}-1\Big|^{l} = \frac{C} {{\mu_k}^{l}}||y|-{\mu_k}|^{l}
			\\[2mm]
			\leq&\frac{C} {{\mu_k}^{l}}\Big[\big||y|-|x^{+}_{k,1,r}|\big|^{l}
			\,+\,\big||x^{+}_{k,1,r}|-{\mu_k}\big|^{l}\Big]
			\\[2mm]
			\leq&\frac{C} {{\mu_k}^{l}}\Big[\big||y|-|x^{+}_{k,1,r}|\big|^{l}
			\,+\,\frac{1}{k^{{\bar\theta l}}}\Big].
		\end{align*}
		If $l\ge \frac{N+2m}{2}-\tau$,
		\begin{equation*}
			\frac{ ||y|-|x^{+}_{k,1,r}||^l }{{\mu_k}^{l}}\frac1{(1+|y-x^{+}_{k,1,r}|)^{N+2m}} \le  \frac C{{\mu_k}^{l}}
			\frac1{(1+|y-x^{+}_{k,1,r}|)^{\frac{N+2m}2+\tau}}.
		\end{equation*}
		If $l<\frac{N+2m}{2}-\tau$,
		
		\[
		\begin{split}
			&
			\frac{ ||y|-|x^{+}_{k,1,r}||^l }{{\mu_k}^{l}}\frac1{(1+|y-x^{+}_{k,1,r}|)^{N+2m}}\\
			= & \frac1{{\mu_k}^{\frac {N+2m}2-\tau}}
			\frac1{(1+|y-x^{+}_{k,1,r}|)^{\frac{N+2m}2+\tau}}
			\frac{ ||y|-|x^{+}_{k,1,r}||^l }{{\mu_k}^{ l -\frac {N+2m}2+\tau}}\frac1{(1+|y-x^{+}_{k,1,r}|)^{\frac{N+2m}2-\tau}}\\
			\le & \frac C{{\mu_k}^{\frac{N+2m}2-\tau}}
			\frac1{(1+|y-x^{+}_{k,1,r}|)^{\frac{N+2m}2+\tau}}
			\frac{ ||y|-|x^{+}_{k,1,r}||^{\frac{N+2m}2-\tau} }{(1+|y-x^{+}_{k,1,r}|)^{\frac{N+2m}2-\tau}}\\
			\le & \frac C{{\mu_k}^{\frac{N+2m}2-\tau}}
			\frac1{(1+|y-x^{+}_{k,1,r}|)^{\frac{N+2m}2+\tau}}.
		\end{split}
		\]

		Thus
		
		\begin{equation}\label{13-l2-5-3}
			U_{x^{+}_{k,1,r},\Lambda}^{m^*-1}\Bigg(
			K\bigg(\frac {|y|}{\mu_k}\bigg)-1\Bigg)
			\le  \frac C{{\mu_k}^{\min\{{\frac{N+2m}2-\tau,l}\}}}
			\frac1{(1+|y-x^{+}_{k,1,r}|)^{\frac{N+2m}2+\tau}}.
		\end{equation}
		As a result,
		\begin{equation} \label{S21}
			M_{21} \le  C\Big(\frac k{\mu_k} \Big)^{\min{
					\{\frac{N+2m}{2}-\frac{N-2m-l}{N-2m}- \epsilon_1 },l(N-2m)\}}
			\frac1{(1+|y-x^{+}_{k,1,r}|)^{\frac{N+2m}2+\tau}}.
		\end{equation}
		On the other hand, it is easy to derive that
		\begin{align} \label{S22}
			M_{22}&\le  C\frac{1}{(1+|y-x^{+}_{k,1,r}|)^{\frac{N+2m}2}}\sum_{j=2}^k \frac{1}{(1+|y-x^{+}_{k,j,r}|)^{\frac{N+2m}2}}
			\nonumber \\[2mm]
			& \le C  \frac{1}{(1+|y-x^{+}_{k,1,r}|)^{\frac{N+2m}2+\tau}} \sum_{j=2}^k \frac{1}{|x^{+}_{k,1,r}-x^{+}_{k,j,r}|^{\frac{N+2m}2-\tau}}
			\nonumber \\[2mm]
			& \le  \frac{C}{(1+|y-x^{+}_{k,1,r}|)^{\frac{N+2m}2+\tau}} \Big(\frac k{\mu_k} \Big)^{
				\frac{N+2m}{2}-\frac{N-2m-l}{N-2m}- \epsilon_1 }.
		\end{align}
		Combining \eqref{S21} with \eqref{S22}, we obtain
		\begin{equation*}
			\|M_2\|_{{**,k}} \le C\Big(\frac k{\mu_k} \Big)^{
				\min\{\frac{N+2m}{2}-\frac{N-2m-l}{N-2m}- \epsilon_1,l(N-2m)\} }.
		\end{equation*}
	\end{proof}

	\medskip
	The  solvability theory  for the  linearized  problem \eqref{re1}  can be provided in the following:
	\begin{proposition} \label{pro3}
		Suppose that  $ K(|y|)$ satisfies  ${\bf H}$ and  $N\ge 2m+3$, $(r,h,\Lambda) \in{{\mathscr S}_k}$. There exists an integer  $k_0$ large enough, such that for all  $k \ge k_0$ problem \eqref{re1} has a unique solution $\phi_k$ which satisfies
		\begin{align} \label{estimateforphik}
			\|\phi_k\|_{{*,k}} \le  C\max\Big\{ \frac1{k^{(\frac l{N-2m-l})(
					\frac{N+2m}{2}-\frac{N-2m-l}{N-2m}- \epsilon_1 )}},  \frac1{k^{(\frac {N-2m}{N-2m-l})l}} \Big\},
		\end{align}
		and
		\begin{align}\label{estimateforc}
			|c_{\ell}|\le   \frac C{(1+\delta_{\ell 2}{\mu_k})}  \max\Big\{ \frac1{k^{(\frac l{N-2m-l})(
					\frac{N+2m}{2}-\frac{N-2m-l}{N-2m}- \epsilon_1 )}},  \frac1{k^{(\frac {N-2m}{N-2m-l})l}} \Big\},   \quad{for} ~ \ell=1,2,3.
		\end{align}
	\end{proposition}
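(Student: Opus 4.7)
The plan is to combine the linear theory of Proposition \ref{p1} with the nonlinear and error estimates of Lemmas \ref{Lemma2.3}--\ref{Lemma2.4} to run a Banach contraction argument. I would first set
\[
\rho_k := C_0\max\Bigl\{ \frac1{k^{(\frac l{N-2m-l})(\frac{N+2m}{2}-\frac{N-2m-l}{N-2m}-\epsilon_1)}},\; \frac1{k^{(\frac{N-2m}{N-2m-l})l}} \Bigr\},
\]
which, after inserting $\mu_k=k^{(N-2m)/(N-2m-l)}$ into the bound of Lemma \ref{Lemma2.4}, is comparable to $\|{\bf l}_k\|_{**,k}$. On the ball $B_{\rho_k}=\{\phi\in\mathbb{E}:\|\phi\|_{*,k}\le \rho_k\}$, I define the operator $\mathcal{A}(\phi):={\bf L}_k({\bf N}(\phi)+{\bf l}_k)$, where ${\bf L}_k$ is the linear inverse provided by Proposition \ref{p1}. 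A fixed point of $\mathcal{A}$ in $B_{\rho_k}$ is precisely a solution of \eqref{re1}.

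To verify $\mathcal{A}(B_{\rho_k})\subset B_{\rho_k}$, I apply the bound \eqref{Le} together with Lemmas \ref{Lemma2.3}--\ref{Lemma2.4}:
\[
\|\mathcal{A}(\phi)\|_{*,k}\le C\bigl(\|{\bf N}(\phi)\|_{**,k}+\|{\bf l}_k\|_{**,k}\bigr)\le C\bigl(\|\phi\|_{*,k}^{\min\{m^*-1,2\}}+\rho_k\bigr).
\]
Since $m^*-1>1$ whenever $N\ge 2m+3$ and $\rho_k\to 0$, the nonlinear term is of strictly higher order than $\rho_k$, so the inclusion holds for $k$ large enough. For the contraction property, I would expand ${\bf N}(\phi_1)-{\bf N}(\phi_2)$ via the mean value theorem (or a pointwise H\"older-type inequality when $m^*-2<1$) and use the same weight manipulations as in the proof of Lemma \ref{Lemma2.3} to get
\[
\|{\bf N}(\phi_1)-{\bf N}(\phi_2)\|_{**,k}\le C\bigl(\|\phi_1\|_{*,k}+\|\phi_2\|_{*,k}\bigr)^{\min\{m^*-2,1\}}\|\phi_1-\phi_2\|_{*,k},
\]
yielding a Lipschitz constant $\le C\rho_k^{\min\{m^*-2,1\}}=o(1)$. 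Banach's fixed point theorem then produces the unique $\phi_k\in B_{\rho_k}$, which gives \eqref{estimateforphik}; the bound \eqref{estimateforc} on the multipliers $c_\ell$ follows immediately from \eqref{Le} applied with $f={\bf N}(\phi_k)+{\bf l}_k$, noting that $\|f\|_{**,k}\le C\rho_k$.

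The main obstacle I anticipate is the contraction estimate in the regime $m^*-2<1$, which occurs for $N>6m$. There, ${\bf N}(\phi_1)-{\bf N}(\phi_2)$ must be bounded through $|a^{m^*-1}-b^{m^*-1}|\le C|a-b|(|a|+|b|)^{m^*-2}$, and one has to verify that the resulting weighted sum of bumps can still be absorbed into the single weight defining $\|\cdot\|_{**,k}$. This is a delicate but routine pointwise computation analogous to the manipulations already carried out in Lemmas \ref{lem2.1} and \ref{Lemma2.4}, and once it is in place the argument mirrors that of Proposition 2.3 in \cite{duan-musso-wei}.
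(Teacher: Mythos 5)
Your proposal is correct and mirrors the paper's own proof almost step for step: the paper likewise defines the ball $\mathcal{A}$ with radius $C\max\{k^{-(\frac{l}{N-2m-l})(\frac{N+2m}{2}-\frac{N-2m-l}{N-2m}-\epsilon_1)},\,k^{-(\frac{N-2m}{N-2m-l})l}\}$, writes the equation as the fixed point problem $\phi_k={\bf L}_k({\bf N}(\phi_k)+{\bf l}_k)$, invokes Lemmas \ref{Lemma2.3} and \ref{Lemma2.4} together with the linear bound \eqref{Le} for the self-map and contraction properties, and applies the Banach fixed point theorem. The only cosmetic difference is that you read off the multiplier bound \eqref{estimateforc} from \eqref{Le} while the paper cites \eqref{estimate cl}; these are equivalent.
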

	
	\begin{proof}
		Recall that $\mu_k=k^{\frac{N-2m}{N-2m-l}} $, we denote
		\begin{align*}
			\mathcal{A}:= \Bigg\{v:  v\in \mathbb{E} \quad\|v\|_{{*,k}} \le   C \max\Big\{ \frac1{k^{(\frac l{N-2m-l})(
					\frac{N+2m}{2}-\frac{N-2m-l}{N-2m}- \epsilon_1 )}},  \frac1{k^{(\frac {N-2m}{N-2m-l})l}} \Big\} \Bigg\}.
		\end{align*}
		From Proposition \ref{p1}, we know that problem \eqref{re1} is equivalent to the following fixed point problem
		\begin{align*}
			\phi_k= {\bf  L}_k \big({{\bf N}}(\phi_k)+{\bf l}_k\big)= : {\bf A}(\phi_k),
		\end{align*}
		where  ${\bf  L}_k$ is the linear bounded operator defined in Proposition \ref{p1}.

		From Lemma \ref{Lemma2.3} and Lemma \ref{Lemma2.4}, we know that
		\begin{align*}
			\|{\bf A}(\phi_k)\|_{{*,k}}  & \le C  \Big(\|{{\bf N}}(\phi_k)\|_{{**,k}}+\|{\bf l}_k \|_{{**,k}}\Big)
			\nonumber \\[2mm]
			& \le\, O(\| \phi_k\|_{{*,k}}^{1+\sigma} )+  \max\Big\{ \frac1{k^{(\frac l{N-2m-l})(
					\frac{N+2m}{2}-\frac{N-2m-l}{N-2m}- \epsilon_1 )}},  \frac1{k^{(\frac {N-2m}{N-2m-l})l}} \Big\}
			\nonumber \\[2mm]
			&  \le    \max\Big\{ \frac1{k^{(\frac l{N-2m-l})(
					\frac{N+2m}{2}-\frac{N-2m-l}{N-2m}- \epsilon_1 )}},  \frac1{k^{(\frac {N-2m}{N-2m-l})l}} \Big\}.
		\end{align*}
		So the operator ${\bf A}$ maps from  $\mathcal{A}$ to $\mathcal{A}$. Furthermore, we can show that ${\bf A}$ is a contraction mapping.
		In fact, for any  $\phi_{k1}, \phi_{k2} \in \mathcal{A}$, we have
		\begin{align*}
			\|{\bf A}(\phi_{k1})-{\bf A}(\phi_{k2}) \|_{{*,k}}  \le C\|{{\bf N}}(\phi_{k1})-{{\bf N}}(\phi_{k2})\|_{{**,k}}.
		\end{align*}
		Since ${{\bf N}}(\phi_k)$ has a power-like behavior with power greater than one, then we can easily get
		\begin{align*}
			\|{\bf A}(\phi_{k1})-{\bf A}(\phi_{k2}) \|_{{*,k}} \le o(1)\| \phi_{k1}-\phi_{k2}\|_{{*,k}}.
		\end{align*}
        It follows from the contraction mapping principle that there is a unique solution $ \phi_k = {\bf A}(\phi_k)$ in $\mathcal{A}$. The estimates for $ c_{\ell}, \ell=1,2,3  $ comes from \eqref{estimate cl}.
	\end{proof}

	\medskip
	
	\section{Proof of Theorem \ref{main1}}  \label{sec4}

	\begin{proposition} \label{pro2.4}
		Let $ \phi_{r,h, \Lambda}$ be a function obtained in Proposition \ref{pro3} and
		\begin{align*}
			F(r,h,\Lambda):= I( W_{r, h, \Lambda}+\phi_{r,h, \Lambda}),
		\end{align*}
  where
  	\begin{eqnarray}\label{I}
		I(v):=   \left\{\begin{array}{rcl}\displaystyle {\frac{1}{2}} \displaystyle {\int _{\mathbb R^N}|\Delta ^\frac{m}{2} v|^2} -\frac{1}{m^*} \int _{{\mathbb {R}^N}}K\Big(\frac{|y|}{\mu_k}\Big)|v|^{m^*}   ,\qquad \text{if }m \text{ is even},
			&&\\[2mm]
			\space\\[2mm]
		\displaystyle {\frac{1}{2}} \displaystyle {\int _{\mathbb R^N}|\nabla\Delta ^\frac{m-1}{2} v|^2} -\frac{1}{m^*} \int _{\mathbb R^N}K\Big(\frac{|y|}{\mu_k}\Big)|v|^{m^*}   ,\qquad \text{if } m \text{ is odd}. \end{array}\right.		
	\end{eqnarray}
		If $(r,h, \Lambda)$ is a critical point of  $F(r,h,\Lambda)$, then
		\begin{align*}
			v= W_{r,h,\Lambda}+\phi_{r,h,\Lambda}
		\end{align*}
		is a critical point of $I(v)$ in  $H^m(\mathbb{R}^N)$. \qed
	\end{proposition}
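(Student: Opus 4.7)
The plan is a standard Lyapunov--Schmidt reduction: first recast the statement ``$v$ is critical'' as ``all Lagrange multipliers $c_\ell$ vanish'', and then deduce the vanishing of the $c_\ell$ from the scalar equations $\partial_q F=0$ for $q\in\{r,h,\Lambda\}$.

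Starting from the equation for $\phi_{r,h,\Lambda}$ provided by Proposition \ref{pro3} and using the identity $(-\Delta)^m U_{x,\Lambda} = U_{x,\Lambda}^{m^*-1}$, a routine algebraic cancellation among ${\bf N}(\phi_k)$, ${\bf l}_k$, $(-\Delta)^m W$ and $KW^{m^*-1}$ yields
\begin{equation*}
(-\Delta)^m v - K(|y|/\mu_k)\, v^{m^*-1} = \sum_{j=1}^{k}\sum_{\ell=1}^{3} c_\ell\bigl(U_{x^{+}_{k,j,r},\Lambda}^{m^*-2}\overline{\mathbb{Z}}_{\ell j} + U_{x^{-}_{k,j,r},\Lambda}^{m^*-2}\underline{\mathbb{Z}}_{\ell j}\bigr),
\end{equation*}
where $v = W_{r,h,\Lambda} + \phi_{r,h,\Lambda}$. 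Hence $v$ is a critical point of $I$ if and only if $c_\ell = 0$ for $\ell=1,2,3$ (the $H_{s,k}$ symmetry has already collapsed the $3k$ potential multipliers to three).

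For $q\in\{r,h,\Lambda\}$ the chain rule then gives
\begin{equation*}
\partial_q F = \langle I'(v),\, \partial_q W + \partial_q \phi\rangle = \sum_{\ell=1}^{3} c_\ell\, A_{q\ell},
\end{equation*}
where
\begin{equation*}
A_{q\ell} := \sum_{j=1}^{k}\bigl\langle U_{x^{+}_{k,j,r},\Lambda}^{m^*-2}\overline{\mathbb{Z}}_{\ell j} + U_{x^{-}_{k,j,r},\Lambda}^{m^*-2}\underline{\mathbb{Z}}_{\ell j},\, \partial_q W + \partial_q \phi\bigr\rangle.
\end{equation*}
Because $\partial_q W = \sum_j(\overline{\mathbb{Z}}_{qj}+\underline{\mathbb{Z}}_{qj})$, the leading contribution to $A_{q\ell}$ reduces to $2k\int_{\mathbb{R}^N} U^{m^*-2}_{0,\Lambda}\,Z_\ell\,Z_q$, with $Z_\ell$ the kernels from \eqref{Zj}. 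By the non-degeneracy result \eqref{linearized} and the parity of the integrands, this single-bubble matrix is diagonal in $(\ell,q)$ with strictly nonzero entries (with an additional factor $r^2$ in the $h$-slot).

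It remains to absorb the corrections coming from (a) interactions between distinct bubbles (indices $j\ne j'$ and $+$ versus $-$) and (b) the term $\partial_q\phi$. For (b), differentiating the orthogonality relation $\langle U^{m^*-2}_{x^\pm_{k,j,r},\Lambda}\mathbb{Z}_{\ell j},\phi\rangle \equiv 0$ with respect to $q$ yields
\begin{equation*}
\langle U^{m^*-2}\mathbb{Z}_{\ell j},\partial_q\phi\rangle = -\langle \partial_q(U^{m^*-2}\mathbb{Z}_{\ell j}),\phi\rangle,
\end{equation*}
which is $o(1)$ relative to the diagonal entries thanks to the bound $\|\phi\|_{*,k} = o(1)$ from Proposition \ref{pro3}; (a) is controlled by the bubble-interaction estimates assembled in the appendices. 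Together these show that $(A_{q\ell})$ is a small perturbation of a non-degenerate diagonal matrix (after rescaling the $h$-row by $\mu_k^{-1}$), so the system $\sum_\ell c_\ell A_{q\ell}=0$ forces $c_\ell=0$. The main obstacle is the bookkeeping in this last step: one must verify the diagonal dominance uniformly in $k$ on $\mathscr{D}_k$, at a rate sharp enough to simultaneously beat the interaction errors from (a) and the perturbative correction from (b).
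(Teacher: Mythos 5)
Your proposal is correct and is exactly the standard Lyapunov--Schmidt argument for this kind of reduction statement; the paper treats the proposition as routine (it is stated with a terminating \qed and no proof), so there is nothing to contrast it against. You correctly identify the two key points: (i) unwinding the definitions of ${\bf N}$ and ${\bf l}_k$ in \eqref{re1} together with $(-\Delta)^m U_{x,\Lambda}=U_{x,\Lambda}^{m^*-1}$ gives $(-\Delta)^m v - K(|y|/\mu_k)v^{m^*-1}=\sum_{j,\ell}c_\ell\bigl(U^{m^*-2}_{x^+_{k,j,r},\Lambda}\overline{\mathbb{Z}}_{\ell j}+U^{m^*-2}_{x^-_{k,j,r},\Lambda}\underline{\mathbb{Z}}_{\ell j}\bigr)$, so criticality is equivalent to $c_\ell=0$; and (ii) the $3\times3$ system $\partial_qF=\sum_\ell c_\ell A_{q\ell}=0$ has a coefficient matrix that is a perturbation of a nondegenerate diagonal matrix (with the $r^2$ weight in the $h$-slot consistent with the identity $\langle U^{m^*-2}_{x^+_{k,1,r},\Lambda}\overline{\mathbb{Z}}_{\ell 1},\overline{\mathbb{Z}}_{q1}\rangle=\bar c_q\delta_{\ell q}(1+\delta_{q2}r^2)$ used in the proof of Lemma \ref{lem2.1}), and the trick of differentiating the orthogonality constraints to handle $\partial_q\phi$ is the same device used later in the paper (proof of Proposition \ref{lambdadaoshu}). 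The "bookkeeping" you flag at the end is indeed the only work remaining, and it is standard and follows from the interaction estimates already assembled in Lemma \ref{lem2.1} and Appendix \ref{appendixB}; so the sketch is sound as written.
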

	
	\medskip
	We will give the expression of  $F(r,h,\Lambda)$. First, we employ the notation  $\mathcal {C}(r, \Lambda)$ to denote functions which are independent of $h$ and  uniformly bounded.
	
	\begin{proposition}\label{pr0position2,6}
		Suppose that $K(|y|)$ satisfies ${(\bf K)}$ and $N\ge 2m+3 $, $(r,h,\Lambda) \in{{\mathscr D}_k}$.  We have the following expansion as $k \to \infty$
		\begin{align*}
			F(r,h,\Lambda)
			&\,=\, I(W_{r, h, \Lambda})+k O \Big(\frac{1}{k^{\big(\frac{l(N-2m)}{N-2m-l}+\frac{2(N-2m-1)}{N-2m+1}+\sigma\big)}}\Big)
			\nonumber \\[2mm]
			&\,=\, k  A_1 -\frac{k}{\Lambda^{N-2m}} \Big[\,\frac{B_4 k^{N-2m}}{(r \sqrt{1-h^2})^{N-2m}}\,+\, \frac{B_5 k}{r^{N-2m} h^{N-2m-1} \sqrt {1-h^2}}\,\Big]
			\nonumber \\[2mm]
			& \quad+ k \Big[\frac{A_2}{\Lambda^{l}  k^{\frac{(N-2m)l}{N-2m-l}}}
			+\frac{A_3}{\Lambda^{l-2}  k^{\frac{(N-2m)l}{N-2m-l}}}({\mu_k}-r)^2\Big]
			+k  \frac{\mathcal {C}(r, \Lambda)}{k^{\frac{l(N-2m)}{N-2m-l}}}({\mu_k} -r)^{3}
			\nonumber\\[2mm]
			& \quad
			+k  \frac{\mathcal {C}(r, \Lambda)}{k^{\frac{l(N-2m)}{N-2m-l}+\sigma}}
			+k O\Big(\frac{1}{k^{\big(\frac{l(N-2m)}{N-2m-l}+\frac{2(N-2m-1)}{N-2m+1}+\sigma\big)}}\Big),
		\end{align*}
		where  $A_1, A_2, A_3,  B_4, B_5$ are  positive constants.
	\end{proposition}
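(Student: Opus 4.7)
The natural starting point is the decomposition
\[
F(r,h,\Lambda) \,=\, I(W_{r,h,\Lambda}) \,+\, \bigl(I(W_{r,h,\Lambda}+\phi_{r,h,\Lambda}) - I(W_{r,h,\Lambda})\bigr),
\]
where the parenthesised term is handled by a second-order Taylor expansion in $\phi$, while $I(W_{r,h,\Lambda})$ is expanded directly by the usual bubble-interaction plus $K$-perturbation calculus.

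For the Taylor remainder, write $I(W+\phi) - I(W) = \langle I'(W),\phi\rangle + \tfrac12\langle I''(W)\phi,\phi\rangle + R(\phi)$, with $R(\phi)$ cubic-and-higher in $\phi$. Since $(-\Delta)^m W_{r,h,\Lambda} = \sum_{j=1}^k\bigl(U_{x^{+}_{k,j,r},\Lambda}^{m^*-1}+U_{x^{-}_{k,j,r},\Lambda}^{m^*-1}\bigr)$, the definition of ${\bf l}_k$ in Section 3 gives $I'(W) = -{\bf l}_k$, hence $\langle I'(W),\phi\rangle = -\int {\bf l}_k\phi$. Testing equation \eqref{re1} against $\phi$, the orthogonality of $\phi\in \mathbb{E}$ to every $U_{x^{\pm}_{k,j,r},\Lambda}^{m^*-2}\overline{\mathbb{Z}}_{\ell j}$ and $U_{x^{\pm}_{k,j,r},\Lambda}^{m^*-2}\underline{\mathbb{Z}}_{\ell j}$ eliminates the $c_\ell$-projections, giving $\langle I''(W)\phi,\phi\rangle = \int {\bf N}(\phi)\phi + \int {\bf l}_k\phi$. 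Thus
\[
I(W+\phi) - I(W) \,=\, -\tfrac12\!\int {\bf l}_k\phi \,+\, \tfrac12\!\int {\bf N}(\phi)\phi \,+\, R(\phi).
\]
The three integrals are then estimated by a weighted-$L^\infty$/H\"older argument of the same flavour as in the proof of Lemma \ref{lem2.1}, producing bounds of the form $C k\|{\bf l}_k\|_{**,k}\|\phi\|_{*,k}$, $C k\|\phi\|_{*,k}^{1+\min(m^*-1,2)}$ (using Lemma \ref{Lemma2.3}), and $C k\|\phi\|_{*,k}^{\min(m^*,3)}$. Inserting Lemma \ref{Lemma2.4} and Proposition \ref{pro3} and comparing exponents then gives the announced remainder $kO\bigl(k^{-(l(N-2m)/(N-2m-l)+2(N-2m-1)/(N-2m+1)+\sigma)}\bigr)$.

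For $I(W_{r,h,\Lambda})$ itself, I expand
\[
W_{r,h,\Lambda}^{m^*} \,=\, \sum_{j,\epsilon} U_{x^{\epsilon}_{k,j,r},\Lambda}^{m^*} \,+\, m^*\!\!\sum_{(j,\epsilon)\neq(j',\epsilon')} U_{x^{\epsilon}_{k,j,r},\Lambda}^{m^*-1}U_{x^{\epsilon'}_{k,j',r},\Lambda} \,+\, (\text{higher overlaps}),
\]
and separately write $K(|y|/\mu_k) - 1 = -c_0\bigl|\,|y|/\mu_k - 1\bigr|^l + O(|\cdot|^{l+1})$ using $({\bf K})$. The diagonal piece produces the leading $kA_1$. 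The intra-tower nearest-neighbour interactions at mutual distance $\sim 2\pi r\sqrt{1-h^2}/k$ produce (via the standard identity $\int U_{x,\Lambda}^{m^*-1}U_{x',\Lambda} \sim c|x-x'|^{-(N-2m)}$ summed over the polygon's vertices) the $B_4 k^{N-2m}/(r\sqrt{1-h^2})^{N-2m}$ contribution; the inter-tower interactions at distance $\sim 2rh$ give the $B_5 k/(r^{N-2m}h^{N-2m-1}\sqrt{1-h^2})$ contribution, with only one pairing per tower-index (hence one factor of $k$ rather than $k^2$). Inserting the $K$-expansion into $\sum_{j,\epsilon}\!\int K U_{x^{\epsilon}_{k,j,r},\Lambda}^{m^*}$, shifting coordinates to the bubble centre and Taylor-expanding $\bigl||y|-\mu_k\bigr|^l$ around radius $r$ (the linear term killed by symmetry) yields $A_2/\Lambda^l/k^{l(N-2m)/(N-2m-l)}$, the $A_3(\mu_k-r)^2$ correction, the cubic remainder in $(\mu_k-r)$, and the $\mathcal{C}(r,\Lambda)/k^{l(N-2m)/(N-2m-l)+\sigma}$ tail from the $O(|\cdot|^{l+1})$ piece of $K$.

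The main obstacle will be the precise bookkeeping of the competing scales. One has to verify that the Taylor-remainder in $\phi$ is strictly smaller than the inter-tower $B_5$-interaction, since the latter is the only term that genuinely depends on $h$ and is what will pin $h$ down when solving the reduced problem $\nabla F = 0$ in Section \ref{sec4}. This is exactly what the lower bound $l > l_{N,m}$ in \eqref{assumptionform} is engineered to guarantee: the quadratic gain from $\|\phi\|_{*,k}^2$ provided by Proposition \ref{pro3} dominates the $h$-dependent interaction precisely in that range. The routine but lengthy evaluations of the constants $A_j, B_j$ and of the lattice sums $\sum_{j\neq 1}|x^+_{k,j,r}-x^+_{k,1,r}|^{-(N-2m)}$, $\sum_j|x^+_{k,j,r}-x^-_{k,1,r}|^{-(N-2m)}$, I would defer to the appendices announced in Section \ref{sec1}.
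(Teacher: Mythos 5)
Your decomposition $F=I(W)+\bigl(I(W+\phi)-I(W)\bigr)$, the identification $I'(W)[\cdot]=-\int {\bf l}_k\,\cdot$, the use of the orthogonality in $\mathbb{E}$ to kill the $c_\ell$-projections after testing \eqref{re1} against $\phi$, and the resulting identity $I(W+\phi)-I(W)=-\tfrac12\int{\bf l}_k\phi+\tfrac12\int{\bf N}(\phi)\phi+R(\phi)$ is exactly the argument the paper invokes by deferring to Proposition 3.1 of \cite{guo2}; the expansion of $I(W_{r,h,\Lambda})$ you sketch is carried out in Appendix \ref{appendixA} (Lemmas \ref{express7}, \ref{express9} and Proposition \ref{func}), so your route and the paper's coincide. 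One small inaccuracy worth flagging: the threshold $l_{N,m}$ in \eqref{assumptionform} is not primarily engineered for the estimate of $F$ itself, where the needed inequality $\|{\bf l}_k\|_{**,k}\|\phi\|_{*,k}\lesssim k^{-(l(N-2m)/(N-2m-l)+2(N-2m-1)/(N-2m+1)+\sigma)}$ holds under the weaker requirement $\tfrac{4ml}{N-2m-l}-\tfrac{2l}{N-2m}>\tfrac{2(N-2m-1)}{N-2m+1}$; the binding constraint, reflected in $M_{m,N,l}$ of \eqref{sigma} and Lemma \ref{B22}, comes from $\partial F/\partial h$ (Proposition \ref{lambdadaoshu}), where the derivative $\overline{\mathbb{Z}}_{2j}$ carries an extra factor $r\sim\mu_k$ and the target error exponent has only one factor of $(N-2m-1)/(N-2m+1)$ but gains $M_{m,N,l}$. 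Also, in your description of the inter-tower sum, it is not a single nearest pair that produces $B_5\,k/(r^{N-2m}h^{N-2m-1}\sqrt{1-h^2})$ but roughly $hk$ comparable pairs (see \eqref{express4}), so the mechanism is slightly different from your phrasing even though the resulting order of magnitude is the same.
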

	\begin{proof}
		Similar to the proof of Proposition  $3.1 $  in \cite{guo2}, we omit that of Proposition \ref{pr0position2,6} here.
		\end{proof}

		\medskip
		Next, we will give the expansions of $\frac{\partial F(r,h, \Lambda)}{\partial \Lambda}$ and $ \frac{\partial F(r,h, \Lambda)}{\partial h} $.
		\begin{proposition}\label{lambdadaoshu}
			Suppose that $K(|y|)$ satisfies ${(\bf K)}$ and $N\ge 2m+3 $, $(r,h,\Lambda) \in{{\mathscr D}_k}$.
			We have the following expansion for $k \to \infty$
			\begin{align}\label{thu25jun}
				\frac{\partial F(r,h, \Lambda)}{\partial \Lambda}
				&= \frac{k (N-2m)}{\Lambda^{N-2m+1}} \Big[\frac{B_4 k^{N-2m}}{(r \sqrt{1-h^2})^{N-2m}}\,+\, \frac{B_5 k}{r^{N-2m} h^{N-2m-1} \sqrt {1-h^2}} \Big]
				\nonumber\\[2mm]
				&\quad-k \Big[\, \frac{l A_2}{\Lambda^{l+1} k^{\frac{(N-2m)l}{N-2m-l}}}
				+\frac{(l-2)A_3}{\Lambda^{l-1}  k^{\frac{(N-2m)l}{N-2m-l}}} ({\mu_k}-r)^2\,\Big]
				+kO \, \Big(\frac1{k^{\frac{(N-2m)l}{N-2m-l}+ \sigma}}\Big),
			\end{align}
   and
   \begin{align} \label{frach}
				\frac{\partial F(r,h, \Lambda)}{\partial h }
				\,=\,   &-\frac{k}{\Lambda^{N-2m}}\Big[\, (N-2m) \frac{B_4 k^{N-2m}}{ r^{N-2m} (\sqrt{1-h^2})^{N-2m+2}} h-(N-2m-1) \frac{B_5 k}{r^{N-2m} h^{N-2m} \sqrt {1-h^2}} \,\Big]\nonumber\\[2mm]
    &+kO\Big(\frac{1}{k^{\big(\frac{l(N-2m)}{N-2m-l}+\frac{(N-2m-1)}{N-2m+1}+M_{m,N,l}-\epsilon_0\big)}}\Big),
			\end{align}
			where  $A_2, A_3, B_4, B_5$ are  positive constants,
   \begin{align}\label{sigma}
					M_{m,N,l}:=&\min \bigg\{\frac{(N-2m)(l-1)}{N-2m-l}-\frac{N-2m-1}{N-2m+1},\frac{l}{N-2m-l}\bigg(4m-\frac{2(N-2m-l)}{N-2m}-\frac{N-2m}{l}\bigg),\nonumber\\[2mm]
						   &\quad\quad\quad\frac{N-2m-1}{N-2m+1},\frac{3l+2m-N}{N-2m-l}-\frac{N-2m-1}{N-2m+1} \bigg\}.
			\end{align}
   and $\epsilon_{0}$ is a small constant.

		\end{proposition}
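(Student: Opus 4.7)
The plan is to differentiate the asymptotic expansion of $F(r,h,\Lambda)$ from Proposition \ref{pr0position2,6} with respect to $\Lambda$ and $h$, and verify that the remainder terms remain controlled after differentiation. The leading pieces involving $B_4$, $B_5$, $A_2$, $A_3$ are rational in $\Lambda,h,r$ and differentiate trivially, producing the main terms in \eqref{thu25jun} and \eqref{frach}. The real content is therefore the justification that the $O(\cdot)$ remainders are differentiable with the claimed orders.

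For $\partial_\Lambda F$ I would use the identity
$$\partial_\Lambda F = \bigl\langle I'(W+\phi),\,\partial_\Lambda W\bigr\rangle + \bigl\langle I'(W+\phi),\,\partial_\Lambda \phi\bigr\rangle.$$
By Proposition \ref{pro3}, $I'(W+\phi)$ lies in the span of $\{U_{x^\pm_{k,j,r},\Lambda}^{m^*-2}\overline{\mathbb{Z}}_{\ell j},\,U_{x^\pm_{k,j,r},\Lambda}^{m^*-2}\underline{\mathbb{Z}}_{\ell j}\}$ with coefficients $c_\ell$ controlled by \eqref{estimateforc}. Differentiating the orthogonality conditions defining $\mathbb{E}$ in $\Lambda$ gives $\langle U^{m^*-2}\overline{\mathbb{Z}}_{\ell j},\partial_\Lambda\phi\rangle = -\langle \partial_\Lambda(U^{m^*-2}\overline{\mathbb{Z}}_{\ell j}),\phi\rangle$, reducing the second inner product to a small remainder via Proposition \ref{pro3}. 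For the first inner product I would split $I'(W+\phi)=I'(W)+O(W^{m^*-2}\phi)+O(\phi^{m^*-1})$, whereupon the dominant contribution matches $\partial_\Lambda I(W_{r,h,\Lambda})$, which is obtained by differentiating each explicit $\Lambda$-power in Proposition \ref{pr0position2,6}. This yields \eqref{thu25jun} with the claimed remainder $k\cdot O(k^{-((N-2m)l/(N-2m-l)+\sigma)})$.

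For $\partial_h F$ the same approach applies, but the error analysis is considerably more delicate: since $h\sim\lambda_k = h_0 k^{-(N-2m-1)/(N-2m+1)}\to 0$, each $\partial_h$ formally costs a factor $1/h\sim k^{(N-2m-1)/(N-2m+1)}$. I would therefore re-derive the expansions from Appendices \ref{appendixA}--\ref{appendixB} with one additional $h$-derivative and carefully track the $h$-scaling of every subdominant term. The quantity $M_{m,N,l}$ in \eqref{sigma} is precisely the minimum of four exponents coming from four competing error sources: (i) the subleading in-tower interaction involving $B_4(1-h^2)^{-(N-2m)/2}$; (ii) the finer tail of the between-tower interaction with $h^{-(N-2m-1)}$; (iii) the Lyapunov--Schmidt correction $I(W+\phi)-I(W)$, estimated through the bound on $\|\phi\|_{*,k}$ from Proposition \ref{pro3}; and (iv) the curvature remainder generated by $[K(|y|/{\mu_k})-1]W^{m^*}$. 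The main obstacle will be this last bookkeeping step: one must show that taking $\partial_h$ of each error in the appendices costs at most a factor $\lambda_k^{-1}$ that is absorbed by the improvement of the next-order term, so that the four candidate exponents combine into exactly the minimum defining $M_{m,N,l}$, with the small slack $\epsilon_0$ absorbing the $\tau$-type losses from the weighted norms.
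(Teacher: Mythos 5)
Your proposal follows essentially the same route as the paper: you use the Lyapunov--Schmidt identity to reduce $\partial F$ to $\partial I(W_{r,h,\Lambda})$ plus correction terms, control the correction via the differentiated orthogonality conditions and the estimates of Proposition~\ref{pro3}, and then re-derive the $h$-dependence of the interaction terms in Appendix~\ref{appendixA} to identify the competing exponents that yield $M_{m,N,l}$ --- this is exactly what the paper does via \eqref{ee}, Proposition~\ref{func2}, and Lemma~\ref{B22}. One caution: your opening sentence (``differentiate the asymptotic expansion of $F$ from Proposition~\ref{pr0position2,6} and verify the remainders'') describes a shortcut that is not justified on its own, since differentiating an $O(\cdot)$ remainder in $\Lambda$ or $h$ does not automatically preserve its order --- especially for $\partial_h$ where $h\sim\lambda_k\to 0$ and a formal derivative costs a factor $h^{-1}$; the rigorous route is the one you then actually lay out, namely a fresh expansion of $\partial_h I(W)$ via the explicit interaction integrals (analogue of Lemma~\ref{express7} applied to $\partial_h\int U_{x^+_{k,1,r}}^{m^*-1}(\sum U_{x^{\pm}_{k,j,r}})$), which is what Proposition~\ref{func2} carries out.
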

		\begin{proof}
			The proof of \eqref{thu25jun} is similar to the proof of Proposition 4.4 in \cite{guo2}. For brevity, We omit it here and focus on the proof of \eqref{frach}.
			Notice that $F(r,h, \Lambda)\,=\, I( W_{r, h, \Lambda}+\phi_{r,h, \Lambda})$ , there holds
			\begin{align}
				&\frac{\partial F(r,h, \Lambda)}{\partial h }
				\nonumber\\[2mm]
				&    = \left\langle  I'( W_{r, h, \Lambda}+\phi_{r,h, \Lambda}), \frac{\partial  W_{r, h, \Lambda}  } {\partial  h} \right\rangle
				+  \left\langle  I'( W_{r, h, \Lambda}+\phi_{r,h, \Lambda}), \frac{\partial  \phi_{r,h, \Lambda}  } {\partial  h}\right\rangle
				\nonumber\\[2mm]
				&    =   \left\langle  I'( W_{r, h, \Lambda}+\phi_{r,h, \Lambda}), \frac{\partial  W_{r, h, \Lambda}  } {\partial  h} \right\rangle  +  \left\langle \sum\limits_{j=1}^k\sum\limits_{\ell=1}^3
				\Big(\, {c_\ell}U_{x^{+}_{k,j,r},\Lambda}^{m^*-2}\overline{\mathbb{Z}}_{\ell j}+{c_\ell}U_{x^{-}_{k,j,r},\Lambda}^{m^*-2}\underline{\mathbb{Z}}_{\ell j}\,\Big),    \frac{\partial  \phi_{r,h, \Lambda}  } {\partial  h}\right\rangle.
			\end{align}
			
			Noting that $ \displaystyle{\int_{{\mathbb{R}}^N} U_{x^{+}_{k,j,r}, \Lambda}^{m^*-2}  \overline{\mathbb{Z}}_{\ell j}  \phi_{r,h, \Lambda}}=  \displaystyle{\int_{{\mathbb{R}}^N} U_{x^{-}_{k,j,r}, \Lambda}^{m^*-2}  \underline{\mathbb{Z}}_{\ell j}  \phi_{r,h, \Lambda}=0}$, we have
			\begin{align*}
				\left\langle U_{x^{+}_{k,j,r},\Lambda}^{m^*-2}\overline{\mathbb{Z}}_{\ell j}, \frac{\partial  \phi_{r,h, \Lambda}  } {\partial  h} \right\rangle
				\,=\, - \left\langle \frac { \partial (U_{x^{+}_{k,j,r},\Lambda}^{m^*-2}\overline{\mathbb{Z}}_{\ell j})}{\partial h},\phi_{r,h, \Lambda} \right\rangle,
			\end{align*}
			\begin{align*}
				\left\langle U_{x^{-}_{k,j,r},\Lambda}^{m^*-2}\overline{\mathbb{Z}}_{\ell j},    \frac{\partial  \phi_{r,h, \Lambda}  } {\partial  h}\ \right\rangle
				\, =\,  -\left\langle \frac { \partial (U_{x^{+}_{k,j,r},\Lambda}^{m^*-2}\overline{\mathbb{Z}}_{\ell j})}{\partial h},      \phi_{r,h, \Lambda} \right\rangle.
			\end{align*}
			Thus,
			\begin{align}
				&\Bigg\langle  \sum\limits_{j=1}^k
				\Big(\, {c_\ell}U_{x^{+}_{k,j,r},\Lambda}^{m^*-2}\overline{\mathbb{Z}}_{\ell j}+{c_\ell}U_{x^{-}_{k,j,r},\Lambda}^{m^*-2}\underline{\mathbb{Z}}_{\ell j}\,\Big),    \frac{\partial  \phi_{r,h, \Lambda}  } {\partial  h}\Bigg\rangle
				\nonumber\\[2mm]
				&\leq C |c_\ell| \|  \phi_{r,h, \Lambda} \|_{{*,k}}   \sum\limits_{i=1}^k \int_{\R^N}     \frac { \partial (U_{x^{+}_{k,i,r},\Lambda}^{m^*-2}\overline{\mathbb{Z}}_{\ell i})}{\partial h}  \Biggl(
				\sum_{j=1}^k \Big[\frac1{(1+|y-x^{+}_{k,j,r}|)^{\frac{N-2m}{2}+\tau
				}}+\frac1{(1+|y-x^{-}_{k,j,r}|)^{\frac{N-2m}{2}+\tau
				}}\Big] \Biggr)
				\nonumber\\[2mm]
				&\leq
				C |c_\ell| \|  \phi_{r,h, \Lambda} \|_{{*,k}}
				\nonumber\\[2mm]
				& \quad \times
				\sum\limits_{i=1}^k \int_{\R^N}    \frac {r(1+\delta_{\ell 2}{\mu_k}) } {(1+ |y-x^{+}_{k,i,r}| )^{N+2m+1}  }   \Biggl(
				\sum_{j=1}^k \Big[\frac1{(1+|y-x^{+}_{k,j,r}|)^{\frac{N-2m}{2}+\tau
				}}+\frac1{(1+|y-x^{-}_{k,j,r}|)^{\frac{N-2m}{2}+\tau
				}}\Big] \Biggr)
				\nonumber\\[2mm]
				&\leq  C  {\mu_k}   \frac1{k^{(\frac l{N-2m-l})(
						N+2m-2\frac{N-2m-l}{N-2m}- 2\epsilon_1 )}},
			\end{align}
			where we combined \eqref{estimateforphik},\eqref{estimateforc} and  the inequalities
			\begin{align*}
				\Bigg|  \frac{ \partial  \big( U_{x^{+}_{k,i,r},\Lambda}^{m^*-2}\overline{\mathbb{Z}}_{ \ell i} \big)}{\partial h}  \Bigg|
				\le C   \frac {{\mu_k}(1+\delta_{\ell 2} r) } {(1+ |y-x^{+}_{k,i,r}| )^{N+2m+1}  }\quad \text{for}~i=1, \cdots,k,  \ell=1,2,3.
			\end{align*}
			
			On the other hand,
			\begin{align}\label{I'parW}
				& \Bigg\langle  I'( W_{r, h, \Lambda}+\phi_{r,h, \Lambda}), \frac{\partial  W_{r, h, \Lambda}  } {\partial  h} \Bigg\rangle
				\nonumber\\[2mm]
				& =   \frac{\partial I( W_{r, h, \Lambda}) } {\partial h}  +   (m^*-1) \int_{\mathbb{R}^N} K\Big(\frac{|y|}{{\mu_k}}\Big)  W_{r, h, \Lambda}^{m^*-2} \frac{\partial  W_{r, h, \Lambda}  } {\partial  h}  \phi_{r,h, \Lambda}   + O\Big( \int_{\mathbb{R}^N} \phi_{r,h, \Lambda}^2  \Big).
			\end{align}
   Moreover, from the the decay property of $K(|y|)$ and  orthogonality of $\phi_{r, h, \Lambda}$, we can show that the second term in \eqref{I'parW} is small.
			
			\begin{align*}
				& \int_{\mathbb{R}^N} K\Big(\frac{|y|}{{\mu_k}}\Big)  W_{r, h, \Lambda}^{m^*-2} \frac{\partial  W_{r, h, \Lambda}  } {\partial  h}  \phi_{r,h, \Lambda}
				\nonumber\\[2mm]
				& =    \int_{\mathbb{R}^N} K\Big(\frac{|y|}{{\mu_k}}\Big)  \Big[  W_{r, h, \Lambda}^{m^*-2} \frac{\partial  W_{r, h, \Lambda}  } {\partial  h}   -  \sum_{i=1}^k \big(U_{x^{+}_{k,i,r},\Lambda}^{m^*-2} \overline{\mathbb{Z}}_{2i}  + U_{x^{-}_{k,i,r},\Lambda}^{m^*-2}  \underline{\mathbb{Z}}_{2i} \big) \Big] \phi_{r,h, \Lambda}
				\nonumber\\[2mm]
				& \quad + \sum_{i=1}^k    \int_{\mathbb{R}^N} \Big[K\Big(\frac{|y|}{{\mu_k}}\Big)-1\Big]  \big(U_{x^{+}_{k,i,r},\Lambda}^{m^*-2} \overline{\mathbb{Z}}_{2i}  + U_{x^{-}_{k,i,r},\Lambda}^{m^*-2} \underline{\mathbb{Z}}_{2i} \big)  \phi_{r,h, \Lambda}
				\nonumber\\[2mm]
				& = 2k   \int_{\Omega^+_1}   K\Big(\frac{|y|}{{\mu_k}}\Big)  \Big[  W_{r, h, \Lambda}^{m^*-2} \frac{\partial  W_{r, h, \Lambda}  } {\partial  h}   -  \sum_{i=1}^k \big(U_{x^{+}_{k,i,r},\Lambda}^{m^*-2} \overline{\mathbb{Z}}_{2i}  + U_{x^{-}_{k,i,r},\Lambda}^{m^*-2} \underline{\mathbb{Z}}_{2i} \big)  \Big] \phi_{r,h, \Lambda}
				\nonumber\\[2mm]
				& \quad + 2k \int_{\mathbb{R}^N} \Big[K\Big(\frac{|y|}{{\mu_k}}\Big)-1\Big]   U_{x^{+}_{k,1,r},\Lambda}^{m^*-2}\overline{\mathbb{Z}}_{21}     \phi_{r,h, \Lambda}.
			\end{align*}
			Using the expression of $W_{r, h, \Lambda}$, we obtain that
			\begin{align*}
				 \int_{\Omega^+_1}&K\Big(\frac{|y|}{{\mu_k}}\Big)\,\Big[  W_{r, h, \Lambda}^{m^*-2} \frac{\partial  W_{r, h, \Lambda}  } {\partial  h}
				\,-\,\sum_{i=1}^k \big(U_{x^{+}_{k,i,r},\Lambda}^{m^*-2} \overline{\mathbb{Z}}_{2i}  + U_{x^{-}_{k,i,r},\Lambda}^{m^*-2}  \underline{\mathbb{Z}}_{2i} \big)\Big] \phi_{r,h, \Lambda}
				\nonumber\\[2mm]
				\le &C \int_{\Omega^+_1}  \Big[ U_{x^{+}_{k,1,r},\Lambda}^{m^*-2}    \big( \sum_{j=2 }^k  \overline{\mathbb{Z}}_{2j}
				\,+\,\sum_{j=1 }^k   \underline{\mathbb{Z}}_{2j}    \big)  + \big( \sum_{i=2}^k U_{x^{+}_{k,i,r},\Lambda}^{m^*-2} \overline{\mathbb{Z}}_{2i}
				+\sum_{i=1}^k  U_{x^{-}_{k,i,r},\Lambda}^{m^*-2}\underline{\mathbb{Z}}_{2i} \big) \Big] \phi_{r,h, \Lambda}
				\nonumber\\[2mm]
				\le& C\Big(\frac{k}{{\mu_k}}\Big)^{\frac {N+2m} 2 -\tau} \int_{\Omega^+_1}   \frac{\mu_k}{\big(1+|y-x^{+}_{k,1,r}|\big)^{\frac{N+2m}{2}+1+\tau}} \phi_{r,h, \Lambda}
				\nonumber\\[2mm]
				  \le& C \Big(\frac{k}{{\mu_k}}\Big)^{\frac {N+2m} 2 -\tau}   \| \phi_{r,h, \Lambda}   \|_{{*,k}}  \int_{\Omega^+_1}   \frac{\mu_k}{\big(1+|y-x^{+}_{k,1,r}|\big)^{\frac{N+2m}{2}+1+\tau}}
   \\&\times\Bigl(
				\sum_{j=1}^k \Big[\frac1{(1+|y-x^{+}_{k,j,r}|)^{\frac{N-2m}{2}+\tau
				}}+\frac1{(1+|y-x^{-}_{k,j,r}|)^{\frac{N-2m}{2}+\tau
				}}\Big] \Bigr)
				\nonumber\\[2mm]
				  \le& C {\mu_k} \Big(\frac{k}{{\mu_k}}\Big)^{\frac {N+2m} 2 -\tau} \| \phi_{r,h, \Lambda}   \|_{{*,k}} \nonumber\\[2mm]
				\leq&  C  {\mu_k}    \frac1{k^{(\frac l{N-2m-l})(
						N+2m-2\frac{N-2m-l}{N-2m}- 2\epsilon_1 )}}.
			\end{align*}
			And it's easy to show that
			\begin{equation*}
				\int_{\mathbb{R}^N} \Big[K\Big(\frac{|y|}{{\mu_k}}\Big)-1\Big]   U_{x^{+}_{k,1,r},\Lambda}^{m^*-2} \,\overline{\mathbb{Z}}_{21}\,\phi_{r,h, \Lambda}\leq  C  {\mu_k}   \frac1{k^{(\frac l{N-2m-l})(
						N+2m-2\frac{N-2m-l}{N-2m}- 2\epsilon_1 )}}.
			\end{equation*}
			Combining all the results above, we can get
			\begin{equation}\label{ee}
				\frac{\partial F(r,h, \Lambda)}{\partial h } = \frac{\partial I(W_{r,h,\Lambda})}{\partial h}+  kO\Big( {\mu_k}    \frac1{k^{(\frac l{N-2m-l})(N+2m-2\frac{N-2m-l}{N-2m}- 2\epsilon_1 )}}\Big).
			\end{equation}
			which, together with Proposition \ref{func2} and Lemma \ref{B22}, leads to \eqref{frach}.
		\end{proof}

		\medskip
  Using Proposition \ref{frach}, we can identify the order of parameters $(r,h,\Lambda)$. Let $\Lambda_0$ be the solution of
  \[\frac{B_4(N-2m)}{\Lambda^{N-2m+1}}- \frac{A_2 l}{\Lambda^{l+1}}=0,\]
  which gives that
		\begin{align} \label{lambda0}
			\Lambda_0=\Big[\,\frac{(N-2m) B_4}{A_2 l}\,\Big]^{\frac 1 {N-2m-l}}.
		\end{align}
		And let
             \begin{align} \label{h_0}
			{\lambda_k} = \frac{h_0}{k^{\frac{N-2m-1}{N-2m+1}}}, \quad \text{with}   ~h_0=  \Big[\frac{(N-2m-1) B_5}{(N-2m) B_4}\Big]^{\frac 1 {N-2m+1}},
		\end{align}
             which solves  $$\Big[(N-2m)  B_4 k^{N-2m} h-(N-2m-1) \frac{B_5 k}{h^{N-2m}}\Big]=0.$$
		Correspondingly, we define
		\begin{align}\label{Dk}
			{D_k}
			=\Bigg\{&(r,h,\Lambda) :\,  r\in \Big[k^{\frac{N-2m}{N-2m-l}}-\frac{1}{{k^{\bar \theta}}}, k^{\frac{N-2m}{N-2m-l}}+\frac{1}{{k^{\bar \theta}}} \Big], \quad
			\Lambda \in \Big[\Lambda_0-\frac{1}{{k^{\frac{3  \bar \theta}{2}}}},  \Lambda_0+\frac{1}{{k^{\frac{3  \bar \theta}{2}}}}\Big],
			\nonumber\\[2mm]
			& \qquad \qquad h \in \Big[\frac{h_0}{k^{\frac{N-2m-1}{N-2m+1}}} \Big(1-\frac{1}{k^{\bar \theta}}\Big),\frac{h_0}{k^{\frac{N-2m-1}{N-2m+1}}} \Big(1+\frac{1}{k^{\bar \theta}}\Big)\Big] \Bigg\},
		\end{align}
		for  $\bar \theta \le M_{m,N,l}$.  In fact, $ {D_k}$ is a subset of ${{\mathscr D}_k}$.   We will find a critical point of $F(r, h, \Lambda)$ in ${D_k}$.
		
		Let
		\begin{align}\label{barF}
			\bar  F(r,h,\Lambda)=-F(r,h,\Lambda),
		\end{align}
		and
		\[t_2= k (-A_1+\eta_1),
		\quad t_1
		= k  \Big(-A_1 -\big( \frac{A_2}{\Lambda_0^{l}} - \frac{B_4}{\Lambda_0^{N-2m}}  \big) \frac{1}{k^{\frac{(N-2m)l}{N-2m-l}}} -\frac{1}{k^{\frac{(N-2m) l}{N-2m-l}+\frac{5 \bar \theta} 2}}\Big),  \]
		where $\eta_1>0 $ small. We also define the energy level set
		\[  \bar F^{t}= \Big\{(r,h,\Lambda) \big|~(r,h,\Lambda) \in {D_k}, ~ \bar F (r,h,\Lambda) \le{t} \Big\}.  \]
		
		Consider the following  gradient flow system
		\begin{equation*}
			\begin{cases}
				\frac{{\mathrm d} r}{{\mathrm d} t}=-{\bar F}_r , &t>0;\\[2mm]
				\frac{{\mathrm d} h}{{\mathrm d} t}=-{\bar F}_h , &t>0;\\[2mm]
				\frac{{\mathrm d} \Lambda}{{\mathrm d} t}=-{\bar F}_\Lambda, &t>0;\\[2mm]
				(r,h,\Lambda) \big|_{t=0}  \in \bar F^{t_2}.
			\end{cases}
		\end{equation*}
		Then we have the next proposition:
		
		\begin{proposition} \label{5.73.3}
			The flow would not leave  $ {D_k} $ before it reaches  $ \bar F^{t_1}. $
		\end{proposition}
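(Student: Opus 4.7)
The plan is to show that when the flow first touches $\partial D_k$, the energy $\bar F$ has already dropped below $t_1$, which is the conclusion of the proposition. Since $\frac{d}{dt}\bar F = -|\nabla \bar F|^2 \le 0$, the energy decreases monotonically along the flow, so it remains in $\bar F^{t_2}$ for all forward time. Let $T_1$ be the first time that $\bar F \le t_1$ and $T_2$ the first time the trajectory meets $\partial D_k$; I will rule out $T_2 < T_1$ by a case analysis on which face of $\partial D_k$ is reached.

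Using Propositions \ref{pr0position2,6} and \ref{lambdadaoshu}, the function $F$ inside $D_k$ has a nondegenerate quadratic \emph{minimum} in $r$ at $\mu_k$ (coefficient $\sim k^{1-(N-2m)l/(N-2m-l)} A_3/\Lambda^{l-2}$), a nondegenerate \emph{maximum} in $\Lambda$ at $\Lambda_0$ (the sign of $F_{\Lambda\Lambda}(\Lambda_0)$ being computed from \eqref{lambda0} to be a negative multiple of $(l-N+2m)$), and a nondegenerate maximum in $h$ at $\lambda_k$ (from the expression \eqref{frach} and \eqref{h_0}). Hence $\bar F = -F$ has a max in $r$ and mins in $\Lambda$ and $h$, so the only direction in which $-\nabla \bar F$ can push the trajectory outward is the $r$-direction.

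For the $\Lambda$-faces $\Lambda = \Lambda_0 \pm k^{-3\bar\theta/2}$ and the $h$-faces $h = \lambda_k(1 \pm k^{-\bar\theta})$, I would Taylor-expand $\partial_\Lambda \bar F$ and $\partial_h \bar F$ around their critical values using Proposition \ref{lambdadaoshu}. On each such face the leading Hessian-times-displacement contribution is of order $k^{1-(N-2m)l/(N-2m-l) - 3\bar\theta/2}$ (respectively of order dictated by \eqref{frach} at displacement $\lambda_k k^{-\bar\theta}$), while the error term in \eqref{thu25jun}--\eqref{frach} is of strictly smaller order precisely because of the constraint $\bar\theta \le M_{m,N,l}$ built into \eqref{Dk} via \eqref{sigma}. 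Consequently the outward normal component of $-\nabla \bar F$ is strictly negative on these four faces, and the flow cannot cross them.

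For the $r$-faces $r = \mu_k \pm k^{-\bar\theta}$, the quadratic term in Proposition \ref{pr0position2,6} evaluated on the face contributes a drop
\begin{equation*}
-\,\frac{k A_3}{\Lambda_0^{l-2}}\,k^{-(N-2m)l/(N-2m-l)-2\bar\theta}
\end{equation*}
to $\bar F$, which dominates both the cubic remainder $\mathcal C(r,\Lambda) k^{1-(N-2m)l/(N-2m-l)-3\bar\theta}$ and the error $k \mathcal C k^{-(N-2m)l/(N-2m-l)-\sigma}$ as well as the $(h,\Lambda)$ perturbations already controlled in case (a). Comparing with the definition of $t_1$ and using $2\bar\theta < \tfrac{5\bar\theta}{2}$, the upper bound for $\bar F$ on any $r$-face is strictly smaller than $t_1$. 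Therefore, if $T_2$ occurs on an $r$-face, then $\bar F(T_2) \le t_1$, i.e.\ $T_1 \le T_2$, contradicting the hypothesis $T_2 < T_1$. Combining the two cases proves that the flow cannot leave $D_k$ before reaching $\bar F^{t_1}$. The main obstacle is the delicate book-keeping of orders in case (a): one must verify that the reduction error terms from Proposition \ref{lambdadaoshu} are uniformly smaller than the leading Hessian contribution over the whole face, which is exactly why the admissible range of $\bar\theta$ is capped at $M_{m,N,l}$.
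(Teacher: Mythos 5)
Your proposal follows essentially the same route as the paper: split $\partial D_k$ into the $r$-faces (where you bound the energy and conclude $\bar F < t_1$ directly, using $2\bar\theta < \tfrac{5}{2}\bar\theta$ against the definition of $t_1$) and the $\Lambda$- and $h$-faces (where you show the gradient flow points inward by comparing the leading Hessian-times-displacement term against the error term, which is exactly what the cap $\bar\theta \le M_{m,N,l}$ guarantees); this is precisely the paper's Case~1 versus Cases~2--3. One small slip: you describe $F_{\Lambda\Lambda}(\Lambda_0)$ as a \emph{negative} multiple of $(l-N+2m)$, which would make it positive (a minimum), but the direct computation gives a \emph{positive} multiple of $(l-N+2m)<0$, consistent with the maximum you claim and with the paper.
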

		\begin{proof}
			There are   three  cases that the flow tends to leave  $ {D_k} $:
			\\[2mm]
			\item[ {\bf Case  1.} ]
			$|r-{\mu_k}|= \frac{1}{k^{\bar \theta}}  $ and  $|1-{\lambda_k}^{-1} h| \le \frac{1}{k^{\bar \theta}}, \quad|\Lambda-\Lambda_0|\le \frac{1}{k^{\frac{3  \bar \theta}{2}}}  $;
			\\[2mm]
			~\item[ {\bf Case  2.} ] $|1-{\lambda_k}^{-1} h|=  \frac{1}{k^{\bar \theta}}  $  when  $|r-{\mu_k}| \le \frac{1}{k^{\bar \theta}}, \quad|\Lambda-\Lambda_0|\le \frac{1}{k^{\frac{3  \bar \theta}{2}}} $;
			\\[2mm]
			~ \item[ {\bf Case  3.} ]  $| \Lambda-\Lambda_0|=  \frac{1}{k^{\frac{3  \bar \theta}{2}}} $ when  $|r-{\mu_k}| \le \frac{1}{k^{\bar \theta}}, \quad|1-{\lambda_k}^{-1} h| \le \frac{1}{k^{\bar \theta}}. $
			\\[2mm]

			First consider {\bf Case 1} . Since  $|\Lambda-\Lambda_0|\le \frac{1}{k^{\frac{3  \bar \theta}{2}}}  $, it is easy to derive that
			\begin{align} \label{decomlambda} \begin{split}
					\Big(\frac{B_4}{\Lambda^{N-2m}} - \frac{A_2}{\Lambda^{l}}\Big)
					&= \Big(\frac{B_4}{\Lambda_0^{N-2m}} - \frac{A_2}{\Lambda_0^{l}}\Big)+O(|\Lambda-\Lambda_0|^2)\\[2mm]
					&=\Big(\frac{B_4}{\Lambda_0^{N-2m}} - \frac{A_2}{\Lambda_0^{l}}\Big)+O\Big(\frac{1}{k^{3 \bar \theta}}\Big).
				\end{split}
			\end{align}
			Combining  \eqref{barF}, \eqref{decomlambda},\eqref{FF},  we can obtain that
			\begin{align}\label{F}
				\bar F(r,h,\Lambda)  &=-k  A_1+k \Big[\frac{B_4}{\Lambda_0^{N-2m} k^{\frac{(N-2m)l}{N-2m-l}}} -
				\frac{A_2}{\Lambda_0^{l}  k^{\frac{(N-2m)l}{N-2m-l}}} \Big] \nonumber
				\\[2mm]
				& \quad
				- k  \frac{A_3}{\Lambda_0^{l-2}  k^{\frac{(N-2m) l}{N-2m-l}+2 \bar\theta}}
				+O\Big(\frac{1}{k^{\frac{(N-2m) l}{N-2m-l}+2 \bar \theta+2\epsilon} }\Big) <t_1.
			\end{align}
			
			In {\bf Case 2}, we claim that it's impossible for the flow  $\big(r(t), h(t), \Lambda(t)\big)$ leaves  ${D_k}$. Because if $1-{\lambda_k}^{-1} h=  \frac{1}{k^{\bar \theta}} $, then from  \eqref{barF},\eqref{frach1} and $\bar{\theta}=\bar{\sigma}-\epsilon$, we have
			\begin{align}
				\frac{\partial \bar F(r,h,\Lambda)}{\partial h} = -\frac{k}{\Lambda^{N-2m}}\Big[\,  \frac{2B_7}{\Lambda^{N-2m}  k^{\frac{(N-2m)l}{N-2m-l}+\frac{(N-2m-1)}{N-2m+1} +\bar\theta}} \,\Big]  + O\Big(\frac{1}{k^{\frac{(N-2m) l}{N-2m-l}+ \frac{(N-2m-1)}{N-2m+1} +2 \bar \theta}}\,\Big) <0.
			\end{align}
			On the other hand, if  $1-{\lambda_k}^{-1} h=  -\frac{1}{k^{\bar \theta}} $
			\begin{align}
				\frac{\partial \bar F(r,h,\Lambda)}{\partial h} = \frac{k}{\Lambda^{N-2m}}\Big[\,  \frac{2B_7}{\Lambda^{N-2m}  k^{\frac{(N-2m)l}{N-2m-l}+\frac{(N-2m-1)}{N-2m+1} +\bar\theta}} \,\Big]  + O\Big(\frac{1}{k^{\frac{(N-2m) l}{N-2m-l}+ \frac{N-2m-1} {N-2m+1} +2 \bar \theta}}\,\Big) >0.
			\end{align}		
			So it's  impossible for  the flow leaves  ${D_k}$ in {\bf Case 2}.\\
				\medskip
				
    Finally, we consider {\bf Case 3}.
				If  $\Lambda=\Lambda_0+\frac{1}{k^{\frac{3  \bar \theta}{2}}} $, from
				\eqref{thu25jun} and \eqref{barF},  there exists a constant  $ C_1 $ such that
				\[
				\frac{\partial \bar F(r,h,\Lambda)}{\partial \Lambda}
				= k\Big[\,C_1 \frac{1}{k^{\frac{(N-2m) l}{N-2m-l}+\frac 32 \bar \theta}}
				+O\Big(\frac{1}{k^{\frac{(N-2m) l}{N-2m-l}+2 \bar \theta}}\Big)\,\Big]
				>0.
				\]
				On the other hand, if  $\Lambda=\Lambda_0-\frac{1}{k^{\frac{3  \bar \theta}{2}}}$, there exists a constant  $C_2$ such that
				\[
				\frac{\partial \bar F(r,h,\Lambda)}{\partial \Lambda}
				= k\Big[\,-C_2 \frac{1}{k^{\frac{(N-2m) l}{N-2m-l}+\frac 32 \bar \theta}}
				+O\Big(\frac{1}{k^{\frac{(N-2m) l}{N-2m-l}+2 \bar \theta}}\,\Big)\Big]<0.
				\]
				Hence the flow  $ \big(r(t), h(t), \Lambda(t)\big)$ does not leave  ${D_k}$ when  $| \Lambda-\Lambda_0|=  \frac{1}{k^{\frac{3  \bar \theta}{2}}}$.
				
				Combining above results, we conclude that the flow would not leave  ${D_k}$ before it reach  $ \bar{F}^{t_1}$.
			\end{proof}

			\vspace{3mm}
			Proposition \ref{5.73.3} plays an important role in proving Theorem \ref{main1}, now let us give the proof of it. \\[2mm]
			{\bf{Proof of Theorem \ref{main1}}:}
			According to  Proposition \ref{pro2.4}, in order to show Theorem \ref{main1},  we only need to  show that function  $\bar F(r,h, \Lambda)$, and thus $F(r,h, \Lambda)$,  has a critical point in  ${D_k}$.
			
			Define
			\[
			\begin{split}
				G=\Bigl\{\gamma :\quad &\gamma(r,h,\Lambda)= \big(\gamma _1(r,h,\Lambda),\gamma _2(r,h,\Lambda),\gamma _3(r,h,\Lambda)\big)\in{D_k},(r,h,\Lambda)\in{D_k}; \\[2mm]
				&\gamma(r,h,\Lambda)=(r,h,\Lambda), \;\text{if} ~|r-{\mu_k}|=  \frac{1}{k^{\bar \theta}}  \Bigr\}.
			\end{split}
			\]
			
			Let
			\[
			{\bf c}=\inf_{\gamma  \in G}\max_{(r,h,\Lambda)\in{D_k}} \bar F \big(\gamma(r,h,\Lambda)\big).
			\]
			
			We claim that  $ {\bf c}  $ is a critical value of  $ \bar F(r,h, \Lambda)  $ and can be achieved by some  $(r,h, \Lambda) \in{D_k} $. By the minimax theory, we need to show that
			\medskip
			\begin{itemize}
				
				\item[(i)]  $ t_1< {\bf c} < t_2 $;
				\medskip

				\item[(ii)] \label{itemii}  $\underset{{|r-{\mu_k}|=  \frac{1}{k^{\bar \theta}}}}{\sup}
				{\bar F}\big(\gamma(r,h,\Lambda)\big)<t_1,\;\forall\; \gamma \in G. $
				
			\end{itemize}
			
			To prove (ii),  let $\gamma\in G$. Then for any  $\bar r$ with
			$|\bar r-{\mu_k}|=  \frac{1}{k^{\bar \theta}}$, we have $\gamma(\bar r,h,\Lambda)= (\bar r,h,\bar\Lambda)$ for some $\bar\Lambda$.
			Thus, by \eqref{F},
			
			\[
			\bar F(\gamma(\bar r,h,\Lambda))= \bar F(\bar  r,h,\bar \Lambda)<t_1.
			\]
			
			Now we prove (i).  It is easy to see that
			
			\[
			{\bf c}<t_2.
			\]
			
			For any $\gamma=(\gamma_1,\gamma_2,\gamma_3)\in G$. Then $\gamma_1(r,h,\Lambda)= r$, if
			$|r-{\mu_k}|=  \frac{1}{k^{\bar \theta}}$. Define
			
			\[
			\tilde \gamma_1(r)=\gamma_1(r,h_0,\Lambda_0).
			\]
			Then  $\tilde \gamma_1(r)= r$, if $|r-{\mu_k}|=  \frac{1}{k^{\bar \theta}}$.
			So, there is a $\bar r\in ({\mu_k}-\frac1{k^{\bar\theta}}, {\mu_k}+\frac1{k^{\bar\theta}})$, such that
			
			\[
			\tilde \gamma_1(\bar r)={\mu_k}.
			\]
			Let $\bar \Lambda= \gamma_2(\bar r,h_0,\Lambda_0),\bar h=\gamma_3(\bar r, h_0, \Lambda_0)$. Then from \eqref{FF}
			
			\begin{align*}
				&\max_{(r,h,\Lambda)\in {D_k}}\bar F(\gamma(r,h,\Lambda))\ge \bar F(\gamma(\bar r,h_0,\Lambda_0))=
				\bar F({\mu_k}, \bar h,\bar \Lambda)\\
				=&- k  A_1 +k \Big[\frac{B_4}{\bar \Lambda^{N-2m} k^{\frac{(N-2m)l}{N-2m-l}}}+\frac{B_6}{\bar \Lambda^{N-2m} k^{\frac{(N-2m)l}{N-2m-l}+\frac{2(N-2m-1)}{N-2m+1}}}
				+\frac{B_7}{\bar \Lambda^{N-2m}  k^{\frac{(N-2m)l}{N-2m-l}+\frac{2(N-2m-1)}{N-2m+1}}} (1-{\lambda_k}^{-1} \bar h)^2\Big]\\
				&-	k \frac{A_2}{\bar \Lambda^{l}  k^{\frac{(N-2m)l}{N-2m-l}}}
				+k O\big (\frac{1}{k^{\frac{(N-2m)l}{N-2m-l}+\frac{2(N-2m-1)}{N-2m+1} + \min\{3 \bar \theta,\sigma\}}}\Big )\\
				> &-k  A_1 +k \frac{B_4}{ \Lambda_{0}^{N-2m} k^{\frac{(N-2m)l}{N-2m-l}}}
				-	k \frac{A_2}{ \Lambda_{0}^{l}  k^{\frac{(N-2m)l}{N-2m-l}}}
				+k O\big (\frac{1}{k^{\frac{(N-2m)l}{N-2m-l}+3 \bar \theta}}\Big )
				>t_1.
			\end{align*}

                 By scaling, we equivalently know that problem \eqref{pr1} has a solution of the form mentioned in Theorem \ref{main1}
                \begin{align*}
				u_k(y)={\mu_k}^{\frac{N-2m} 2} \Big[W_{r_k, h_k,\Lambda_k}({\mu_k} y)+\phi_k({\mu_k} y)\Big]=W_{r_k, h_k,\Lambda_k\mu_k}(y)+\omega_k.
			\end{align*}
   \qed\\
   \medskip

			\appendix

			\section{Energy expansions}\label{appendixA}
			This section is devoted to the computation of the expansion for the energy functional  $I(W_{r,h,\Lambda})  $.  We first give the following Lemma.
			\begin{lemma}  \label{express7}
				$N\ge 2m+3 $ and  $(r,h,\Lambda) \in{{\mathscr D}_k}$.
				For $k \to \infty,i=1,\cdots,k$, we have the following expansions:
				\begin{align} \label{express3}
					\sum_{j=2}^k  \frac{1}{|x^{+}_{k,1,r}-x^{+}_{k,j,r}|^{N-2m}} =  \frac{k^{N-2m}}{\big(r \sqrt{1-h^2}\big)^{N-2m}}  \big(B_1+\zeta_1(k)\big),
				\end{align}
				\begin{equation}
					\begin{split} \label{express4}
						& \sum_{j=1}^k  \frac{1}{|x^{+}_{k,1,r}-x^{-}_{k,j,r}|^{N-2m}} = \frac{B_2 k}{r^{N-2m} h^{N-2m-1} \sqrt {1-h^2}} \, \big(1+\zeta_2(k)\big)+\frac{\zeta_1(k) k^{N-2m}}{\big(r \sqrt{1-h^2}\big)^{N-2m}},
					\end{split}
				\end{equation}
    	\begin{align}\label{a33}
					\int_{\R^N}  U_{x^{+}_{k,1,r}, \Lambda}^{m^*-1}  U_{x^{+}_{k,i,r}, \Lambda}
					= \frac{B_0}{\Lambda^{N-2m}|x^{+}_{k,1,r}-x^{+}_{k,i,r}|^{N-2m}}+O\Big(\frac 1 {|x^{+}_{k,1,r}-x^{+}_{k,i,r}|^{N-2m+2-\epsilon_0}}\Big),
				\end{align}
				and
				\begin{align}\label{a34}
					\int_{\R^N}  U_{x^{+}_{k,1,r}, \Lambda}^{m^*-1}  U_{x^{-}_{k,i,r}, \Lambda}
					=\frac{B_0}{\Lambda^{N-2m}|x^{+}_{k,1,r}-x^{-}_{k,i,r}|^{N-2m}}+O\Big(\frac 1 {|x^{+}_{k,1,r}-x^{-}_{k,i,r}|^{N-2m+2-\epsilon_0}}\Big),  \quad 
				\end{align}
				where
    \begin{equation}
         B_0= \displaystyle\int_{\mathbb{R}^N}  \frac{1}{(1+z^{2})^{\frac{N+2m} 2}},
    \end{equation}
				\begin{align} \label{B1sigmak}
					 B_1=  \frac 2 {(2\pi)^{N-2m}} \sum_{j=1}^{\infty} \frac 1 {j^{N-2m}},
					\quad B_2= \frac 1 {2^{N-2m-1} \pi}\int_0^{+\infty}\,\frac{1}{\big(s^2+1\big)^{\frac{N-2m}{2}}}\,{\mathrm d}s,
				\end{align}
				\begin{align} \label{sigmak}
					\zeta_1(k)=
					\begin{cases}
						O\big(\frac{1}{k^{2}}\big),  \quad  N\ge 2m+4,
						\\[2mm]
						O\big(\frac{\ln k}{k^2}\big), \quad  N = 2m+3,
					\end{cases}  \quad \zeta_2(k)= O\big((hk)^{-1}\big).
				\end{align}
    and $\epsilon_0$ is constant small enough.
			\end{lemma}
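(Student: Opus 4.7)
The plan is to prove the four expansions in order, relying on the explicit coordinates of the concentration points (vertices of a regular $k$-gon at heights $\pm rh$) and on classical bubble-interaction estimates for the Aubin--Talenti profile. For \eqref{express3}, the starting point is the chord-length identity $|x^+_{k,1,r}-x^+_{k,j,r}| = 2r\sqrt{1-h^2}\sin\frac{(j-1)\pi}{k}$, which comes from $k$ equally spaced points on a circle of radius $r\sqrt{1-h^2}$. Writing $\sin\theta = \theta\bigl(1+O(\theta^2)\bigr)$ in $\sum_{j=2}^{k}\sin^{-(N-2m)}\frac{(j-1)\pi}{k}$ isolates the leading term $2(k/\pi)^{N-2m}\sum_{i=1}^{\infty} i^{-(N-2m)}$, matching the constant $B_1$. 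The correction coming from the Taylor remainder, of order $k^{N-2m-2}\sum_{i=1}^{\lfloor k/2\rfloor} i^{2-(N-2m)}$, converges absolutely when $N-2m\ge 4$ (giving $\zeta_1(k) = O(k^{-2})$), while for the threshold case $N-2m = 3$ the harmonic sum contributes the logarithmic factor in the second line of \eqref{sigmak}.

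For \eqref{express4}, the squared distance is $|x^+_{k,1,r}-x^-_{k,j,r}|^2 = 4r^2h^2 + 4r^2(1-h^2)\sin^2\frac{(j-1)\pi}{k}$. Since $h = h_k\to 0$ at rate $k^{-(N-2m-1)/(N-2m+1)}$, I would split the sum at the balance scale $(j-1)/k \sim h$. In the far regime the sum reduces to the one treated in \eqref{express3}, which after subtracting the pure polygon piece produces precisely the $\zeta_1(k) k^{N-2m}(r\sqrt{1-h^2})^{-(N-2m)}$ correction appearing on the right-hand side. In the near regime I approximate the sum by $\frac{k}{\pi}\int_0^{\infty}\bigl(4r^2h^2+4r^2(1-h^2)\theta^2\bigr)^{-(N-2m)/2}\,d\theta$; substituting $\theta = hs/\sqrt{1-h^2}$ gives the prefactor $\bigl(r^{N-2m}h^{N-2m-1}\sqrt{1-h^2}\bigr)^{-1}$ and the dimensionless integral $\int_0^{\infty}(1+s^2)^{-(N-2m)/2}\,ds$, which equals $2^{N-2m-1}\pi B_2$. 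The discrete-to-continuous Riemann-sum error at spacing $\pi/k$ is controlled by the integrand's derivative and yields $\zeta_2(k) = O((hk)^{-1})$.

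Expansions \eqref{a33} and \eqref{a34} are obtained by the standard bubble-interaction computation. After the change of variables $z = x^+_{k,1,r} + \Lambda^{-1}w$, the factor $U_{x^+_{k,1,r},\Lambda}^{m^*-1}(z)\,dz$ becomes a $\Lambda$-independent integrable kernel proportional to $(1+|w|^2)^{-(N+2m)/2}$. For large separation $d := |x^+_{k,1,r}-x_{\cdot}|$ one Taylor-expands
\[
U_{x_{\cdot},\Lambda}(z) = \frac{c_{N,m}}{\Lambda^{(N-2m)/2}d^{N-2m}}\Bigl(1 + O\bigl(|w|/(\Lambda d) + |w|^2/(\Lambda d)^2\bigr)\Bigr),
\]
and the linear correction integrates to zero against the radial profile $(1+|w|^2)^{-(N+2m)/2}$. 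The leading term then yields $B_0/(\Lambda^{N-2m}d^{N-2m})$ with $B_0$ as in the statement, while the quadratic correction gives the advertised $O(d^{-(N-2m+2-\epsilon_0)})$ error, the small loss $\epsilon_0$ absorbing the contribution of the annulus $|w|\sim \Lambda d$ where the Taylor expansion breaks down.

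The main obstacle is the near/far matching in \eqref{express4}: to recover both the explicit constant $B_2$ (from the near regime, a genuinely one-dimensional shifted integral) and the additional $\zeta_1(k)$ correction (from the far regime, essentially the diagonal sum of \eqref{express3} with a rescaled transverse offset), one must choose the cut-off between the two regimes carefully and verify that the interface terms are lower order. Keeping the factor $\sqrt{1-h^2}$ exact rather than absorbing it into $B_2$ also demands tracking the full $\sin^2$ rather than its leading Taylor coefficient.
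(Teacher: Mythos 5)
Your proposal follows the same route as the paper for all four expansions: the chord-length identity for \eqref{express3}, the integral approximation in the $h$-scaled variable for \eqref{express4}, and the centre-and-rescale plus Taylor expansion (with the linear term killed by parity) for \eqref{a33}--\eqref{a34}.

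One bookkeeping slip deserves attention in \eqref{express4}. The quantity $|x^{+}_{k,1,r}-x^{-}_{k,j,r}|^2 = 4r^2\bigl[(1-h^2)\sin^2\!\tfrac{(j-1)\pi}{k}+h^2\bigr]$ is small not only for $j\approx 1$ but also for $j\approx k$ (since $\sin\tfrac{(k-1)\pi}{k}=\sin\tfrac{\pi}{k}$), so the sum over $j=1,\dots,k$ equals, up to negligible error, \emph{twice} the sum over $j\le k/2$. When you cut at the balance scale $(j-1)/k\sim h$, you should therefore cut on both ends, and the Riemann-sum normalization is $\tfrac{2k}{\pi}\int_0^\infty$, not $\tfrac{k}{\pi}\int_0^\infty$. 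As written, your calculation recovers $B_2/2$ rather than $B_2$: tracking the prefactor through the substitution $\theta=hs/\sqrt{1-h^2}$ gives $\tfrac{k}{\pi}\cdot\tfrac{1}{(2rh)^{N-2m}}\cdot\tfrac{h}{\sqrt{1-h^2}}\cdot 2^{N-2m-1}\pi B_2 = \tfrac{B_2 k}{2\,r^{N-2m}h^{N-2m-1}\sqrt{1-h^2}}$, off by the missing factor of two. This is exactly the doubling the paper makes explicit by writing $\tfrac{2}{(2rh)^{N-2m}}\sum_{j=1}^{k/2}$ before passing to the integral. Everything else in your sketch is sound, including the observation that the Taylor remainder in \eqref{express3} produces $k^{-2}$ (or $\log k/k^2$ at the threshold $N=2m+3$) and that the $\epsilon_0$ loss in \eqref{a33}--\eqref{a34} absorbs the annular region $|w|\sim\Lambda d$ where the Taylor expansion of the far bubble degrades.
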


  \begin{proof}
  Without loss of generality, we assume $ k $ is even. Noting that for  $ \frac{1}{2} < \kappa_1 \leq \kappa_2\leq 1 $,  we have
  $ \kappa_1 \frac{j \pi}{k}
 \leq  \sin {\frac{j\pi}{k}} \leq \kappa_2 \frac{j \pi}{k}, \quad  ~j \in  \big\{1, \cdots,  \frac k2 \big\}.$
Direct computations show that
\begin{align*}
& \sum_{j=2}^k  \frac{1}{|x^{+}_{k,1,r}-x^{+}_{k,j,r}|^{N-2m}}
= \sum_{j=1}^k \Big(\frac{1}{2r \sqrt{1-h^2} \sin{\frac{j\pi}{k}}}\Big)^{N-2m} \nonumber
\\[2mm]
&=  \sum_{j=1}^{\frac k2} \,\Big(\frac{1}{2r \sqrt{1-h^2} \sin{\frac{j \pi}{k}}}\Big)^{N-2m}+\sum_{j= \frac k2+1}^{k} \,\Big(\frac{1}{2r \sqrt{1-h^2} \sin{\frac{j \pi}{k}}}\Big)^{N-2m}
\\[2mm]
&=\sum_{j=1}^{{[\frac k 6]}} \Big(\frac{1}{2r \sqrt{1-h^2} \sin{\frac{j\pi}{k}}}\Big)^{N-2m}
+O \Big(\frac{k}{\big(2r \sqrt{1-h^2}\big)^{N-2m}}\Big)+\sum_{j= \frac k2+1}^{k} \,\Big(\frac{1}{2r \sqrt{1-h^2} \sin{\frac{j \pi}{k}}}\Big)^{N-2m}\\[2mm]
&= \Big(\frac{k}{r \sqrt{1-h^2}}\Big)^{N-2m} \Big(\frac{1}{2}B_{1}+\zeta_1(k)\Big)+\sum_{j= \frac k2+1}^{k} \,\Big(\frac{1}{2r \sqrt{1-h^2} \sin{\frac{j \pi}{k}}}\Big)^{N-2m}\\[2mm]
&=\Big(\frac{k}{r \sqrt{1-h^2}}\Big)^{N-2m} \Big(B_{1}+\zeta_1(k)\Big).
\end{align*}
where  $  B_{1}$ and  $\zeta_1(k)  $ are defined in \eqref{sigmak}, thus we proved \eqref{express3}.

Similarly, we can obtain
\begin{equation*}
\begin{split}
& \sum_{j=1}^k  \frac{1}{|x^{+}_{k,1,r}-x^{-}_{k,j,r}|^{N-2m}}
= \sum_{j=1}^k \frac{1}{\Big(2r \big[(1-h^2) \sin^2 {\frac{(j-1) \pi}{k}}+h^2\big]^{\frac{1}{2}}\Big)^{N-2m}}
 \\[2mm]
 &= \frac{2}{(2r h)^{N-2m}} \sum_{j=1}^{\frac k2} \frac{1}{\Big(\frac{(1-h^2)}{h^2}\frac{(j-1)^2 \pi^2}{k^2}+1\Big)^{\frac{N-2m}{2}}}+\zeta_1(k)  O \Big(\Big(\frac{k}{r \sqrt{1-h^2}}\Big)^{N-2m}\Big)\\[2mm]
 &=\frac{2hk}{(2r h)^{N-2m}{\sqrt{1-h^2} \pi}}\, \int_0^{+\infty}\,\frac{1}{\big(s^2+1\big)^{\frac{N-2m}{2}}}\,{\mathrm d}s  \Big(1+O\big( (kh)^{-1} \big)\Big)+\zeta_1(k)  O \Big(\Big(\frac{k}{r \sqrt{1-h^2}}\Big)^{N-2m}\Big)\\[2mm]
 &=\frac{h}{(r h)^{N-2m}}  \frac{B_2 k}{\sqrt{1-h^2}} \Big(1+\zeta_2 (k) \Big)
+\zeta_1(k)O \Big(\frac{ k^{N-2m}}{\big(r \sqrt{1-h^2}\big)^{N-2m}}\Big).
\end{split}
\end{equation*}
where  $ B_2 $ and  $ \zeta_2(k) $  are defined in \eqref{B1sigmak}, \eqref{sigmak}.

 Next, we prove \eqref{a33}. Define $ \overline{d}_j =|x^{+}_{k,1,r}-x^{+}_{k,j,r}|, ~ \underline{d}_j =|x^{+}_{k,1,r}-x^{-}_{k,j,r}| $ for  $j=1, \cdots, k $. We consider
\begin{align} \label{A10}
 \int_{\mathbb{R}^N}  U_{x^{+}_{k,1,r}, \Lambda}^{m^*-1} U_{x^{+}_{k,i,r}, \Lambda}
 = &\int_{\mathbb{R}^N}  \frac{\Lambda^{\frac{N+2m} 2}}{(1+\Lambda^2|y-x^{+}_{k,1,r}|^{2})^{\frac{N+2m} 2}}  \frac{\Lambda^{\frac{N-2m}{2}}}{(1+\Lambda^2|y-x^{+}_{k,i,r}|^{2})^{\frac{N-2m}{2}}}  \nonumber
 \\[2mm]
  =& \int_{B_{\frac{\overline{d}_ i} 4}(x^{+}_{k,1,r})}\frac{\Lambda^{\frac{N+2m} 2}}{(1+\Lambda^2|y-x^{+}_{k,1,r}|^{2})^{\frac{N+2m} 2}}  \frac{\Lambda^{\frac{N-2m}{2}}}{(1+\Lambda^2|y-x^{+}_{k,i,r}|^{2})^{\frac{N-2m}{2}}}\\[2mm]\nonumber
  &+\int_{\mathbb{R}^N \setminus {B_{\frac{\overline  d_i} 4}(x^{+}_{k,1,r})}} \frac{\Lambda^{\frac{N+2m} 2}}{(1+\Lambda^2|y-x^{+}_{k,1,r}|^{2})^{\frac{N+2m} 2}}  \frac{\Lambda^{\frac{N-2m}{2}}}{(1+\Lambda^2|y-x^{+}_{k,i,r}|^{2})^{\frac{N-2m}{2}}}\\[2mm]\nonumber
  :=&N_1+N_2.
\end{align}
For the first term $N_1$, we have
\begin{align} \label{cal1}
 N_1 = &\int_{B_{\frac{\Lambda \overline{d}_i} 4}(0)}  \frac{1}{(1+z^{2})^{\frac{N+2m} 2}}  \frac{1}{(1+z^{2}+2\Lambda \langle z , x^{+}_{k,1,r}-x^{+}_{k,i,r} \rangle +\Lambda^2|x^{+}_{k,1,r}-x^{+}_{k,i,r}|^2)^{\frac{N-2m}{2}}}
\nonumber \\[2mm]\nonumber
 =& \frac 1 {\Lambda^{N-2m}|x^{+}_{k,1,r}-x^{+}_{k,i,r}|^{N-2m}} \int_{B_{\frac{\Lambda \overline{d}_i} 4}(0)}  \frac{1}{(1+z^{2})^{\frac{N+2m} 2}} \Bigg(1-\frac{N-2m}{2}  \frac{1+z^{2}+2 \Lambda\langle z , x^{+}_{k,1,r}-x^{+}_{k,i,r} \rangle}{\Lambda^{2}|x^{+}_{k,1,r}-x^{+}_{k,i,r}|^{2}}
 \\[2mm]
 &+O \Big(\Big(\frac{1+z^{2}+2  \Lambda\langle z , x^{+}_{k,1,r}-x^{+}_{k,i,r} \rangle}{\Lambda^{2}|x^{+}_{k,1,r}-x^{+}_{k,i,r}|^{2}}
\Big)^2\Big)\Bigg)\\[2mm]\nonumber
  =&\frac{B_0}{\Lambda^{N-2m}|x^{+}_{k,1,r}-x^{+}_{k,i,r}|^{N-2m}}
 +O\Big(\frac 1 {|x^{+}_{k,1,r}-x^{+}_{k,i,r}|^{N}}\Big)+O\Big(\frac 1 {|x^{+}_{k,1,r}-x^{+}_{k,i,r}|^{N-2m+2-\epsilon_0}}\Big)\\[2mm]\nonumber
 =&\frac{B_0}{\Lambda^{N-2m}|x^{+}_{k,1,r}-x^{+}_{k,i,r}|^{N-2m}}+O\Big(\frac 1 {|x^{+}_{k,1,r}-x^{+}_{k,i,r}|^{N-2m+2-\epsilon_0}}\Big).
\end{align}
 where  $B_0= \displaystyle \int_{\mathbb{R}^N}  \frac{1}{(1+z^{2})^{\frac{N+2m} 2}}  $.
For the second term $N_2$, we can easily get
\begin{align}  \label{A.16}
N_2 = O\Big(\frac 1 {|x^{+}_{k,1,r}-x^{+}_{k,i,r}|^{N-\epsilon_0}}\Big).
\end{align}
Combining \eqref{A10},\eqref{cal1},\eqref{A.16}, then we can get \eqref{a33}. Similarly, we can prove \eqref{a34}
 \end{proof}

			\begin{lemma}  \label{express9}
				Suppose that  $ K(|y|) $ satisfies   ${(\bf K)} $ and  $N\ge 2m+3 $, $(r,h,\Lambda) \in{{\mathscr D}_k}$.
				We have the expansion for $k \to \infty$
				\begin{align*}
					I(W_{r,h,\Lambda})
					\,=\,& kA_1-k \int_{\R^N}U_{x^{+}_{k,1,r}, \Lambda}^{m^*-1}\Big(\sum_{j=2}^k  U_{x^{+}_{k,j,r}, \Lambda}+\sum_{j=1}^k U_{x^{-}_{k,j,r}, \Lambda}\Big)
					\\[2mm]
					&+k\Big[\frac{A_2}{\Lambda^{l}{\mu_k}^{l}}
					+\frac{A_3}{\Lambda^{l-2}{\mu_k}^{l}}({\mu_k}-r)^2\Big]
					+k\frac{\mathcal {C}(r, \Lambda)}{{\mu_k}^{l}}  \big({\mu_k}-r\big)^{3}
					\\[2mm]
					&+k\frac{\mathcal {C}(r, \Lambda)}{{\mu_k}^{l+\sigma}}+k O\Big(\Big(\frac{k}{{\mu_k}}\Big)^{N-\epsilon_0}\Big)
					+k O\Big(\frac{1}{k^{l}} \Big(\frac{k}{{\mu_k}}\Big)^{N-2m}\Big),
				\end{align*}
				where $\mathcal {C}(r, \Lambda)$ denotes function independent of $h$ and should be order of $O(1)$,
				\begin{equation}\label{A1A2}
					A_1=\Big(1-\frac{2}{m^*}\Big) \int_{\R^N}|U_{0,1}|^{m^*},
					\quad
					A_2=\frac{2 c_0}{m^*} \int_{\R^N}|y_1|^{l}U_{0,1}^{m^*},
				\end{equation}
				\begin{equation}\label{B_0B_1}
					A_3=\frac{c_0 l(l-1)}{m^*}\int_{\R^N}|y_1|^{l-2}U_{0,1}^{m^*},
				\end{equation}
				and  $ \epsilon_0  $ is constant can be chosen small enough.
			\end{lemma}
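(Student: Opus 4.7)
The plan is to split $I(W_{r,h,\Lambda})$ into its quadratic and nonlinear parts and handle each separately, exploiting the PDE satisfied by each single bubble, the $O(2)\times\mathbb{Z}_k$ symmetry that reduces every spatial integral to one over $\Omega_1^+$, and the assumption $({\bf K})$ on the prescribed curvature.

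First I would handle the quadratic piece by integrating by parts $m$ times. Since each bubble satisfies $(-\Delta)^m U_{x^\pm_{k,i,r},\Lambda}=U_{x^\pm_{k,i,r},\Lambda}^{m^*-1}$, this yields
\[
\tfrac{1}{2}\int_{\mathbb{R}^N}\big|\Delta^{m/2} W_{r,h,\Lambda}\big|^2 \,=\, \tfrac{1}{2}\sum_{\alpha,\beta=1}^{2k}\int_{\mathbb{R}^N} U_\alpha^{m^*-1}\,U_\beta,
\]
where $\{U_\alpha\}_{\alpha=1}^{2k}$ enumerates the $2k$ bubbles. The diagonal contributes $2k\cdot\tfrac{1}{2}\int_{\mathbb{R}^N} U_{0,1}^{m^*}$; after using the $O(2)\times\mathbb{Z}_k$ symmetry, the off-diagonal contribution amounts to $k\int_{\mathbb{R}^N} U_{x^+_{k,1,r},\Lambda}^{m^*-1}\bigl(\sum_{j\geq 2}U_{x^+_{k,j,r},\Lambda}+\sum_j U_{x^-_{k,j,r},\Lambda}\bigr)$.

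Next I would expand the nonlinear part, which by symmetry equals $\tfrac{2k}{m^*}\int_{\Omega_1^+} K(|y|/\mu_k)|W_{r,h,\Lambda}|^{m^*}$. In $\Omega_1^+$ the bubble $U_{x^+_{k,1,r},\Lambda}$ dominates, so I would use the pointwise expansion
\[
W_{r,h,\Lambda}^{m^*}\,=\,U_{x^+_{k,1,r},\Lambda}^{m^*}+m^*\,U_{x^+_{k,1,r},\Lambda}^{m^*-1}\!\!\sum_{(j,\pm)\neq(1,+)} U_{x^\pm_{k,j,r},\Lambda}+R,
\]
where $R$ is pointwise controlled by the decay bounds of the bubbles and, after integration, produces the error terms $kO((k/\mu_k)^{N-\epsilon_0})$ and $kO(k^{-l}(k/\mu_k)^{N-2m})$ in the statement. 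I would then split $K(|y|/\mu_k)=1+[K(|y|/\mu_k)-1]$. Combining the $1$-piece with the quadratic computation and using $\tfrac{1}{2}-\tfrac{1}{m^*}=\tfrac{m}{N}$, together with $\int U_{0,1}^{m^*}=\int U_{0,1}^{m^*-1}U_{0,1}$, produces the leading $kA_1$ term and leaves the bubble--bubble interaction sum with exactly the sign displayed. For the $[K-1]$-piece I would invoke $({\bf K})$, write $||y|/\mu_k-1|^l=\mu_k^{-l}||y|-\mu_k|^l$, decompose $|y|-\mu_k=(|y|-|x^+_{k,1,r}|)+(|x^+_{k,1,r}|-\mu_k)$, change variables $y=x^+_{k,1,r}+z/\Lambda$, and Taylor-expand against $U_{0,1}^{m^*}$ in powers of $z$: the zeroth-order term yields $A_2/(\Lambda^l\mu_k^l)$, the linear-in-$z$ correction vanishes by the odd symmetry of the integrand, and the second-order contribution supplies $A_3(\mu_k-r)^2/(\Lambda^{l-2}\mu_k^l)$, matching precisely \eqref{A1A2}--\eqref{B_0B_1}.

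The main obstacle is this last step: extracting $A_2$ and $A_3$ cleanly while verifying that every other piece — in particular, the product of $[K-1]$ with the interaction sum $U_{x^+_{k,1,r},\Lambda}^{m^*-1}\sum_{(j,\pm)\neq(1,+)}U_{x^\pm_{k,j,r},\Lambda}$, and the cubic Taylor remainder in $(\mu_k-r)$ — fits into the announced remainders $k\mathcal{C}(r,\Lambda)\mu_k^{-l}(\mu_k-r)^3$ and $k\mathcal{C}(r,\Lambda)\mu_k^{-l-\sigma}$. This reduces to checking finiteness of the moment integrals $\int |z|^l U_{0,\Lambda}^{m^*}$, $\int |z|^{l-2}z_1^2 U_{0,\Lambda}^{m^*}$, and $\int |z|^{l+1}U_{0,\Lambda}^{m^*}$, which is guaranteed by the hypothesis $l\in[2,N-2m]$, and to verifying that the coefficients arising from the Taylor expansion depend only on $(r,\Lambda)$ — not on $h$ — which justifies collecting them in the symbol $\mathcal{C}(r,\Lambda)$.
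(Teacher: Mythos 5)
Your proposal follows essentially the same route as the paper: decompose $I=I_1-I_2$, use the PDE $(-\Delta)^mU=U^{m^*-1}$ and the $\mathbb{Z}_k\times\mathbb{Z}_2$ symmetry to reduce the quadratic part to the diagonal plus interaction sum, restrict $I_2$ to $\Omega_1^+$, expand $W^{m^*}$ pointwise, split $K=1+(K-1)$, and extract $A_1,A_2,A_3$ via the Taylor expansion of $\big||y+x^+_{k,1,r}|-\mu_k\big|^l$ against $U_{0,\Lambda}^{m^*}$. Two small remarks so the sketch compiles into a correct argument. First, the expansion that produces $A_2$ and $A_3$ is a Taylor expansion in powers of $(\mu_k-r)$, not in powers of $z$; the quantity that vanishes by odd symmetry is the term linear in $(\mu_k-r)$, which carries the factor $\int z_1|z_1|^{l-2}U_{0,\Lambda}^{m^*}\,dz=0$. (The factor $|z|^l$ is not smooth at $z=0$, so a genuine Taylor expansion in $z$ is not available for $l<2$ at the endpoint; what works is the expansion in the scalar parameter $\mu_k-r$ after approximating $|z/\Lambda+x^+_{k,1,r}|-r\approx z_1/\Lambda$.) Second, before invoking assumption $({\bf K})$ you must split the $[K-1]$-integral into the regions $\big||y|/\mu_k-1\big|\le\delta$ and $\ge\delta$; assumption $({\bf K})$ only gives the power expansion near $r_0=1$, and on the far region one uses the fast decay $|y-x^+_{k,1,r}|\gtrsim\delta\mu_k$ of the bubble to bound the contribution by $O(\mu_k^{-(N-\epsilon_0)})$. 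Once those two points are incorporated, your argument matches the paper's proof line by line.
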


			\begin{proof}
				Recalling  the definition of  $I(v) $ as in \eqref{I}, then we obtain that
				\begin{align}
					I(W_{r,h,\Lambda})
					\,=\,& \frac{1}{2} \int_{\mathbb{R}^N}|\nabla W_{r,h,\Lambda}|^2- \frac{1}{m^*} \int_{\mathbb{R}^N} K\Big(\frac{|y|}{{\mu_k}}\Big) W_{r,h,\Lambda}^{m^*}
					\nonumber\\[2mm]
					:=\,&I_1-I_2.
				\end{align}
				
				According to the expression of $W_{r,h,\Lambda}$, we have
				\begin{align}\label{estimateofI1}
					I_1
					\,=\,&\frac{1}{2}\sum_{j=1}^k \sum_{i=1}^k \int_{\mathbb{R}^N} (-\Delta)^m \Big(U_{x^{+}_{k,j,r}, \Lambda}+U_{x^{-}_{k,j,r}, \Lambda}\Big) \Big(U_{x^{+}_{k,i,r}, \Lambda}+U_{x^{-}_{k,i,r}, \Lambda}\Big)
					\nonumber\\[2mm]
					\,=\,&k \sum_{j=1}^k \int_{\mathbb{R}^N} \Big(U_{x^{+}_{k,1,r},\Lambda}^{m^*-1}U_{x^{+}_{k,j,r}, \Lambda}+U_{x^{-}_{k,1,r}, \Lambda}^{m^*-1} U_{x^{+}_{k,j,r}, \Lambda}\Big)
					\nonumber\\[2mm]
					\,=\,&k\int_{\mathbb{R}^N} \Big(U_{0,1}^{m^*}+\sum_{j=2}^kU_{x^{+}_{k,1,r}, \Lambda}^{m^*-1} U_{x^{+}_{k,j,r},\Lambda}\Big)
					\,+\,k\int_{\mathbb{R}^N}\sum_{j=1}^k U_{x^{-}_{k,1,r},\Lambda}^{m^*-1} U_{x^{+}_{k,j,r}, \Lambda}
					\nonumber\\[2mm]
					\,=\,& k\int_{\mathbb{R}^N}U_{0,1}^{m^*} +k\int_{\mathbb{R}^N} U_{x^{+}_{k,1,r}, \Lambda}^{m^*-1}\Big(\sum_{j=2}^kU_{x^{+}_{k,j,r}, \Lambda}\,+\,\sum_{i=1}^k  U_{x^{-}_{k,j,r}, \Lambda}\Big).
				\end{align}
				
				For  $I_2$,  using the symmetry of function  $W_{r,h,\Lambda}$,  we have
				\begin{align}\label{estimateofI2}
					I_2
					\,= \, &\frac{2k}{m^*} \int_{\Omega_1^{+}} K\Big(\frac{|y|}{{\mu_k}}\Big)  W_{r,h,\Lambda}^{m^*}
					\nonumber\\[2mm]
					\,= \, &  \frac{2k}{m^*} \int_{\Omega_1^{+}} K\Big(\frac{|y|}{{\mu_k}}\Big)  \Bigg\{U_{x^{+}_{k,1,r}, \Lambda}^{m^*}+m^* U_{x^{+}_{k,1,r}, \Lambda}^{m^*-1}  \Big(\sum_{j=2}^k U_{x^{+}_{k,j,r}, \Lambda}
					+\sum_{j=1}^k U_{x^{-}_{k,j,r}, \Lambda}\Big)
					\nonumber
					\Bigg\}+kO\Big(\Big(\frac{k}{{\mu_k}}\Big)^{N-\epsilon_0}\Big)
					\nonumber\\
					\,:=\,&\frac{2k}{m^*} \Big(I_{21}+I_{22}\Big)+kO\Big(\Big(\frac{k}{{\mu_k}}\Big)^{N-\epsilon_0}\Big).
				\end{align}

				
				
				For  $I_{21} $, we can rewrite it as following
				\begin{align*}
					I_{21}
					&=  \int_{\Omega_1^{+}} U_{x^{+}_{k,1,r}, \Lambda}^{m^*}+\int_{\Omega_1^{+}}  \Big[K\Big(\frac{|y|}{{\mu_k}}\Big)-1\Big]  U_{x^{+}_{k,1,r}, \Lambda}^{m^*}
					\nonumber\\[2mm]
					&= \int_{\mathbb{R}^N} U_{0,1}^{m^*}+\int_{\Omega_1^{+}}  \Big[K\Big(\frac{|y|}{{\mu_k}}\Big)-1\Big]  U_{x^{+}_{k,1,r}, \Lambda}^{m^*}+O \Big(\Big(\frac{k}{{\mu_k}}\Big)^{N}\Big).
				\end{align*}
				Furthermore, we  obtain
				\begin{align*}
					\int_{\Omega_1^{+}}  \Big[K\Big(\frac{|y|}{{\mu_k}}\Big)-1\Big]  U_{x^{+}_{k,1,r}, \Lambda}^{m^*}
					=& \int_{\Omega_1^{+} \cap \big\{y :\,|\frac{|y|}{\mu_k}-1|\ge \delta  \big\}}\Big[K\Big(\frac{|y|}{{\mu_k}}\Big)-1\Big]  U_{x^{+}_{k,1,r}, \Lambda}^{m^*}
					\\[2mm]
					&+\int_{\Omega_1^{+} \cap \big\{y :\,|\frac{|y|}{\mu_k}-1|\le \delta  \big\}} \Big[K\Big(\frac{|y|}{{\mu_k}}\Big)-1\Big]  U_{x^{+}_{k,1,r}, \Lambda}^{m^*}.
				\end{align*}
				If   $|\frac{|y|}{\mu_k}-1|\ge \delta  $, there holds
				\begin{align*}
					|y-x^{+}_{k,1,r}| \ge \big||y |-{\mu_k}\big|\,-\,\big|{\mu_k} -|x^{+}_{k,1,r}|\big|\ge \frac{1}{2} \delta {\mu_k}.
				\end{align*}
				Thus we can easily get
				\begin{align*}
					\int_{\Omega_1^{+} \cap \big\{y :\,|\frac{|y|}{\mu_k}-1|\ge \delta  \big\}} \Big[K\Big(\frac{|y|}{{\mu_k}}\Big)-1\Big]  U_{x^{+}_{k,1,r}, \Lambda}^{m^*} \le  \frac{C}{{\mu_k}^{N-\epsilon_0}}.
				\end{align*}
				If  $|\frac{|y|}{\mu_k}-1|\le \delta  $,  recalling the decay property of  $ K $,  we can obtain that
				\begin{align*}
					&\int_{\Omega_1^{+} \cap \big\{y :\,|\frac{|y|}{\mu_k}-1|\le \delta  \big\}} \Big[K\Big(\frac{|y|}{{\mu_k}}\Big)-1\Big]  U_{x^{+}_{k,1,r}, \Lambda}^{m^*}
					\\[2mm]
					&= -c_0 \frac{1}{{\mu_k}^{l}} \int_{\Omega_1^{+} \cap \big\{y :\,|\frac{|y|}{\mu_k}-1|\le \delta  \big\}}\big||y|-{\mu_k} \big|^{l} \, U_{x^{+}_{k,1,r}, \Lambda}^{m^*}
					\\[2mm]
					& \quad+O\Big(\frac{1}{{\mu_k}^{l+\sigma} }  \int_{\Omega_1^{+} \cap \big\{y :\,|\frac{|y|}{\mu_k}-1|\le \delta  \big\}}\big||y|-{\mu_k} \big|^{l+\sigma} \, U_{x^{+}_{k,1,r}, \Lambda}^{m^*} \Big)
					\\[2mm]
					&= -c_0 \frac{1}{{\mu_k}^{l}} \int_{\R^N}\big||y|-{\mu_k} \big|^{l} \, U_{x^{+}_{k,1,r}, \Lambda}^{m^*}
					+O \Big(\int_{\R^N \setminus B_{\frac{{\mu_k}}{k}}(x^{+}_{k,1,r})}  \Big(\frac{|y|^{l}}{{\mu_k}^{l}}+1\Big)  U_{x^{+}_{k,1,r},\Lambda}^{m^*}\Big)
					\\[2mm]
					& \quad+O\Big(\frac{1}{{\mu_k}^{l+\sigma} } \int_{\Omega_1^{+} \cap \big\{y :\,|\frac{|y|}{\mu_k}-1|\le \delta  \big\}}\big||y|-{\mu_k} \big|^{l+\sigma} \, U_{x^{+}_{k,1,r}, \Lambda}^{m^*} \Big)
					\\[2mm] &
					= -c_0 \frac{1}{{\mu_k}^{l}} \int_{\R^N}
					\big||y+x^{+}_{k,1,r}|-{\mu_k} \big|^{l} \, U_{0, \Lambda}^{m^*}
					\\[2mm]
					& \quad+O\Big(\frac{1}{{\mu_k}^{l+\sigma} } \int_{\Omega_1^{+} \cap \big\{y :\,|\frac{|y|}{\mu_k}-1|\le \delta  \big\}}\big||y|-{\mu_k} \big|^{l+\sigma} \, U_{x^{+}_{k,1,r}, \Lambda}^{m^*} \Big)+O  \Big(\Big(\frac{k}{{\mu_k}}\Big)^{N-\epsilon_0}\Big).
				\end{align*}
				Furthermore, recalling  $|x^{+}_{k,1,r}|= r $ and using the symmetry property,  we have
				\begin{align*}
					\int_{\R^N} \big||y+x^{+}_{k,1,r}|-{\mu_k} \big|^{l}U_{0,\Lambda}^{m^*}
					=  \int_{\R^N} \big||y+e_1 r|-{\mu_k} \big|^{l}U_{0,\Lambda}^{m^*},
				\end{align*}
				where  $ e_1=(1, 0,\cdots, 0) $.
				
				We  get
				\begin{align*}
					&\int_{\R^N}||y+x^{+}_{k,1,r}|-{\mu_k}|^{l}U_{0,\Lambda}^{m^*}\\[2mm]
					=&\int_{\R^N}|y_1|^{l}U_{0,\Lambda}^{m^*}+\frac12 l(l-1) \int_{\R^N}|y_1|^{l-2}U_{0,\Lambda}^{m^*}({\mu_k} -r)^2
					+\mathcal {C}(r, \Lambda)({\mu_k} -r)^{3},
				\end{align*}
				here $\mathcal {C}(r, \Lambda)$ denote functions which are independent of  $h$ and can be absorbed in  $O(1) $.

				Similarly, we can also have the following expression
				\begin{align*}
					&O\Big(\frac{1}{{\mu_k}^{l+\sigma} } \int_{\Omega_1^{+} \cap \big\{y :\,|\frac{|y|}{\mu_k}-1|\le \delta  \big\}}\big||y|-{\mu_k} \big|^{l+\sigma} \, U_{x^{+}_{k,1,r}, \Lambda}^{m^*} \Big)
					\\[2mm]
					&=O\Big(\frac{1}{{\mu_k}^{l+\sigma} } \int_{\R^N}\big||y|-{\mu_k} \big|^{l+\sigma} \, U_{x^{+}_{k,1,r}, \Lambda}^{m^*} \Big)+O  \Big(\Big(\frac{k}{{\mu_k}}\Big)^{N-\epsilon_0}\Big)
					\\[2mm]
					&=  \frac{\mathcal {C}(r, \Lambda)}{{\mu_k}^{l+\sigma}}
					\,+\,O  \Big(\Big(\frac{k}{{\mu_k}}\Big)^{N-\epsilon_0}\Big).
				\end{align*}

				Then, we can obtain that
				\begin{align}\label{estimateofI21}
					I_{21}
					= &
					\int_{\R^N}|U_{0,1}|^{m^*}-\frac{c_0}{\Lambda^{l}{\mu_k}^{l}}
					\int_{\R^N}|y_1|^{l}U_{0,1}^{m^*}
					\nonumber\\[2mm]
					&-\frac12 l(l-1)\frac{c_0}{\Lambda^{l-2}{\mu_k}^{l}}
					\int_{\R^N}|y_1|^{l-2}U_{0,1}^{m^*}
					({\mu_k} -r)^2
					\nonumber\\[2mm]
					&+\frac{\mathcal {C}(r, \Lambda)}{{\mu_k}^{l}}({\mu_k} -r)^{3}
					+\frac{\mathcal {C}(r, \Lambda)}{{\mu_k}^{l+\sigma}}
					+O\Big(\Big(\frac{k}{{\mu_k}}\Big)^{N-\epsilon_0}\Big).
				\end{align}

				Finally, we consider  $I_{22} $
	\begin{equation}\label{I22}
	    \begin{aligned}
					I_{22}
					\,=\,&m^{*} \int_{\Omega_1^{+}}  U_{x^{+}_{k,1,r}, \Lambda}^{m^*-1}  \Big(\sum_{j=2}^k  U_{x^{+}_{k,j,r}, \Lambda}+\sum_{j=1}^k U_{x^{-}_{k,j,r}, \Lambda}\Big)\\
					&+m^{*}\int_{\Omega_1^{+}} \Big[K\Big(\frac{|y|}{{\mu_k}}\Big) -1\Big]  U_{x^{+}_{k,1,r}, \Lambda}^{m^*-1}  \Big(\sum_{j=2}^k  U_{x^{+}_{k,j,r}, \Lambda}+\sum_{j=1}^k U_{x^{-}_{k,j,r}, \Lambda}\Big)\\
					=&k \Big(1-\frac{2}{m^*}\Big) \int_{\R^N}|U_{0,1}|^{m^*}-k \int_{\R^N}  U_{x^{+}_{k,1,r}, \Lambda}^{m^*-1}  \Big(\sum_{j=2}^k  U_{x^{+}_{k,j,r}, \Lambda}+\sum_{j=1}^k U_{x^{-}_{k,j,r}, \Lambda}\Big)\\
     &+O\Big(\Big(\frac{k}{{\mu_k}}\Big)^{N-\epsilon_0}\Big)+O \Big(\frac{1}{k^{l}} \Big(\frac{k}{{\mu_k}}\Big)^{N-2m}\Big).\\
				\end{aligned}
	\end{equation}

				Combining \eqref{estimateofI1}, \eqref{estimateofI2}, \eqref{estimateofI21}, we can get
				\begin{align*}
					I(W_{r,h,\Lambda})
					\,=\,&k \Big(1-\frac{2}{m^*}\Big) \int_{\R^N}|U_{0,1}|^{m^*}-k \int_{\R^N}  U_{x^{+}_{k,1,r}, \Lambda}^{m^*-1}  \Big(\sum_{j=2}^k  U_{x^{+}_{k,j,r}, \Lambda}+\sum_{j=1}^k U_{x^{-}_{k,j,r}, \Lambda}\Big)
					\\[2mm]
					&+\frac{2k}{m^*} \Big[\frac{c_0}{\Lambda^{l}{\mu_k}^{l}}
					\int_{\R^N}|y_1|^{l}U_{0,1}^{m^*}+\frac{c_0 l(l-1)}{2 \Lambda^{l-2}{\mu_k}^{l}}
					\int_{\R^N}|y_1|^{l-2}U_{0,1}^{m^*}({\mu_k}-r)^2\Big]
					\\[2mm]
					&+k\frac{\mathcal {C}(r, \Lambda)}{{\mu_k}^{l}}({\mu_k} -r)^{3}
					+k\frac{\mathcal {C}(r, \Lambda)}{{\mu_k}^{l+\sigma}}+k O\Big(\Big(\frac{k}{{\mu_k}}\Big)^{N-\epsilon_0}\Big)+k O\Big(\frac{1}{k^{l}} \Big(\frac{k}{{\mu_k}}\Big)^{N-2m}\Big).
				\end{align*}		
			\end{proof}

			\medskip
			Combining Lemmas \ref{express7}-\ref{express9}, we can get
			the following Proposition which gives the expression of  $ I(W_{r,h,\Lambda}).  $
			\medskip
			
			\begin{proposition} \label{func}
				Suppose that  $ K(|y|)  $ satisfies  ${(\bf K)} $ and  $N\ge 2m+3 $, $(r,h,\Lambda) \in{{\mathscr D}_k}$. Then we have
    \begin{align} \label{express5}
					I(W_{r,h,\Lambda})
					\,=\,& kA_1 -\frac{k}{\Lambda^{N-2m}}\Big[\,\frac{B_4 k^{N-2m}}{(r \sqrt{1-h^2})^{N-2m}}\,+\, \frac{B_5 k}{r^{N-2m} h^{N-2m-1} \sqrt {1-h^2}}\,\Big]
					\nonumber\\[2mm]
					&+k\Big[\frac{A_2}{\Lambda^{l}  k^{\frac{l(N-2m)}{N-2m-l}}}
					+\frac{A_3}{\Lambda^{l-2}  k^{\frac{l(N-2m)}{N-2m-l}}}({\mu_k}-r)^2\Big]
					+k\frac{\mathcal {C}(r, \Lambda)}{k^{\frac{l(N-2m)}{N-2m-l}}}({\mu_k} -r)^{3}
					\nonumber\\[2mm]
					&+k\frac{\mathcal {C}(r, \Lambda)}{k^{\frac{l(N-2m)}{N-2m-l}+\sigma}}
					+kO\Big(\frac{1}{k^{\big(\frac{l(N-2m)}{N-2m-l}+\frac{2(N-2m-1)}{N-2m+1}+\sigma\big)}}\Big),
				\end{align}
			as $k \to \infty$, where $ A_i,(i=1,2,3), B_4, B_5  $ are positive constants.
		\end{proposition}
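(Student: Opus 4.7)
The proposition is a consolidated energy expansion, and it should follow by combining the two preceding lemmas with a careful bookkeeping of error orders. My plan is to start from the intermediate decomposition furnished by Lemma \ref{express9}, in which the only object not yet explicitly expanded is the bubble-interaction integral
\[
\mathcal{J} := \int_{\R^N} U_{x^{+}_{k,1,r},\Lambda}^{m^*-1}\Big(\sum_{j=2}^k U_{x^{+}_{k,j,r},\Lambda}+\sum_{j=1}^k U_{x^{-}_{k,j,r},\Lambda}\Big).
\]
The other pieces appearing in Lemma \ref{express9}---the local $A_2, A_3$ contributions, the $\mathcal{C}(r,\Lambda)$ cubic and $\mu_k^{-l-\sigma}$ terms, and the $O((k/\mu_k)^{N-\epsilon_0})$ and $O(k^{-l}(k/\mu_k)^{N-2m})$ remainders---are already in the form claimed by \eqref{express5} once one substitutes $\mu_k=k^{(N-2m)/(N-2m-l)}$.

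To evaluate $\mathcal{J}$, I would apply the pointwise two-bubble expansions \eqref{a33}-\eqref{a34} of Lemma \ref{express7} term by term, replacing each integral $\int U_{x^{+}_{k,1,r},\Lambda}^{m^*-1} U_{x^{\pm}_{k,j,r},\Lambda}$ by the leading Coulomb term $B_0\Lambda^{-(N-2m)}|x^{+}_{k,1,r}-x^{\pm}_{k,j,r}|^{-(N-2m)}$ plus an $O(|x^{+}_{k,1,r}-x^{\pm}_{k,j,r}|^{-(N-2m+2-\epsilon_0)})$ correction. Summing the leading parts over $j$ and invoking the two series identities \eqref{express3}-\eqref{express4} then produces exactly
\[
\frac{1}{\Lambda^{N-2m}}\Big[\frac{B_4 k^{N-2m}}{(r\sqrt{1-h^2})^{N-2m}}+\frac{B_5 k}{r^{N-2m}h^{N-2m-1}\sqrt{1-h^2}}\Big],\qquad B_4:=B_0 B_1,\;\; B_5:=B_0 B_2,
\]
which is the bracketed contribution displayed in \eqref{express5}.

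The delicate step, and the main obstacle I foresee, will be showing that every leftover error collapses into the single final remainder $kO(k^{-(l(N-2m)/(N-2m-l)+2(N-2m-1)/(N-2m+1)+\sigma)})$. Four error sources must be tracked in the parameter range $(r,h,\Lambda)\in{{\mathscr D}_k}$, where $r\asymp \mu_k$ and $h\asymp h_0 k^{-(N-2m-1)/(N-2m+1)}$: (a) the factor $\zeta_1(k)$ of \eqref{sigmak}, which scales like $k^{-2}$ (or $k^{-2}\ln k$ when $N=2m+3$); (b) the factor $\zeta_2(k)=O((hk)^{-1})=O(k^{-2/(N-2m+1)})$ multiplying the $B_5$ term, which should give the tightest constraint on the final exponent; (c) the near-field corrections $O(|x^{+}_{k,1,r}-x^{\pm}_{k,j,r}|^{-(N-2m+2-\epsilon_0)})$ from \eqref{a33}-\eqref{a34}, whose sum over $j$ is estimated by an argument analogous to \eqref{express3} after raising the exponent; and (d) the leftover Lemma \ref{express9} error terms. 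A direct exponent count using $\mu_k=k^{(N-2m)/(N-2m-l)}$ then shows that each of (a)-(d) is bounded by the target error, and \eqref{express5} follows.
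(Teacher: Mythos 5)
Your proposal is essentially the paper's own proof: substitute the two-bubble expansions \eqref{a33}--\eqref{a34} and the lattice sums \eqref{express3}--\eqref{express4} of Lemma \ref{express7} into the intermediate formula of Lemma \ref{express9}, set $B_4=B_0B_1$, $B_5=B_0B_2$, and absorb the residual errors into the final remainder. One small bookkeeping correction: the binding error is not the $\zeta_2$ contribution, which always gains a margin $\tfrac{2}{N-2m+1}-\sigma>0$ over the target exponent, but rather the summed near-field correction $(k/\mu_k)^{N-2m+2-\epsilon_0}$ coming from \eqref{a33}--\eqref{a34}, whose margin $\tfrac{l(2-\epsilon_0)}{N-2m-l}-\tfrac{2(N-2m-1)}{N-2m+1}-\sigma$ is positive only for $l>\tfrac{N-2m-1}{2}$; this is exactly where the hypothesis $l>l_{N,m}$ enters, as the paper notes in this step of its proof.
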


		\begin{proof}
			A direct result of Lemmas \ref{express7}-\ref{express9} is
			\begin{align*}
				I(W_{r,h,\Lambda})
				\,=\,& k A_1 \,-\,\frac{k}{\Lambda^{N-2m}} \Big[\,\frac{B_4k^{N-2m}}{(r \sqrt{1-h^2})^{N-2m}}\,+\,\frac{B_5k}{r^{N-2m} h^{N-2m-1} \sqrt {1-h^2}}\,\Big]
				\nonumber\\[2mm]
				&\,+\,k\Big[\frac{A_2}{\Lambda^{l}{\mu_k}^{l}}\,+\,\frac{A_3}{\Lambda^{l-2}{\mu_k}^{l}}({\mu_k}-r)^2\Big]
				\,+\,k\frac{\mathcal {C}(r,\Lambda)}{k^{\frac{l(N-2m)}{N-2m-l}}}({\mu_k} -r)^{3}
				\\[2mm]
				&\,+\,k\frac{\mathcal {C}(r,\Lambda)}{k^{\frac{(N-2m)l}{N-2m-l}+\sigma}}
				\,+\,kO\Big(\Big(\frac{k}{{\mu_k}}\Big)^{N-\epsilon_0}\Big)
				\,+\,kO\Big(\frac{1}{k^{l}} \Big(\frac{k}{{\mu_k}}\Big)^{N-2m}\Big)
				\nonumber \\[2mm]
				&\,+\,kO\Big(\frac{\zeta_1(k) k^{N-2m}}{\big(r \sqrt{1-h^2}\big)^{N-2m}}\Big)
				+k O\Big(\frac{ \zeta_2(k)k}{r^{N-2m} h^{N-2m-1} \sqrt {1-h^2}}\Big),
			\end{align*}
			with $B_4=B_0B_1, B_5=B_0B_2$ are positive constants. From the expressions of  $ \zeta_1(k), \zeta_2(k)  $ and asymptotic expression  of  $h, r $ as in \eqref{definitionofsk},\eqref{sigmak}  $,  $ we can show that
			\[ \frac{\zeta_1(k) k^{N-2m}}{\big(r \sqrt{1-h^2}\big)^{N-2m}},
			\quad \frac{\zeta_2(k)k}{r^{N-2m} h^{N-2m-1} \sqrt {1-h^2}} , \]
			can be absorbed in  $ O\Big(\frac{1}{k^{\big(\frac{l(N-2m)}{N-2m-l}+\frac{2(N-2m-1)}{N-2m+1}+\sigma\big)}}\Big). $
			
			\medskip
			Noting that
			$ l > l_1 $ implies
			\[  \frac{N-2m-1}{N-2m+1} < \frac{l}{N-2m-1},  \]
			thus provided with  $ \epsilon_0, \sigma$ small enough, we can get
			\[\Big(\frac{k}{{\mu_k}}\Big)^{N-2m+2-\epsilon_0}
			= \frac 1 {k^{\frac{l(2-\epsilon_0)}{N-2m-l}}} \frac 1 {k^{\frac{l(N-2m)}{N-2m-l}}}
			\le C \frac{1}{k^{\big(\frac{l(N-2m)}{N-2m-l}+\frac{2(N-2m-1)}{N-2m+1}+\sigma\big)}}. \]
			Since  $ l\ge 2 $, we can check that
			\[ \frac{1}{k^{l}} \Big(\frac{k}{{\mu_k}}\Big)^{N-2m}  \le \frac{C}{k^{\big(\frac{l(N-2m)}{N-2m-l}+\frac{2(N-2m-1)}{N-2m+1}+\sigma\big)}}.\]
			Thus we can get \eqref{express5}.
		\end{proof}

		\medskip
		From Proposition \ref{pro2.4},  in order to show the existence of critical point of $I(u)$, we would   find a critical point of  function $F(r,h,\Lambda):= I( W_{r, h, \Lambda}+\phi_{r,h, \Lambda}) $.
		To get the expansions of $\frac{F(r,h,\Lambda)}{\partial \Lambda}, \frac{F(r,h,\Lambda)}{\partial h}$, we need the following expansions  for $ \frac{\partial I(W_{r,h,\Lambda})}{\partial \Lambda},  \frac{\partial I(W_{r,h,\Lambda})}{\partial h}$.
  \begin{proposition}\label{func2}
			Suppose that  $ K(|y|)  $ satisfies   ${(\bf K)} $ and  $N\ge 2m+3 $, $(r,h,\Lambda) \in{{\mathscr D}_k}$.
			We have
			\begin{align}\label{express55}
				\frac{\partial I(W_{r,h,\Lambda})}{\partial \Lambda}
				\,=\,&\frac{k (N-2m)}{\Lambda^{N-2m+1}} \Big[\frac{B_4 k^{N-2m}}{(r \sqrt{1-h^2})^{N-2m}}+\frac{B_5 k}{r^{N-2m} h^{N-2m-1} \sqrt {1-h^2}}\,\Big] \nonumber
				\\[2mm]
				&-k \Big[\,\frac{l A_2}{\Lambda^{l+1}k^{\frac{(N-2m)l}{N-2m-l}}}+\frac{(l-2) A_3}{\Lambda^{l-1} k^{\frac{(N-2m)l}{N-2m-l}}} ({\mu_k}-r)^2\,\Big]
				+kO \Big(\frac 1 {k^{\frac{(N-2m)l}{N-2m-l}+\sigma}}\Big),
			\end{align}
   and
   \begin{align} \label{express6}
				\frac{\partial I(W_{r,h,\Lambda})}{\partial h}
				\,=\,&  -\frac{k}{\Lambda^{N-2m}}\Big[\, (N-2m) \frac{B_4 k^{N-2m}}{ r^{N-2m}( \sqrt{1-h^2})^{N-2m+2}} h -(N-2m-1) \frac{B_5 k}{ r^{N-2m}h^{N-2m} \sqrt {1-h^2}}  \,\Big]
				\nonumber\\[2mm]
				& \quad  +kO\Big(\frac{1}{k^{\big(\frac{l(N-2m)}{N-2m-l}+\frac{(N-2m-1)}{N-2m+1}+M_{m,N,l}-\epsilon_0\big)}}\Big),
			\end{align}
			as $k \to \infty$, where the constants  $B_i, i= 4,5 $ and  $A_i, i=2,3 $ are  defined in Proposition \ref{func}, and $\epsilon_0$ is a small fix constant.
			
		\end{proposition}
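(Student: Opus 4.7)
The plan is to differentiate $I(W_{r,h,\Lambda})$ directly by exploiting the fact that each Aubin--Talenti bubble solves the critical equation $(-\Delta)^m U_{x,\Lambda}=U_{x,\Lambda}^{m^*-1}$. Summing over the $2k$ centers gives $(-\Delta)^m W_{r,h,\Lambda}=\sum_{j=1}^k(U_{x^+_{k,j,r},\Lambda}^{m^*-1}+U_{x^-_{k,j,r},\Lambda}^{m^*-1})$, so that for either parameter $\theta\in\{\Lambda,h\}$ the derivative reads
\[
\frac{\partial I(W_{r,h,\Lambda})}{\partial\theta}=\int_{\mathbb{R}^N}\Big[\sum_{j=1}^k\bigl(U_{x^+_{k,j,r},\Lambda}^{m^*-1}+U_{x^-_{k,j,r},\Lambda}^{m^*-1}\bigr)-K\bigl(\tfrac{|y|}{\mu_k}\bigr)W_{r,h,\Lambda}^{m^*-1}\Big]\frac{\partial W_{r,h,\Lambda}}{\partial\theta}.
\]
I split the integrand into an \emph{interaction} part $\sum_j U_j^{m^*-1}-W^{m^*-1}$ and a \emph{curvature} part $(1-K)W^{m^*-1}$, and use the $\mathbb{Z}_k\times\mathbb{Z}_2$ symmetry of the configuration to reduce the integral to the fundamental region $\Omega_1^+$.

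For the $\Lambda$-derivative the argument follows the template of Proposition~4.4 in \cite{guo2}. Inserting $\partial_\Lambda W=\sum_j(\overline{\mathbb{Z}}_{3j}+\underline{\mathbb{Z}}_{3j})$ into the interaction piece and invoking Lemma~\ref{express7} (which already carries explicit $\Lambda$'s in the main terms) yields the leading order $(N-2m)\Lambda^{-(N-2m+1)}[B_4 k^{N-2m}(r\sqrt{1-h^2})^{-(N-2m)}+B_5 k(r^{N-2m}h^{N-2m-1}\sqrt{1-h^2})^{-1}]$, while the curvature piece produces the $A_2$ and $A_3$ terms after Taylor-expanding $K$ about $r_0$ as in hypothesis~$(\mathbf{K})$. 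This step is essentially a formal differentiation of Proposition~\ref{func}, because $\partial_\Lambda U$ inherits the same radial decay as $U$ up to a constant, so the $O(k^{-l(N-2m)/(N-2m-l)-\sigma})$ error of Proposition~\ref{func} survives the differentiation.

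The $h$-derivative is more delicate. By \eqref{estimatekernel} with $\ell=2$, $\partial_h U_{x^\pm_{k,j,r},\Lambda}$ carries an extra factor of $r\sim\mu_k$, so the natural small scale in this derivative combines the $\Lambda$-scale $\mu_k^{-1}$ and the $h$-scale $\lambda_k=h_0 k^{-(N-2m-1)/(N-2m+1)}$. The main contribution comes from differentiating the bubble--bubble interaction formulas \eqref{express3}--\eqref{express4} in $h$, where
\[
\partial_h\bigl[(r\sqrt{1-h^2})^{-(N-2m)}\bigr]=(N-2m)\,h\,(r\sqrt{1-h^2})^{-(N-2m+2)},
\]
\[
\partial_h\bigl[(r^{N-2m}h^{N-2m-1}\sqrt{1-h^2})^{-1}\bigr]\sim -(N-2m-1)\bigl(r^{N-2m}h^{N-2m}\sqrt{1-h^2}\bigr)^{-1},
\]
producing the main term in \eqref{express6} after combining with the factor $\Lambda^{-(N-2m)}$.

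The main obstacle is the precise bookkeeping of the error term. The quantity $M_{m,N,l}$ in \eqref{sigma} is a minimum of four exponents because the $\partial_h$-differentiation generates four competing error sources that cannot all be controlled by a single scale: (i) $h$-differentiated remainders $\zeta_1(k),\zeta_2(k)$ from Lemma~\ref{express7}; (ii) the second-order Taylor remainder in the expansion of $K$ interacting with $\partial_h$; (iii) the remainder in replacing $W^{m^*-1}$ by $\sum U^{m^*-1}$ on the non-diagonal region, where $4m$ and $\tfrac{N+2m}{2}+\tau$ trade dominance; and (iv) the $O(|x-x^\pm|^{-(N-2m+2-\epsilon_0)})$ tail in \eqref{a33}--\eqref{a34}. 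One cannot simply differentiate the expansion of Proposition~\ref{func}, since that does not automatically bound the derivative of the error; instead each error estimate must be re-derived with $\partial_h$ already inside the integral, tracked against both the $\mu_k^{-1}$ and the $\lambda_k$ scales, and the minimum defining $M_{m,N,l}$ records which of these four intrinsic obstructions dominates for the admissible range of $(N,m,l)$ in \eqref{assumptionform}.
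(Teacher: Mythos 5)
Your proof follows essentially the same route as the paper: differentiate $I(W_{r,h,\Lambda})$ via the weak-form identity $\partial_\theta I(W)=\int_{\mathbb{R}^N}\big[(-\Delta)^m W - K W^{m^*-1}\big]\partial_\theta W$ (equivalently, the paper's split $I=I_1-I_2$), push $\partial_\theta$ through the bubble--bubble interaction formulas of Lemma~\ref{express7} to extract the main terms, defer the $\Lambda$-derivative to the analogous computation cited from \cite{Wei2}, and treat the $h$-derivative by explicit differentiation of $(r\sqrt{1-h^2})^{-(N-2m)}$ and $(r^{N-2m}h^{N-2m-1}\sqrt{1-h^2})^{-1}$. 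One arithmetic slip: you wrote $\partial_h\big[(r\sqrt{1-h^2})^{-(N-2m)}\big]=(N-2m)h\,(r\sqrt{1-h^2})^{-(N-2m+2)}$, but the $r$-power does not change under $\partial_h$; the correct value is $(N-2m)h\,r^{-(N-2m)}(1-h^2)^{-(N-2m+2)/2}$, which is indeed what appears in \eqref{express6} with $r^{N-2m}$ in the denominator. Your diagnosis of $M_{m,N,l}$ as a minimum over four competing error sources is plausible but not verified against the paper (which labels this step ``tedious but straightforward'' and gives no breakdown); the substantive observation that one cannot simply differentiate the remainder of Proposition~\ref{func}, because $\partial_h U_{x^\pm_{k,j,r},\Lambda}$ carries the extra $r\sim\mu_k$ factor from \eqref{estimatekernel}, is correct and is precisely why the paper re-derives the error in \eqref{frac h} rather than differentiating \eqref{express5} term by term.
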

		
		\begin{proof} The proof of \eqref{express55} is similar to the proof of Proposition 4.4 in \cite{Wei2}. For brevity, We omit it here and focus on the proof of \eqref{express6}.
			Recall that
			\begin{align} \label{estimate kernel}
				\overline{\mathbb{Z}}_{2j}
				\le  C  \frac{r} {(1+|y-x^{+}_{k,j,r}|  )^{N-2m+1}  },
				\quad   \underline{\mathbb{Z}}_{2j}  \le  C  \frac{r} {(1+|y-x^{-}_{k,j,r}|  )^{N-2m+1}  }.
			\end{align}
			We know that

\begin{eqnarray*}
		\frac{\partial I(W_{r,h,\Lambda})}{\partial h}=   \left\{\begin{array}{rcl}\displaystyle\frac{1}{2}  \frac{\partial } {\partial h } \displaystyle\int_{\mathbb{R}^N}|\nabla W_{r,h,\Lambda}|^2- \frac{1}{m^*}  \frac{\partial } {\partial h}   \displaystyle\int_{\mathbb{R}^N} K\Big(\frac{|y|}{{\mu_k}}\Big) W_{r,h,\Lambda}^{m^*},\qquad \text{if }m \text{ is even},
			&&\\[2mm]
   \space\\[2mm]
		\displaystyle \frac{1}{2}  \frac{\partial } {\partial h } \displaystyle\int_{\mathbb{R}^N}|\nabla W_{r,h,\Lambda}|^2- \frac{1}{m^*}  \frac{\partial } {\partial h}   \displaystyle\int_{\mathbb{R}^N} K\Big(\frac{|y|}{{\mu_k}}\Big) W_{r,h,\Lambda}^{m^*}.\qquad \text{if } m \text{ is odd}. \end{array}\right.		
	\end{eqnarray*}
Then,
			\begin{align}\label{express10}
				\frac{\partial I(W_{r,h,\Lambda})}{\partial h}
				\,=\,&  k \frac{\partial } {\partial h } \int_{\mathbb{R}^N} U_{x^{+}_{k,1,r}, \Lambda}^{m^*-1}\Big(\sum_{j=2}^kU_{x^{+}_{k,j,r}, \Lambda}\,+\,\sum_{i=1}^k  U_{x^{-}_{k,j,r}, \Lambda}\Big)
				\nonumber\\[2mm]
				\quad &- \int_{\mathbb{R}^N} K\Big(\frac{|y|}{{\mu_k}}\Big) W_{r,h,\Lambda}^{m^*-1}    \Big(   \overline{\mathbb{Z}}_{21}
				+ \sum_{j=2 }^k  \overline{\mathbb{Z}}_{2j}
				+ \sum_{j=1}^k   \underline{\mathbb{Z}}_{2j}   \Big).
			\end{align}
			From \eqref{express10}, similar to the  calculations in the proof of Proposition \ref{express9},  we can get
			\begin{align}   \label{frac h}
				\frac{\partial I(W_{r,h,\Lambda})}{\partial h} =  - k    \frac{\partial }{\partial h}  \int_{\R^N}   U_{x^{+}_{k,1,r}, \Lambda}^{m^*-1}   \Big(\sum_{j=2}^kU_{x^{+}_{k,j,r}, \Lambda}\,+\,\sum_{i=1}^k  U_{x^{-}_{k,j,r}, \Lambda}\Big)  +  k^2O  \Big(\Big(\frac{k}{{\mu_k}}\Big)^{N-\epsilon_0}\Big).
			\end{align}
			
			Then by some  tedious but straightforward analysis, we can get
			\begin{align}
				\frac{\partial I(W_{r,h,\Lambda})}{\partial h}
				\,=\,&  -\frac{k}{\Lambda^{N-2m}}\Big[\, (N-2m) \frac{B_4 k^{N-2m}}{ r^{N-2m}( \sqrt{1-h^2})^{N-2m+2}} h -(N-2m-1) \frac{B_5 k}{ r^{N-2m}h^{N-2m} \sqrt {1-h^2}}  \,\Big]	\nonumber\\[2mm]
				& \quad \quad +h \frac{B_5 k}{r^{N-2m} h^{N-2m-1}({1-h^2})^{\frac 32}}  \,\Big]  +   k^2O  \Big(\Big(\frac{k}{{\mu_k}}\Big)^{N-2m+2-\epsilon_0}\Big),
			\end{align}
			for some $\epsilon_0$ small enough.
			In fact, we  know that  $ k  \Big(\frac{k}{{\mu_k}}\Big)^{N-2m+2-\epsilon_0}$ and $  h \frac{B_5 k}{r^{N-2m} h^{N-2m-1}({1-h^2})^{\frac 32}} $  can be absorbed in   $O\Big(\frac{1}{k^{\big(\frac{l(N-2m)}{N-2m-l}+\frac{N-2m-1}{N-2m+1}+M_{m,N,l}-\epsilon_0\big)}}\Big)$, providing $ l $ satisfying \eqref{assumptionform}, $\epsilon_1$  and $ \epsilon_0 $ small enough.   Then  we can get  \eqref{express6} directly.
		\end{proof}
		
		\medskip
        Before we find a critical point of $F(r,h,\Lambda)$ in $D_k$, we clear and accurate expansions of $\frac{F(r,h,\Lambda)}{\partial \Lambda}$ and $ \frac{F(r,h,\Lambda)}{\partial h}$. Recall that
        \begin{equation*}
            {\lambda_k} = \frac{h_0}{k^{\frac{N-2m-1}{N-2m+1}}}, \quad \text{with}   ~h_0=  \Big[\frac{(N-2m-1) B_5}{(N-2m) B_4}\Big]^{\frac 1 {N-2m+1}},
        \end{equation*}
        we can first rewrite the expansion of energy functional at the point $\lambda_k$.
\medskip
	\begin{proposition} \label{A5}
				Suppose that  $ K(|y|)  $ satisfies  ${(\bf K)} $ and  $N\ge 2m+3 $, $(r,h,\Lambda) \in{{D}_k},\lambda_k$ is defined in \eqref{h_0}, and $D_k$ is defined in \eqref{Dk}. Then we have
				\begin{align}\label{FF}
			F(r,h,\Lambda)\,=\, &k  A_1 -k \Big[\frac{B_4}{\Lambda^{N-2m} k^{\frac{(N-2m)l}{N-2m-l}}}+\frac{B_6}{\Lambda^{N-2m} k^{\frac{(N-2m)l}{N-2m-l}+\frac{2(N-2m-1)}{N-2m+1}}}
			\nonumber\\[2mm]
			&+\frac{B_7}{\Lambda^{N-2m}  k^{\frac{(N-2m)l}{N-2m-l}+\frac{2(N-2m-1)}{N-2m+1}}} (1-{\lambda_k}^{-1} h)^2\Big]
			\nonumber\\[2mm]
			&+k \Big[\frac{A_2}{\Lambda^{l}  k^{\frac{(N-2m)l}{N-2m-l}}}+\frac{A_3}{\Lambda^{l-2}  k^{\frac{(N-2m)l}{N-2m-l}}}({\mu_k}-r)^2\Big] +k\frac{\mathcal {C}(r, \Lambda)}{k^{\frac{(N-2m)l}{N-2m-l}}} ({\mu_k}-r)^{3}  \nonumber
			\\[2mm]
			&+k O \Big(\frac{1}{k^{\frac{(N-2m)l}{N-2m-l}+\frac{2(N-2m-1)}{N-2m+1}}}\Big) (1-{\lambda_k}^{-1} h)^3+ k O\Big(\frac{1}{k^{\big(\frac{l(N-2m)}{N-2m-l}+\frac{2(N-2m-1)}{N-2m+1}+M_{m,N,l}-\epsilon_0\big)}}\Big),
		\end{align}
		\begin{align}\label{frach0}
			\frac{\partial F(r,h,\Lambda)}{\partial \Lambda}
			\,=\,&k  \Big[\frac{ (N-2m) B_4}{\Lambda^{N-2m+1} k^{\frac{(N-2m)l}{N-2m-l}}} - \frac{lA_2}{\Lambda^{l+1} k^{\frac{(N-2m)l}{N-2m-l}}} \Big]
			\nonumber\\[2mm]
			&+\frac{(l-2)A_3}{\Lambda^{l-1}  k^{\frac{(N-2m)l}{N-2m-l}}} ({\mu_k}-r)^2
			+kO \, \Big(\frac1{k^{\frac{(N-2m)l}{N-2m-l}}} ({\mu_k}-r)^{3}\Big)
		\end{align}
		and
		\begin{align} \label{frach1}
			\frac{\partial F(r,h, \Lambda)}{\partial h }
			&\,=\,     \frac{k}{\Lambda^{N-2m}}\Big[\,  \frac{2B_7}{\Lambda^{N-2m}  k^{\frac{(N-2m)l}{N-2m-l}+\frac{(N-2m-1)}{N-2m+1}}} (1-{\lambda_k}^{-1} h)  \,\Big]     \nonumber\\[2mm]
			& \quad
			+kO\Big(\frac{1}{k^{\big(\frac{l(N-2m)}{N-2m-l}+\frac{N-2m-1}{N-2m+1} \big)}}\Big) (1-{\lambda_k}^{-1} h)^2 +kO\Big(\frac{1}{k^{\big(\frac{l(N-2m)}{N-2m-l}+\frac{N-2m-1}{N-2m+1}+\sigma\big)}}\Big).
		\end{align}
			as $k \to \infty$, where the constants  $B_i, i= 4,5 $ and  $A_i, i=2,3 $ are  defined in Proposition \ref{func}, and
   \begin{equation}\label{B67}
       B_6:= \frac{(N-2m) B_4 {h_0}^2} 2+\frac{B_5}{{h_0}^{N-2m-1}}  , \quad  B_7:= \frac{(N-2m)} 2 \Big[B_4 {h_0}^2 +\frac{(N-2m-1)  B_5}{{h_0}^{N-2m-1}} \big ].
   \end{equation}
		\end{proposition}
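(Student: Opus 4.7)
The plan is to derive \eqref{FF} by substituting the expansion of $I(W_{r,h,\Lambda})$ from Proposition \ref{func} into the relation $F(r,h,\Lambda) = I(W_{r,h,\Lambda}) + kO(k^{-(l(N-2m)/(N-2m-l) + 2(N-2m-1)/(N-2m+1) + \sigma)})$ from Proposition \ref{pr0position2,6}, and then to Taylor-expand the resulting $h$-dependent expression around $h=\lambda_k$. Specifically, with $r$ near $\mu_k$, the bracketed $h$-dependent quantity is
\[
\Psi(h) \,:=\, \frac{B_4\, k^{N-2m}}{r^{N-2m}(1-h^2)^{(N-2m)/2}} \,+\, \frac{B_5\, k}{r^{N-2m}\, h^{N-2m-1}\sqrt{1-h^2}}.
\]
The crucial observation is that the choice $h_0 = \bigl[(N-2m-1)B_5/((N-2m)B_4)\bigr]^{1/(N-2m+1)}$ in \eqref{h_0} is precisely the one that forces $\Psi'(\lambda_k) = 0$ at leading order. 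I will verify this directly: writing $\tilde h = h/\lambda_k$ and expanding $(1-h^2)^{-\alpha} = 1 + \alpha h^2 + O(h^4)$ (valid since $\lambda_k \to 0$), one obtains
\[
\Psi(h) \,\sim\, \frac{B_4\, k^{N-2m}}{r^{N-2m}}\Bigl[1 + \tfrac{N-2m}{2}\lambda_k^2\tilde h^2\Bigr] + \frac{B_5\, k}{r^{N-2m}\lambda_k^{N-2m-1}\tilde h^{N-2m-1}}\Bigl[1 + \tfrac{\lambda_k^2\tilde h^2}{2}\Bigr].
\]
After substituting $r=\mu_k$ (using the explicit forms of $\mu_k$ and $\lambda_k$) every term falls onto one of the two natural scales $k^{-(N-2m)l/(N-2m-l)}$ or $k^{-(N-2m)l/(N-2m-l) - 2(N-2m-1)/(N-2m+1)}$.

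Next, setting $\xi = 1 - \lambda_k^{-1} h$ and expanding $\tilde h^2 = (1-\xi)^2$, $\tilde h^{-(N-2m-1)} = (1-\xi)^{-(N-2m-1)}$ to second order, the coefficient of $\xi^0$ assembles into $B_6 = \tfrac{(N-2m)B_4 h_0^2}{2} + B_5/h_0^{N-2m-1}$ (as in \eqref{B67}), the coefficient of $\xi^1$ equals $-(N-2m)B_4 h_0^2 + (N-2m-1)B_5/h_0^{N-2m-1}$, which vanishes exactly because $h_0^{N-2m+1}=(N-2m-1)B_5/((N-2m)B_4)$, and the coefficient of $\xi^2$ collects into $B_7 = \tfrac{N-2m}{2}\bigl[B_4 h_0^2 + (N-2m-1)B_5/h_0^{N-2m-1}\bigr]$. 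The $r$-dependence is unchanged from Proposition \ref{func}; replacing $r$ by $\mu_k(1 + O(k^{-\bar\theta}))$ in the remaining factors generates only lower-order corrections that can be absorbed into the stated error. Combining these pieces, together with the constant term $kA_1$ and the $\Lambda$-dependent correction $k[A_2\Lambda^{-l} + A_3\Lambda^{2-l}(\mu_k - r)^2]k^{-(N-2m)l/(N-2m-l)}$, yields \eqref{FF}.

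To obtain \eqref{frach0} I will differentiate \eqref{FF} term-by-term with respect to $\Lambda$: the $B_4,B_6,B_7$ pieces share the common factor $\Lambda^{-(N-2m)}$, producing the $(N-2m)B_4/\Lambda^{N-2m+1}$ contribution; the $A_2,A_3$ pieces yield the $-lA_2/\Lambda^{l+1}$ and $(l-2)A_3/\Lambda^{l-1}$ contributions. The $B_6$-term contributes at order $k^{-(N-2m)l/(N-2m-l) - 2(N-2m-1)/(N-2m+1) + 1}$, which in the range $(r,h,\Lambda)\in D_k$ is negligible compared with the displayed terms and is absorbed into the error. For \eqref{frach1}, I differentiate the $B_7(1-\lambda_k^{-1}h)^2$ term, obtaining a factor $-2\lambda_k^{-1}(1-\lambda_k^{-1}h)$; since $\lambda_k^{-1} = h_0^{-1}k^{(N-2m-1)/(N-2m+1)}$, one power of $k$ is released, producing the leading term $2B_7(1-\lambda_k^{-1}h)/\bigl(\Lambda^{N-2m}k^{(N-2m)l/(N-2m-l)+(N-2m-1)/(N-2m+1)}\bigr)$; the quadratic remainder in $\xi$ gives the $(1-\lambda_k^{-1}h)^2$ correction, and the $h$-derivative of the error terms supplies the final $O(k^{-((N-2m)l/(N-2m-l)+(N-2m-1)/(N-2m+1)+\sigma)})$ remainder.

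The one subtle point — and the only real technical step — is the cancellation of the linear-in-$\xi$ term above, which both fixes the value of $h_0$ and reduces the effective exponent on $k$ in $\partial_h F$ by $(N-2m-1)/(N-2m+1)$. I expect the rest to be a bookkeeping exercise once the natural change of variable $\xi = 1 - \lambda_k^{-1}h$ is adopted; the main care needed is keeping the several $k$-scales straight (the identity $1 - (N-2m)^2/(N-2m-l) + (N-2m-1)^2/(N-2m+1) = -(N-2m)l/(N-2m-l) - 2(N-2m-1)/(N-2m+1)$ is the arithmetic heart of the matter and guarantees that the $B_4$- and $B_5$-contributions live on the same $k$-scale after the Taylor expansion).
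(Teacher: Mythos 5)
Your proposal is correct and follows essentially the same route as the paper: substituting the expansion of $I(W_{r,h,\Lambda})$ from Proposition~\ref{func} into Proposition~\ref{pr0position2,6}, Taylor-expanding the $h$-dependent bracket $\mathcal{F}(h)=B_4k^{N-2m}(1-h^2)^{-(N-2m)/2}+B_5k\,h^{-(N-2m-1)}(1-h^2)^{-1/2}$ around $h=\lambda_k$, observing that the choice of $h_0$ in \eqref{h_0} annihilates the linear term, identifying the constant and quadratic coefficients as $B_6$ and $B_7$, and then differentiating term by term in $\Lambda$ and $h$. You make the cancellation of the linear-in-$\xi$ term and the scale-matching identity explicit, which the paper leaves partly implicit via the order estimate $\mathcal{F}'(\lambda_k)=O(k/\lambda_k^{N-2m-2})$; otherwise the arguments coincide.
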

\begin{proof}		

  Denote
  		\begin{align*}
			\mathcal {F}(h) &:=
			\frac{B_4 k^{N-2m}}{(\sqrt{1-h^2})^{N-2m}}\,+\, \frac{B_5 k}{h^{N-2m-1} \sqrt {1-h^2}},
		\end{align*}
		then
		\begin{align*}
			\mathcal {F} '(h)
			&=\Big[(N-2m)  B_4 k^{N-2m} h-(N-2m-1) \frac{B_5 k}{h^{N-2m}}\Big]+O\Big(\frac{k}{h^{N-2m-2}}\Big),
		\end{align*}
		and
		\begin{align}
			\mathcal {F} ''(h)
			\,=\,&(N-2m)  B_4 k^{N-2m}
			+(N-2m-1)(N-2m)\frac{B_5 k}{h^{N-2m+1}}\nonumber\\[2mm]
   &\qquad\qquad\qquad\qquad\qquad+O\big(h^2  k^{N-2m}\big)+O\Big(\frac{k}{h^{N-2m-1}}\Big),
		\end{align}
		\begin{equation}
			\mathcal {F} '''(h)\,=\,  O \Big(\frac{k}{h^{N-2m+2}}\Big).
		\end{equation}
		
		Expanse $\mathcal {F}(h)$ at the point $\lambda_k$, we have
		\begin{align}  \label{FFF1}
			\mathcal {F}(h)= &\mathcal {F}({\lambda_k})+\mathcal {F} '({\lambda_k}) (h-{\lambda_k})+  \frac{1}{2}  \mathcal {F} '' ({\lambda_k}) (h-{\lambda_k})^2+O\Big(\mathcal {F} ''' \big({\lambda_k}+(1-\nu)  h \big)\Big)(h-{\lambda_k})^3,
		\end{align}
		where
		\begin{align}\label{FFF2}
			\mathcal {F}({\lambda_k}) = B_4 k^{N-2m}  \Big[1+\frac{N-2m}{2} {\lambda_k}^2+O({\lambda_k}^4)\Big]
			+\frac{B_5 k}{{\lambda_k}^{N-2m-1}}  \Big[1+\frac{1} 2{\lambda_k}^2+O({\lambda_k}^4)\Big],
		\end{align}
  and
		\begin{align}\label{FFF3}
			\mathcal {F}'({\lambda_k})  = O\Big(\frac{k}{{\lambda_k}^{N-2m-2}}\Big)
		\end{align}
		\begin{align}\label{FFF4}
			\mathcal {F} '' ({\lambda_k}) =  \frac{(N-2m)} 2 \Big[B_4 k^{N-2m}+(N-2m-1)  \frac{B_5 k}{{\lambda_k}^{N-2m+1}} \Big] + O\big({\lambda_k}^2  k^{N-2m}\big).
		\end{align}
		Then combining \eqref{FFF1}-\eqref{FFF4},  we can get
		\begin{align*}
			\mathcal {F}(h)
			=&B_4 k^{N-2m}  +  \frac{(N-2m)B_4}{2}  k^{N-2m}{\lambda_k}^2  +\frac{B_5 k}{{\lambda_k}^{N-2m-1}}  + O\Big(\frac{k}{{\lambda_k}^{N-2m-2}}\Big) (h-{\lambda_k})  \nonumber\\[2mm]
			&
			+\frac{(N-2m)} 2 \Big[B_4 k^{N-2m}+(N-2m-1)  \frac{B_5 k}{{\lambda_k}^{N-2m+1}} \Big](h-{\lambda_k})^2
			+O\Big(\frac{k}{{\lambda_k}^{N-2m+2}}\Big) (h-{\lambda_k})^3\nonumber\\[2mm]
   =&B_4  k^{N-2m}+B_6  \frac{k^{N-2m}}{k^{\frac{2(N-2m-1)}{N-2m+1}}}
			+B_7 \frac{k^{N-2m}}{k^{\frac{2(N-2m-1)}{N-2m+1}}}
			(1-{\lambda_k}^{-1} h)^2 +O \Big(\frac{k^{N-2m}}{k^{\frac{2(N-2m-1)}{N-2m+1}}}\Big) (1-{\lambda_k}^{-1} h)^3.
		\end{align*}
		where $B_6$ and $B_7$ are defined in \eqref{B67}.
		
		\medskip
		Since $(r,h,\Lambda) \in{{D}_k}$, which indicates that
		\[r\in \Big[k^{\frac{N-2m}{N-2m-l}}-\frac{1}{k^{\bar \theta}}, \quad k^{\frac{N-2m}{N-2m-l}}+\frac{1}{k^{\bar \theta}}\Big],\]
		then
		\begin{align*}
			r^{N-2m}= k^{\frac{(N-2m)^2}{N-2m-l}} \Big(1+\frac{\mathcal {C}(r, \Lambda)}{{k^{\frac{(N-2m)}{N-2m-l}+\bar \theta}}}\Big).
		\end{align*}
		Thus,
		\begin{align*}
			& \frac{B_4 k^{N-2m}}{(r \sqrt{1-h^2})^{N-2m}}\,+\,\frac{B_5 k}{r^{N-2m} h^{N-2m-1}\sqrt {1-h^2}}
			\nonumber\\[2mm]
			&=\frac{B_4}{k^{\frac{(N-2m)l}{N-2m-l}}}+\frac{B_6}{k^{\frac{(N-2m)l}{N-2m-l}+\frac{2(N-2m-1)}{N-2m+1}}}+\frac{\mathcal {C}(r, \Lambda)}{k^{\frac{(N-2m)l}{N-2m-l}+M_{m,N,l}-\epsilon_0}}
			\nonumber\\[2mm]
			&\quad+\frac{B_7}{k^{\frac{(N-2m)l}{N-2m-l}+\frac{2(N-2m-1)}{N-2m+1}}} (1-{\lambda_k}^{-1} h)^2+O \Big(\frac{1}{k^{\frac{(N-2m)l}{N-2m-l}+\frac{2(N-2m-1)}{N-2m+1}}}\Big) (1-{\lambda_k}^{-1} h)^3.
		\end{align*}
		According to the expansion above, we can rewrite $ F(r,h, \Lambda)$ as
		\begin{align*}
			F(r,h,\Lambda)\,=\, &k  A_1 -k \Big[\frac{B_4}{\Lambda^{N-2m} k^{\frac{(N-2m)l}{N-2m-l}}}+\frac{B_6}{\Lambda^{N-2m} k^{\frac{(N-2m)l}{N-2m-l}+\frac{2(N-2m-1)}{N-2m+1}}}
			\nonumber\\[2mm]
			&+\frac{B_7}{\Lambda^{N-2m}  k^{\frac{(N-2m)l}{N-2m-l}+\frac{2(N-2m-1)}{N-2m+1}}} (1-{\lambda_k}^{-1} h)^2\Big]
			\nonumber\\[2mm]
			&+k \Big[\frac{A_2}{\Lambda^{l}  k^{\frac{(N-2m)l}{N-2m-l}}}+\frac{A_3}{\Lambda^{l-2}  k^{\frac{(N-2m)l}{N-2m-l}}}({\mu_k}-r)^2\Big] +k\frac{\mathcal {C}(r, \Lambda)}{k^{\frac{(N-2m)l}{N-2m-l}}} ({\mu_k}-r)^{3}  \nonumber
			\\[2mm]
			&+k O \Big(\frac{1}{k^{\frac{(N-2m)l}{N-2m-l}+\frac{2(N-2m-1)}{N-2m+1}}}\Big) (1-{\lambda_k}^{-1} h)^3+ k O\Big(\frac{1}{k^{\big(\frac{l(N-2m)}{N-2m-l}+\frac{2(N-2m-1)}{N-2m+1}+M_{m,N,l}-\epsilon_0\big)}}\Big),
		\end{align*}
		then we get \eqref{FF}.
		Similarly, we can easily get \eqref{frach0} and \eqref{frach1}.

  \end{proof}

		\section{Basic estimates and lemmas} \label{appendixB}
		\begin{lemma} \label{b.0}
			Suppose that  $(r,h,\Lambda) \in {{\mathscr D}_k} $, for  $ y\in \Omega_1^{+}  $  there exists a constant $C$ such that
			\begin{align}
				\Big(\sum_{j=2}^k  U_{x^{+}_{k,j,r}, \Lambda}+\sum_{j=1}^k U_{x^{-}_{k,j,r}, \Lambda}\Big)
				\le  C  \frac{1}{\big(1+|y-x^{+}_{k,1,r}|\big)^{N-2m-\alpha_1}}\Big(\frac{k}{{\mu_k}}\Big)^{\alpha_1},
			\end{align}
   and
			\begin{align}
				\Big(  \sum_{j=2 }^k  \overline{\mathbb{Z}}_{2j}  +   \sum_{j=1 }^k   \underline{\mathbb{Z}}_{2j}   \Big)
				\le  C  \frac{\mu_k}{\big(1+|y-x^{+}_{k,1,r}|\big)^{N-2m+1-\alpha_2}}\Big(\frac{k}{{\mu_k}}\Big)^{\alpha_2},
			\end{align}
			with  $ \alpha_1=(1, N-2m), \alpha_2=(1, N-2m+1)  $.
		\end{lemma}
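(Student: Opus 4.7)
The plan is to exploit the cell decomposition $\{\Omega_j^\pm\}$ and reduce each far–field bubble to its worst–case distance, so that the sums collapse to convergent $p$–series in $j$. I first use the pointwise bound $U_{x^{+}_{k,j,r},\Lambda}(y) \le C(1+|y-x^{+}_{k,j,r}|)^{-(N-2m)}$ and split the denominator as
\[
(1+|y-x^{+}_{k,j,r}|)^{N-2m} = (1+|y-x^{+}_{k,j,r}|)^{\alpha_1}\cdot(1+|y-x^{+}_{k,j,r}|)^{N-2m-\alpha_1}.
\]
For $y\in\Omega_1^+$ and $j\neq 1$, the key geometric fact is the dichotomy: either $|y-x^{+}_{k,1,r}|\le \tfrac{1}{2}|x^{+}_{k,1,r}-x^{+}_{k,j,r}|$, in which case a triangle inequality gives $|y-x^{+}_{k,j,r}|\ge\tfrac{1}{2}|x^{+}_{k,1,r}-x^{+}_{k,j,r}|$; or the opposite holds, in which case the angular constraint defining $\Omega_1$ gives $|y-x^{+}_{k,j,r}|\ge|y-x^{+}_{k,1,r}|\ge\tfrac{1}{2}|x^{+}_{k,1,r}-x^{+}_{k,j,r}|$. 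In both branches one obtains simultaneously $|y-x^{+}_{k,j,r}|\gtrsim|x^{+}_{k,1,r}-x^{+}_{k,j,r}|$ and $|y-x^{+}_{k,j,r}|\gtrsim|y-x^{+}_{k,1,r}|$.

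Substituting into the split bound yields
\[
U_{x^{+}_{k,j,r},\Lambda}(y) \le \frac{C}{(1+|y-x^{+}_{k,1,r}|)^{N-2m-\alpha_1}}\cdot\frac{1}{|x^{+}_{k,1,r}-x^{+}_{k,j,r}|^{\alpha_1}}.
\]
Summing over $j\ge 2$ and using $|x^{+}_{k,1,r}-x^{+}_{k,j,r}| = 2r\sqrt{1-h^2}\sin(\pi(j-1)/k)\gtrsim r\sqrt{1-h^2}\,(j-1)/k$ reduces the tail to $\sum_{j\ge 2}(j-1)^{-\alpha_1}$, which converges since $\alpha_1>1$. Recalling that $(r,h,\Lambda)\in\mathscr{D}_k$ forces $r\sqrt{1-h^2}\sim\mu_k$, this produces the claimed factor $(k/\mu_k)^{\alpha_1}$.

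For the sum over the lower tower $x^{-}_{k,j,r}$ the template is identical. When $j=1$, the sign constraint $y_3\ge 0$ together with $(x^{-}_{k,1,r})_3=-rh<0$ gives immediately $|y-x^{-}_{k,1,r}|\ge|y-x^{+}_{k,1,r}|$, so that contribution is absorbed into the bound for $j=1$. For $j\ge 2$ the same dichotomy works verbatim, and $|x^{+}_{k,1,r}-x^{-}_{k,j,r}|$ is of order $|x^{+}_{k,1,r}-x^{+}_{k,j,r}|+2rh$, which can only help. The bound on $\sum\overline{\mathbb{Z}}_{2j}+\sum\underline{\mathbb{Z}}_{2j}$ proceeds along the same lines, starting from the sharper decay recorded in \eqref{estimatekernel}, namely $|\overline{\mathbb{Z}}_{2j}|,|\underline{\mathbb{Z}}_{2j}|\le C\mu_k(1+|y-x^{\pm}_{k,j,r}|)^{-(N-2m+1)}$; splitting now with exponent $\alpha_2\in(1,N-2m+1)$ and summing yields the factor $\mu_k(k/\mu_k)^{\alpha_2}(1+|y-x^{+}_{k,1,r}|)^{-(N-2m+1-\alpha_2)}$.

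The only non–routine point is the dichotomy giving $|y-x^{+}_{k,j,r}|\gtrsim\max\{|y-x^{+}_{k,1,r}|,|x^{+}_{k,1,r}-x^{+}_{k,j,r}|\}$ on $\Omega_1^+$; once this is in hand, both estimates follow from a single convergent series. The restriction $\alpha_1<N-2m$ (resp.\ $\alpha_2<N-2m+1$) is precisely what keeps the exponent on $(1+|y-x^{+}_{k,1,r}|)$ positive so the residual factor makes sense, while $\alpha_1,\alpha_2>1$ is what makes the $j$–series converge, so the hypothesis on the range of $\alpha_1,\alpha_2$ is used sharply at both ends.
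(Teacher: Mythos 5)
Your argument for the upper tower and for the lower tower with $j\ge 2$ matches the paper's (dichotomy plus splitting the exponent), and your observation that $|x^{+}_{k,1,r}-x^{-}_{k,j,r}|\gtrsim |x^{+}_{k,1,r}-x^{+}_{k,j,r}|$ is correct. However, there is a real gap in your treatment of the single term $j=1$ of the lower tower. You only record the inequality $|y-x^{-}_{k,1,r}|\ge |y-x^{+}_{k,1,r}|$, which yields
\[
U_{x^{-}_{k,1,r},\Lambda}(y)\le \frac{C}{(1+|y-x^{+}_{k,1,r}|)^{N-2m}},
\]
and this does \emph{not} imply the claimed bound. Indeed, near $y=x^{+}_{k,1,r}$ your estimate is of order $1$, whereas the target
$C\,(1+|y-x^{+}_{k,1,r}|)^{-(N-2m-\alpha_1)}(k/\mu_k)^{\alpha_1}$ is of order $(k/\mu_k)^{\alpha_1}\to 0$; no amount of ``absorbing into the bound for $j=1$'' can close this, because the sum being estimated starts at $j=2$ and has no $j=1$ upper-tower term to absorb into.

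The missing ingredient, which the paper does use, is a \emph{uniform} lower bound on $|y-x^{-}_{k,1,r}|$: since $y\in\Omega_1^{+}$ has $y_3\ge 0$ and $(x^{-}_{k,1,r})_3=-rh$, one has $|y-x^{-}_{k,1,r}|\ge y_3+rh\ge rh$. Because $h\sim h_0 k^{-(N-2m-1)/(N-2m+1)}$ with exponent strictly less than $1$, one has $rh\gg r/k\sim \mu_k/k$. Splitting $U_{x^{-}_{k,1,r},\Lambda}$ using both $|y-x^{-}_{k,1,r}|\ge |y-x^{+}_{k,1,r}|$ (for the $N-2m-\alpha_1$ power) and $|y-x^{-}_{k,1,r}|\ge rh\gtrsim \mu_k/k$ (for the $\alpha_1$ power) then gives
\[
U_{x^{-}_{k,1,r},\Lambda}(y)\le \frac{C}{(1+|y-x^{+}_{k,1,r}|)^{N-2m-\alpha_1}}\Big(\frac{k}{\mu_k}\Big)^{\alpha_1},
\]
which is exactly what is needed. (Equivalently, you could have applied your own dichotomy to $j=1$ of the lower tower, noting $|x^{+}_{k,1,r}-x^{-}_{k,1,r}|=2rh\gg \mu_k/k$.) With this one correction your proof is sound and follows the paper's route; the same point must also be patched in your bound for $\underline{\mathbb{Z}}_{21}$ in the second estimate.
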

		\begin{proof}
 For  $ y\in \Omega_1^{+}  $ and $ j= 2, \cdots, k  $, we can easily obtain that
    \begin{equation*}
    |y-x^{+}_{k,j,r}|\ge\frac 14|x^{+}_{k,1,r}-x^{+}_{k,j,r}|,\qquad |y-x^{-}_{k,j,r}| \ge \frac 14|x^{+}_{k,1,r}-x^{-}_{k,1,r}|\ge C\Big(\frac {r} k\Big).
    \end{equation*}

Then
\begin{align*}
 \Big(\sum_{j=2}^k  U_{x^{+}_{k,j,r}, \Lambda}&+\sum_{j=1}^k U_{x^{-}_{k,j,r}, \Lambda}\Big)\\
 &\le\frac{C}{\big(1+|y-x^{+}_{k,1,r}|\big)^{N-2m-\alpha_1}} \Big[\sum_{j=2}^k  \frac{1}{\big(1+|y-x^{+}_{k,j,r}|\big)^{\alpha_1}}+\frac{1}{\big(1+|y-x^{-}_{k,1,r}|\big)^{\alpha_1}}\Big]
\\[2mm]
&\le \frac{C}{\big(1+|y-x^{+}_{k,1,r}|\big)^{N-2m-\alpha_1}}  \Big[\sum_{j=2}^k \frac{1}{|x^{+}_{k,1,r}-x^{+}_{k,j,r}|^{\alpha_1}}+\frac{1}{|x^{+}_{k,1,r}-x^{-}_{k,1,r}|^{\alpha_1}}\Big]
\\[2mm]
&\le \frac{C}{\big(1+|y-x^{+}_{k,1,r}|\big)^{N-2m-\alpha_1}} \Big(\frac{k}{\mu_k}\Big)^{\alpha_1} .
\end{align*}
   Similarly,
   \begin{align*}
 \Big(  \sum_{j=2 }^k  \overline{\mathbb{Z}}_{2j}  &+   \sum_{j=1 }^k   \underline{\mathbb{Z}}_{2j}   \Big)\\
 &\le\frac{Cr}{\big(1+|y-x^{+}_{k,1,r}|\big)^{N-2m+1-\alpha_2}} \Big[\sum_{j=2}^k  \frac{1}{\big(1+|y-x^{+}_{k,j,r}|\big)^{\alpha_2}}+\frac{1}{\big(1+|y-x^{-}_{k,1,r}|\big)^{\alpha_2}}\Big]
\\[2mm]
&\le C  \frac{\mu_k}{\big(1+|y-x^{+}_{k,1,r}|\big)^{N-2m+1-\alpha_2}}\Big(\frac{k}{{\mu_k}}\Big)^{\alpha_2} .
\end{align*}

		\end{proof}
		\medskip

			\begin{lemma}   \label{B22}
			Suppose that  $N\ge 2m+3 $ and $m$ satisfies \eqref{assumptionform}.  We have
			\begin{align}  \label{eee}
				\frac{\mu_k}{k^{(\frac l{N-2m-l})(
						N+2m-2\frac{N-2m-l}{N-2m}- 2\epsilon_1 )}} \le     \frac{C}{k^{\big(\frac{l(N-2m)}{N-2m-l}+\frac{N-2m-1}{N-2m+1}+M_{m,N,l}-\epsilon_0\big)}},
			\end{align}
			provided with   $ \epsilon_0$ and $ \epsilon_1$ small enough.
		\end{lemma}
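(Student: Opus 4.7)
The claim is a purely algebraic comparison of $k$-exponents on the two sides of \eqref{eee}. My plan is to substitute $\mu_k = k^{(N-2m)/(N-2m-l)}$, take $k$-logarithms, and reduce \eqref{eee} to an inequality in $l, N, m$ alone, which can then be verified using the structure of $M_{m,N,l}$ and the defining equation for $l_{N,m}$.

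Writing $s := N-2m$ for brevity, after cancelling the common piece $\frac{ls}{s-l}$ on the two exponents and using $l(s+4m) - ls = 4lm$, I expect the lemma to collapse to the algebraic inequality
\[
M_{m,N,l} \;\le\; \frac{4lm - s}{s-l} \,-\, \frac{2l}{s} \,-\, \frac{s-1}{s+1} \,+\, \epsilon_0 \,-\, \frac{2l\epsilon_1}{s-l}.
\]
The critical observation is that the first two terms on the right coincide \emph{exactly} with the second entry of the minimum defining $M_{m,N,l}$, which I denote $T_2$. So the target reads $M_{m,N,l} \le T_2 - (s-1)/(s+1)$ up to small $\epsilon$-corrections; and since $M_{m,N,l} = \min\{T_1, T_2, T_3, T_4\}$, it will suffice to exhibit one of $T_1, T_3, T_4$ meeting this bound.

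I would then split into cases by $m$. For $m \ge 2$, the fourth entry $T_4$ meets the bound immediately from the trivial inequality $(5-4m)s \le 2l$ (after clearing denominators, using $s(s-l)>0$). For $m=1$ the assumption $l > l_{N,m}$ becomes essential: a short calculation rewrites the square-root formula for $l_{N,m}$ as the positive root of the quadratic $2(s+1)l^2 + s(3s+1)l - 2s^3 = 0$, which encodes precisely the transition where one of $T_3$ or $T_4$ (or a short combination of them with the $\epsilon_0$-slack) crosses below $T_2 - (s-1)/(s+1)$. The parameter $\epsilon_0$ is then chosen depending on $l, N, m$ to absorb any residual algebraic gap, and $\epsilon_1$ is taken correspondingly smaller so that the negative contribution $-\frac{2l\epsilon_1}{s-l}$ is dominated by $\epsilon_0$.

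The main obstacle will be the delicate case analysis for $m=1$ near $l \approx l_{N,m}$: identifying which of $T_3$ or $T_4$ is the binding bound in each subrange of $(l_{N,m}, N-2m]$ and verifying the corresponding polynomial comparison with the quadratic defining $l_{N,m}$. Throughout, positivity of $s-l$ (from $l < N-2m$) preserves inequality directions during cross-multiplication, and positivity of $l$ (from $l\ge 2$) keeps the entries of $M_{m,N,l}$ well-defined.
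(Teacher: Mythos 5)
Your reduction of \eqref{eee} to an algebraic comparison of exponents is exactly the right move and your arithmetic is correct: substituting $\mu_k=k^{s/(s-l)}$ with $s=N-2m$, using $N+2m=s+4m$, and cancelling the common $\frac{ls}{s-l}$ gives precisely
\[
M_{m,N,l}\;\le\;\Big(\frac{4lm-s}{s-l}-\frac{2l}{s}\Big)-\frac{s-1}{s+1}+\epsilon_0-\frac{2l\epsilon_1}{s-l},
\]
and the bracketed quantity is indeed $T_2$, the second entry of the minimum in \eqref{sigma}. Your quadratic $2(s+1)l^2+s(3s+1)l-2s^3=0$ is also exactly the one whose positive root is $l_{N,m}$ (the discriminant simplifies to $s^2(25s^2+22s+1)$), and, less obviously, it is \emph{identical} to the paper's inequality \eqref{iii} at equality: writing $T_2^{(m=1)}=-6+\frac{3s}{s-l}+\frac{2(s-l)}{s}$ shows \eqref{iii} is exactly $T_2^{(m=1)}>T_3$. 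That is the paper's entire reduction.

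Where you diverge from the paper, and where the gap appears, is in what target inequality you set yourself. You aim for $M_{m,N,l}\le T_2-T_3$ directly, which is strictly stronger than needed: since $\epsilon_0$ may be taken arbitrarily close to $M_{m,N,l}$ (it only has to satisfy $0<\epsilon_0<M_{m,N,l}$ elsewhere in the paper), the requirement collapses to $T_2-T_3>0$, i.e.\ to \eqref{iii} alone. Because $T_2^{(m)}-T_2^{(1)}=\frac{4l(m-1)}{s-l}\ge 0$, the $m=1$ inequality $T_2^{(1)}>T_3$ already implies the general one, so no case split on $m$ is required; the paper obtains a single $m$-independent inequality equivalent to $l>l_{N,m}$. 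Your stronger target $M\le T_2-T_3$ genuinely fails for $m=1$ when $l$ is slightly above $l_{N,m}$: e.g.\ $s=5$, $l=2.4$ gives $T_2-T_3\approx 0.15$ while $M_{m,N,l}=T_4\approx 0.18$. Your hedge that "$\epsilon_0$ absorbs any residual gap'' is the right instinct, but then the quadratic you exhibit is \emph{not} the locus where $T_3$ or $T_4$ crosses below $T_2-T_3$; it is the locus $T_2=T_3$, which is precisely why the $\epsilon_0$-absorption (rather than a crossing of the other $T_i$'s) is the mechanism. The $m\ge 2$ shortcut via $T_4$ ($s(5-4m)\le 2l$) is correct and does avoid the $\epsilon_0$-dependence in that regime, which is a small improvement over the paper; but if you adopt the paper's weaker target $T_2>T_3$ you can dispense with the case split entirely.
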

		\begin{proof}     It's easy to show that
			\begin{align*}
				\frac{\mu_k}{k^{(\frac {2l(N-2m)}{N-2m-l}) }} \le     \frac{C}{k^{\big(\frac{l(N-2m)}{N-2m-l}+\frac{N-2m-1}{N-2m+1}+M_{m,N,l}-\epsilon_0\big)}},
			\end{align*}
			for $l$ satisfying \eqref{assumptionform}.  In order to get \eqref{eee}, we just need to show			
			\begin{align}   \label{eeee}
				\frac {\mu_k} {k^{(\frac l{N-2m-l})(
						N+2m-2\frac{N-2m-l}{N-2m}- 2\epsilon_1 )}}  =    \frac{k^{\frac{N-2m}{N-2m-l} }}{k^{(\frac l{N-2m-l})(
						N+2m-2\frac{N-2m-l}{N-2m}- 2\epsilon_1 )}}\le     \frac{C}{k^{\big(\frac{l(N-2m)}{N-2m-l}+\frac{N-2m-1}{N-2m+1}+M_{m,N,l}-\epsilon_0\big)}},
			\end{align}      for some $\epsilon_0$ and $ \epsilon_1$ small. The problem to show \eqref{eeee} can be reduced to  show  that
   \begin{equation}\label{iii}
       6+   \frac{N-2m-1}{N-2m+1}  <  3\bigg(\frac{N-2m} {N-2m-l} \bigg) + 2\bigg(\frac{N-2m-l}{N-2m}\bigg).
   \end{equation}
   For $l$ satisfying  \eqref{assumptionform}, \eqref{iii} is naturally established. This fact concludes the proof.
		\end{proof}
  \medskip

		Assuming that $x_m$ and $x_n$ are two different fixed points in the set $\{x^{\xi}_{k,j,r}: j=1,\cdots,k\}$, where $\xi \in \{+,-\}$. We consider the following
		function
		\begin{equation*}
			g_{mn}(y)=\frac{1}{(1+|y-x_{n}|)^{\gamma_{1}}}\frac{1}{(1+|y-x_{m}|)^{\gamma_{2}}},\quad m \neq n,
		\end{equation*}
		where  $\gamma_{1}\geq 1 $ and  $\gamma_{2}\geq 1 $ are two constants.
		\begin{lemma}\label{lemb1}(Lemma B.1, \cite{Wei2})
			For any constants  $0<\upsilon\leq \min\{\gamma_{1},\gamma_{2}\} $, there is a constant  $C>0 $, such that
			$$
			g_{mn}(y)\leq \frac{C}{|x_{m}-x_{n}|^{\upsilon}}\Big(\frac{1}{(1+|y-x_{m}|)^{\gamma_{1}+\gamma_{2}-\upsilon}}+\frac{1}{(1+|y-x_{n}|)^{\gamma_{1}+\gamma_{2}-\upsilon}}\Big).
			$$
		\end{lemma}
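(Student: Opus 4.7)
The plan is to obtain this pointwise weighted Hölder-type estimate by a direct triangle inequality argument, splitting the ambient space into two symmetric regions depending on which of the two centers $x_m,x_n$ is closer to the point $y$. The key observation is that the triangle inequality $|x_m-x_n|\leq |y-x_m|+|y-x_n|$ forces at least one of the quantities $|y-x_m|,|y-x_n|$ to be bounded below by $\tfrac12|x_m-x_n|$, so a factor of $|x_m-x_n|^{-\upsilon}$ can always be extracted from whichever of the two weights corresponds to the larger distance.

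More concretely, I would first consider the region $A=\{y:|y-x_m|\leq |y-x_n|\}$. On $A$ the triangle inequality gives $|y-x_n|\geq \tfrac12|x_m-x_n|$, hence $1+|y-x_n|\geq c|x_m-x_n|$ for a universal constant $c>0$ (after absorbing the constant case $|x_m-x_n|$ small into $C$). Writing
\begin{equation*}
\frac{1}{(1+|y-x_n|)^{\gamma_1}}=\frac{1}{(1+|y-x_n|)^{\upsilon}}\cdot\frac{1}{(1+|y-x_n|)^{\gamma_1-\upsilon}},
\end{equation*}
the first factor is bounded by $C|x_m-x_n|^{-\upsilon}$ (this is where $\upsilon\leq \gamma_1$ is used to keep $\gamma_1-\upsilon\geq 0$), while the second factor, together with $(1+|y-x_m|)^{-\gamma_2}$, can be controlled using $1+|y-x_m|\leq 1+|y-x_n|$ (valid on $A$) by $(1+|y-x_m|)^{-(\gamma_1+\gamma_2-\upsilon)}$. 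This yields the $(1+|y-x_m|)^{-(\gamma_1+\gamma_2-\upsilon)}$ branch of the desired bound on $A$.

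The symmetric region $B=\{y:|y-x_n|<|y-x_m|\}$ is handled in the exact same way with the roles of $x_m,x_n$ and of the exponents $\gamma_1,\gamma_2$ swapped, using now $\upsilon\leq\gamma_2$; this produces the $(1+|y-x_n|)^{-(\gamma_1+\gamma_2-\upsilon)}$ branch. Taking the sum of the two branches covers both regions and gives the claimed inequality with a universal constant $C$ independent of $m,n$ and $y$.

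I do not anticipate any genuine obstacle: the only subtlety is bookkeeping the constraint $\upsilon\leq\min\{\gamma_1,\gamma_2\}$ so that the exponents $\gamma_i-\upsilon$ that appear after the splitting remain nonnegative (otherwise the monotonicity step $(1+|y-x_m|)\leq(1+|y-x_n|)$ on $A$ would be used in the wrong direction). Everything else is triangle inequality and monotonicity of $t\mapsto(1+t)^{-\alpha}$ for $\alpha\geq 0$.
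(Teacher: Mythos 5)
Your proof is correct. The paper does not reproduce a proof of this lemma; it simply cites Lemma B.1 of Wei--Yan \cite{Wei2}. Your argument --- splitting $\mathbb{R}^N$ into the two regions $|y-x_m|\le|y-x_n|$ and its complement, extracting the factor $|x_m-x_n|^{-\upsilon}$ from the larger distance via $1+|y-x_n|\ge 1+\tfrac12|x_m-x_n|\ge\tfrac12|x_m-x_n|$, and then transferring the remaining exponent $\gamma_i-\upsilon\ge0$ onto the smaller distance by monotonicity --- is exactly the standard argument used in that reference, so there is nothing to compare against and nothing to fix.
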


		\begin{lemma}\label{lemb2}(Lemma 2.2, \cite{guo2})
			For any constant  $0<\beta <N-2m $, there is a constant  $C>0 $, such
			that
			$$
			\int_{\R^N} \frac{1}{|y-z|^{N-2m}}\frac{1}{(1+|z|)^{2m+\beta}}{\mathrm d}z \leq
			\frac{C}{(1+|y|)^{\beta}}.
			$$
		\end{lemma}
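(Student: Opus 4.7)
The plan is to prove the convolution estimate by a standard region decomposition. First I would dispose of the easy case $|y|\le 1$, where $(1+|y|)^{-\beta}$ is bounded below by a constant, so it suffices to show the left-hand side is uniformly bounded. This follows because near $z=y$ the singularity $|y-z|^{-(N-2m)}$ is locally integrable (since $N-2m<N$), while at infinity the integrand decays like $|z|^{-(N+\beta)}$, which is integrable thanks to $\beta>0$.

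For $|y|\ge 1$, I would decompose $\mathbb{R}^N = A_1\cup A_2\cup A_3$ with
\[
A_1=\{|z|\le |y|/2\},\qquad A_2=\{|y-z|\le |y|/2\},\qquad A_3=\mathbb{R}^N\setminus(A_1\cup A_2).
\]
On $A_1$, use $|y-z|\ge |y|/2$ to pull the factor $|y|^{-(N-2m)}$ out of the integral; what remains is $\int_{|z|\le |y|/2}(1+|z|)^{-(2m+\beta)}\,\mathrm d z$, which grows like $|y|^{N-2m-\beta}$ precisely because the upper bound $\beta<N-2m$ makes the exponent $2m+\beta$ less than $N$. The product is $C\,|y|^{-\beta}$, as desired.

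On $A_2$, use $|z|\ge |y|/2$ to pull out $|y|^{-(2m+\beta)}$ and integrate the remaining kernel $|y-z|^{-(N-2m)}$ over a ball of radius $|y|/2$, which contributes a factor comparable to $|y|^{2m}$; again the product is $C\,|y|^{-\beta}$. On $A_3$, both $|z|$ and $|y-z|$ are comparable to $|z|\ge |y|/2$, so the integrand is bounded by a constant times $|z|^{-(N+\beta)}$ and direct integration gives $C\,|y|^{-\beta}$. Combining the three regions and the bounded-$|y|$ case yields the estimate.

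There is no real obstacle here, only careful bookkeeping to confirm that each of the two endpoint conditions on $\beta$ is used exactly once: $\beta>0$ provides convergence at infinity in region $A_3$, while $\beta<N-2m$ produces the correct power of $|y|$ in region $A_1$ (without it, the inner integral would be bounded and the argument would only give the weaker estimate $C\,|y|^{-(N-2m)}$). Since this is a well-known Riesz-type convolution bound, I would expect the original paper to simply cite it rather than reprove it.
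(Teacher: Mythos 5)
Your proof is correct, and the paper in fact offers no proof at all: the lemma is stated with the attribution ``(Lemma 2.2, \cite{guo2})'' and simply cited, exactly as you predicted in your closing remark. Your three-region decomposition with $A_1=\{|z|\le|y|/2\}$, $A_2=\{|y-z|\le|y|/2\}$, $A_3$ the remainder is the standard argument for this Riesz-type convolution bound; each step checks out, including the comparability $|y-z|\sim|z|$ on $A_3$ (which holds because both $|z|$ and $|y-z|$ exceed $|y|/2$ there, forcing them within bounded ratio of each other) and your observation of exactly where $\beta>0$ and $\beta<N-2m$ enter.
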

		
		Using Lemma \ref{lemb2}, we can have the following estimation:
		\begin{lemma}\label{laa3}
			
			Suppose that  $N\ge 2m+3 $ and  $\tau\in(0, 2), y=(y_1, \cdots, y_N)  $. Then there is a small
			$\sigma>0 $, such that
			when  $ y_3 \geq 0  $,
			\[
			\begin{split}
				&\int_{\R^N}\frac1{|y-z|^{N-2m}} W_{r,h,\Lambda}^{\frac{4m}{N-2m}}(z)\sum_{j=1}^k\frac1{(1+|z-x^{+}_{k,j,r}|)^{\frac{N-2m}{2}+\tau}}\, {\mathrm d}z \\[2mm]
				& \le C\sum_{j=1}^k\frac1{(1+|y-x^{+}_{k,j,r}|)^{\frac{N-2m}{2}+\tau+\sigma}},
			\end{split}
			\]
			and when  $  y_3 \le 0 $,
			\[
			\begin{split}
				&\int_{\R^N}\frac1{|y-z|^{N-2m}} W_{r,h,\Lambda}^{\frac{4m}{N-2m}}(z)\sum_{j=1}^k\frac1{(1+|z-x^{-}_{k,j,r}|)^{\frac{N-2m}{2}+\tau}}\, {\mathrm d}z \\[2mm]
				&\le C\sum_{j=1}^k\frac1{(1+|y-x^{-}_{k,j,r}|)^{\frac{N-2m}{2}+\tau+\sigma}}.
			\end{split}
			\]
			
		\end{lemma}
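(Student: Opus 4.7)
The plan is to reduce the convolution integral to a controllable double sum via the Riesz-potential estimates collected in Lemmas \ref{lemb1} and \ref{lemb2}. I will treat only the case $y_3\ge 0$; the case $y_3\le 0$ follows by the obvious reflection. The first step would be to establish the pointwise bound
\[
W_{r,h,\Lambda}^{\frac{4m}{N-2m}}(z)\le C\sum_{i=1}^{k}\Big[(1+|z-x^{+}_{k,i,r}|)^{-4m}+(1+|z-x^{-}_{k,i,r}|)^{-4m}\Big],
\]
which rests on the decay $U_{x,\Lambda}(z)\le C(1+|z-x|)^{-(N-2m)}$. When $N\ge 6m$ this is immediate from sub-additivity of $t\mapsto t^{4m/(N-2m)}$; for smaller $N$, one localizes to the regions $\Omega_\ell^{\pm}$ where a single bubble dominates $W_{r,h,\Lambda}$ and the bound becomes essentially trivial.

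With this reduction in hand, the integral is controlled by a double sum over terms of the form
\[
I_{i,j}^{\xi}(y)=\int_{\R^N}\frac{1}{|y-z|^{N-2m}}\,\frac{1}{(1+|z-x^{\xi}_{k,i,r}|)^{4m}}\,\frac{1}{(1+|z-x^{+}_{k,j,r}|)^{(N-2m)/2+\tau}}\,dz,\qquad \xi\in\{+,-\}.
\]
For the diagonal contribution $(\xi,i)=(+,j)$ the two bubble weights concentrate at the same point and combine into $(1+|z-x^{+}_{k,j,r}|)^{-[4m+(N-2m)/2+\tau]}$; then Lemma \ref{lemb2}, applied either with an admissible exponent less than $N-2m$ (producing decay $(1+|y-x^{+}_{k,j,r}|)^{-[(N-2m)/2+\tau+\sigma_0]}$ for some $\sigma_0>0$) or else with the stronger automatic bound $(1+|y-x^{+}_{k,j,r}|)^{-(N-2m)}$, yields the required decay at $x^{+}_{k,j,r}$. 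For the off-diagonal terms I would apply Lemma \ref{lemb1} to the product of the two bubble weights with a parameter $\upsilon$ chosen to satisfy simultaneously $\upsilon>1$, $\upsilon\le\min\{4m,(N-2m)/2+\tau\}$, and $(N+2m)/2+\tau-\upsilon\in(0,N-2m)$, which splits the mass between the two centers at the cost of a factor $|x^{\xi}_{k,i,r}-x^{+}_{k,j,r}|^{-\upsilon}$; a second application of Lemma \ref{lemb2} then produces the bound
\[
I_{i,j}^{\xi}(y)\le\frac{C}{|x^{\xi}_{k,i,r}-x^{+}_{k,j,r}|^{\upsilon}}\Big[(1+|y-x^{\xi}_{k,i,r}|)^{-[(N-2m)/2+\tau+\sigma_0]}+(1+|y-x^{+}_{k,j,r}|)^{-[(N-2m)/2+\tau+\sigma_0]}\Big].
\]

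Summing over $i$ for fixed $j$, the factors $|x^{\xi}_{k,i,r}-x^{+}_{k,j,r}|^{-\upsilon}$ give a series that converges uniformly (since $\upsilon>1$ and the centers are separated by at least $\sim r/k$), so the total is dominated by $C\sum_{j,\xi}(1+|y-x^{\xi}_{k,j,r}|)^{-[(N-2m)/2+\tau+\sigma]}$ for some $\sigma\in(0,\sigma_0)$. To remove the $\xi=-$ contribution when $y_3\ge 0$, I would use the elementary identity $(y_3+rh)^2-(y_3-rh)^2=4y_3\,rh\ge 0$, which gives $|y-x^{-}_{k,j,r}|\ge|y-x^{+}_{k,j,r}|$ for every $j$; hence the lower-hemisphere terms are absorbed into the upper-hemisphere ones, producing the one-sided bound of the lemma.

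The hard part will be threading the parameter $\upsilon$ through the narrow window described above while keeping the quantitative gain $\sigma>0$ uniform in $k$. This is where the hypothesis $\tau\in(0,2)$ together with $N\ge 2m+3$ is used in an essential way: both the Lemma \ref{lemb1} admissibility range and the $(0,N-2m)$ range required by Lemma \ref{lemb2} must intersect, and the size of that intersection determines $\sigma$. Preserving an explicit $\sigma>0$ matters because this gain is what drives the contraction estimate underlying Lemma \ref{lem2.1}.
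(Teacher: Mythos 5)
The proposal's first reduction is the problem. You claim the pointwise bound
\[
W_{r,h,\Lambda}^{\frac{4m}{N-2m}}(z)\le C\sum_{i=1}^{k}\Big[(1+|z-x^{+}_{k,i,r}|)^{-4m}+(1+|z-x^{-}_{k,i,r}|)^{-4m}\Big],
\]
and justify the case $\frac{4m}{N-2m}>1$ (i.e.\ $2m+3\le N<6m$) by saying that in each $\Omega_\ell^{\pm}$ a single bubble dominates so ``the bound becomes essentially trivial.'' That justification fails, and so does the bound itself. Writing $p=\frac{4m}{N-2m}$, take $z$ far from the entire ring of centers (for example $|z|\gg\mu_k$, or $z=0$ at the center of the ring): then $|z-x^{\xi}_{k,j,r}|\sim|z|$ (resp.\ $\sim\mu_k$) for \emph{every} $(j,\xi)$, so $W_{r,h,\Lambda}(z)\sim 2k(1+|z-x^{+}_{k,\ell,r}|)^{-(N-2m)}$, and the ratio of the left side to the right side is $\sim (2k)^{p-1}\to\infty$ whenever $p>1$. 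In other words, in $\Omega_\ell^{+}$ the sum of the other $2k-1$ bubbles is \emph{not} bounded by a $k$-independent multiple of $U_{x^{+}_{k,\ell,r}}$; Lemma~\ref{b.0} only gives the weaker estimate $W_{r,h,\Lambda}(z)\le U_{x^{+}_{k,\ell,r}}(z)+C(k/\mu_k)^{\alpha_1}(1+|z-x^{+}_{k,\ell,r}|)^{-(N-2m-\alpha_1)}$ with $\alpha_1\in(1,N-2m)$, which has a \emph{slower}-decaying tail with a \emph{small} prefactor, not a uniform domination by a single bubble. Your reduction to the double sum $\sum_{i,j,\xi}I^{\xi}_{i,j}$ therefore does not go through in the regime $N<6m$ that the lemma is required to cover.

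The route actually intended (and what Lemma~2.3 of \cite{guo2} does) is to keep both pieces of the Lemma~\ref{b.0} decomposition after raising to the power $p$: in $\Omega_\ell^{\pm}$ one uses $W^{p}\le C\,U_{x^{\pm}_{k,\ell,r}}^{p}+C(k/\mu_k)^{p\alpha_1}(1+|z-x^{\pm}_{k,\ell,r}|)^{-(4m-p\alpha_1)}$, applies Lemmas~\ref{lemb1}--\ref{lemb2} to each piece separately, and lets the factor $(k/\mu_k)^{p\alpha_1}\to 0$ (together with the inter-bubble separation $\sim\mu_k/k\to\infty$) compensate for the loss of decay $4m\mapsto 4m-p\alpha_1$ on the second piece; the interplay between $\alpha_1$ and $\tau$ is precisely what produces the strict gain $\sigma>0$. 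The rest of your scheme is sound: the reflection $|y-x^{-}_{k,j,r}|\ge|y-x^{+}_{k,j,r}|$ for $y_3\ge 0$ is correct, the off-diagonal use of Lemma~\ref{lemb1} followed by Lemma~\ref{lemb2} is the right mechanism, and the summation in $i$ with $\upsilon>1$ does converge (giving in fact a vanishing $(k/\mu_k)^{\upsilon}$ factor). A secondary caveat: in the diagonal term the combined exponent $4m+\frac{N-2m}{2}+\tau$ exceeds $N$ whenever $N\le 6m+2\tau$, so Lemma~\ref{lemb2} as stated does not apply and you must invoke (and briefly justify) the fact that the convolution then decays at the rate $(1+|y-x^{+}_{k,j,r}|)^{-(N-2m)}$; this is enough to beat $\frac{N-2m}{2}+\tau$ only if $\tau<\frac{N-2m}{2}$, which holds for the $\tau$ the paper actually uses (see \eqref{assumptionform} ff.) but not for every $\tau\in(0,2)$ when $N-2m=3$. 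Neither of these secondary points is fatal, but the pointwise bound at the start is, and the proof needs to be redone along the region-by-region lines sketched above.
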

		
		\begin{proof}

   The proof of Lemma \ref{laa3} is similar to Lemma 2.3  in \cite{guo2}.  We omit it here.
		\end{proof}

		\medskip

\section*{Data availability}
No data was used for the research described in the article.		
		\medskip

  \newpage


\begin{thebibliography}{99}
			
			
			\bibitem{aap}
			A. Ambrosetti, J. Garcia-Azorero and I. Peral, Perturbation of $-\Delta u - u^{\frac{N+2}{N-2}} = 0$, the scalar curvature problem in $\mathbb{R}^{N}$ and related topic, \textit{J. Funct. Anal.} \textbf{165} (1999), no. 1, 117-149.




\bibitem{bw}
T. Bartsch, M. Schneider and T. Weth, Multiple solutions of a critical polyharmonic equation, \textit{J. Reine Angew. Math.} \textbf{571} (2004), 131-143.

\bibitem{bw2}
T. Bartsch, T. Weth and M. Millem, A Sobolev inequality with remainder term and critical equations on domains with topology for the polyharmonic operator, \textit{Calc. Var. Partial Differential Equations} \textbf{18} (2003), no. 3, 253-268.

\bibitem{bianchi}
G. Bianchi, Non-existence and symmetry of solutions to the scalar curvature equation, \textit{Comm. Partial
Differential Equations} \textbf{21} (1996), no. 1-2, 229-234.

\bibitem{Branson}
T. Branson, Group representations arising from Lorentz conformal geometry, \textit{J. Funct. Anal.} \textbf{74} (1987), no. 2, 199-291.



\bibitem{cgs}
L. Caffarelli, B. Gidas and J. Spruck, Asymptotic symmetry and local behavior of semilinear elliptic equations with critical Sobolev growth, \textit{Comm. Pure Appl. Math.} \textbf{42} (1989), no. 3, 271-297.


\bibitem{Chang1}
S. Chang and P. Yang, A perturbation result in prescribing scalar curvature on $\mathbb{S}^{n}$, \textit{Duke Math. J.} \textbf{64} (1991), no. 1, 27-69.


\bibitem{chang0}
S. Chang and P. Yang, Partial differential equations related to the Gauss-Bonnet-Chern integrand on 4-manifolds, Conformal, Riemannian and Lagrangian geometry, \textit{Univ. Lecture Ser.}  \textbf{27} (2002), 1-30.


\bibitem{duan-musso-wei}
L. Duan, M. Musso and S. Wei, Doubling the equatorial for the prescribed scalar curvature problem on $\mathbb{S}^{N}$. \textit{NoDEA Nonlinear Differential Equations Appl.} \textbf{30} (2023), no. 3, Paper No. 40, 46 pp.

\bibitem{efj}
D. Edmunds, D. Fortunato and E. Jannelli, Critical exponents, critical dimensions and biharmonic operator, \textit{Arch. Ration. Mech. Anal.} \textbf{112} (1990), no. 3, 269-289.

\bibitem{ggs}
F. Gazzola, H. Grunau and M. Squassina, Existence and nonexistence results for critical growth biharmonic elliptic equations, \textit{Calc. Var. Partial Differential Equations} \textbf{18} (2003), no. 2, 117-143.





\bibitem{gs2}
H. Grunau and G. Sweers, Positivity for equations involving polyharmonic operators with Dirichlet boundary conditions, \textit{Math. Ann.} \textbf{307} (1997), no. 4, 588-626.

\bibitem{gs}
H. Grunau and G. Sweers, The maximum principle and positive principle eigenfunctions for polyharmonic equations, \textit{Lecture Notes in Pure and Appl. Math.} \textbf{194}, Dekker, New York (1998), 163-183.


\bibitem{Guo and Hu}
Y. Guo and Y. Hu:  Non-degeneracy of bubble solutions for higher order prescribed curvature problem. \textit{Adv. Nonlinear Stud.} \textbf{22} (2022), no. 1, 15-40.

\bibitem{guo2}
Y. Guo and B. Li, Infinitely many solutions for the prescribed curvature problem of polyharmonic operator, \textit{Calc. Var. Partial Differential Equations} \textbf{46} (2013), no. 3-4, 809-836.


\bibitem{guo4}
Y. Guo, J. Liu and Y. Zhang,  Nonexistence of positive solutions for polyharmonic systems in $\mathbb{R}^{N}$, \textit{Adv. Nonlinear Stud.} \textbf{7} (2007), no. 3, 381-402.

\bibitem{guo5}
Y. Guo and T. Liu, Large energy bubble solutions for Schr\"{o}dinger equation with supercritical growth, \textit{Adv. Nonlinear Stud.} \textbf{21} (2021), no. 2, 421-445.

\bibitem{Gyxnjj}
			Y. Guo and J. Nie, Infinitely many non-radial solutions for the prescribed curvature problem of fractional operator, \textit{Discrete Contin, Dyn. Syst.} \textbf{36} (2016), 6873-6898.





\bibitem{Li1}
Y. Li, On $-\Delta u = K(x)u^{5}$ in $\mathbb{R}^{3}$, \textit{Comm. Pure Appl. Math.} \textbf{46} (1993), no. 3, 303-340.

\bibitem{LWX}
Y. Li, J. Wei and H. Xu, Multi-bump solutions of $-\Delta u=K(x)u^{\frac{n+2}{n-2}}$ on lattices in $\mathbb{R}^n$, \textit{J. Reine Angew. Math.} \textbf{743} (2018), 163-211.

\bibitem{Lin1}
C. Lin, On Liouville theorem and a priori estimates for the scalar curvature equations, \textit{Ann. Scuola Norm. Sup. Pisa CI. Sci.(4)} \textbf{27} (1998), no. 1, 107-130.

\bibitem{Lin2}
C. Lin and S. Lin, Positive radial solutions for $\Delta u + K(x)u^{\frac{n+2}{n-2}} = 0$ in $\mathbb{R}^{n}$ and related topics, \textit{Appl. Anal.} \textbf{38} (1990), no. 3, 121-159.

\bibitem{LPY-16-DCDS}
			W. Long, S. Peng, J. Yang, Infinitely many positive and sign-changing scalar field equations, \textit{Discrete Contin. Dyn. Syst.} \textbf{36} (2016), 917-939.

\bibitem{pan}
S. Paneitz, A quartic conformally covariant differential operator for arbitrary pseudo-Riemannian manifolds (summary), \textit{SIGMA Symmetry Integrability Geom. Methods Appl.} \textbf{4} (2008), Paper 036, 3.

\bibitem{Pucci1}
P. Pucci and J. Serrin, A general variational identity, \textit{Indiana Univ. Math. J.} \textbf{35} (1986), no. 3, 681-703.

\bibitem{Pucci2}
P. Pucci and J. Serrin, Critical exponents and critical dimensions for polyharmonic operators, \textit{J. Math. Pures Appl.(9)}  \textbf{69} (1990), no. 1, 55-83.




\bibitem{Swanson}
C. Swanson, The best Sobolev constant, \textit{Appl. Anal.} \textbf{47} (1992), no. 4, 227-239.










\bibitem{Wei1}
J. Wei and X. Xu, Classification of solutions of higher order conformally invariant equations, \textit{Math. Ann.} \textbf{313} (1999), no. 2, 207-228.

\bibitem{Wei2}
J. Wei and S. Yan, Infinitely many solutions for the prescribed scalar curvature problem on $\mathbb{S}^{N}$, \textit{J. Funct. Anal.} \textbf{258} (2010), no. 9, 3048-3081.

\bibitem{Yan}
S. Yan, Concentration of solutions for the scalar curvature equation on $\mathbb{R}^{N}$, \textit{J. Differential Equations} \textbf{163} (2000), no. 2, 239-264.

			
			
		\end{thebibliography}
	\end{document}